\newtheorem{thm}{Theorem}
\newtheorem{prop}{Proposition}
\newtheorem{lem}{Lemma}
\newtheorem{cor}{Corollary}
\newtheorem{rem}{Remark}
\newtheorem{ex}{Example}
\newtheorem{defn}{Definition}
\renewcommand{\labelenumi}{\rm(\roman{enumi})}
\newcommand{\N}{\mathbb{N}}
\newcommand{\Z}{\mathbb{Z}}
\newcommand{\Q}{\mathbb{Q}}
\newcommand{\R}{\mathbb{R}}
\newcommand{\C}{\mathbb{C}}
\newcommand{\Qbar}{\overline{\mathbb{Q}}}
\newcommand{\Cbar}{\overline{C}}
\newcommand{\z}{\bm{z}}
\newcommand{\ba}{\bm{\alpha}}
\newcommand{\bg}{\bm{\gamma}}
\newcommand{\f}{\bm{f}}
\newcommand{\g}{\bm{g}}
\newcommand{\bb}{\bm{b}}
\newcommand{\e}{\bm{e}}
\newcommand{\bt}{\bm{t}}
\newcommand{\bl}{\bm{\lambda}}
\newcommand{\m}{\bm{\mu}}
\newcommand{\La}{\mathcal{L}}
\newcommand{\M}{\mathcal{M}}
\newcommand{\Eta}{\mathcal{H}}
\newcommand{\Nu}{\mathcal{N}}
\newcommand{\X}{\mathcal{X}}
\newcommand{\relmiddle}[1]{\mathrel{}\middle#1\mathrel{}}
\newcommand{\house}[1]{\begin{array}{|c|}\hline #1\end{array}}
\DeclareMathOperator*{\ord}{ord}
\DeclareMathOperator*{\den}{den}
\DeclareMathOperator*{\ind}{index}
\def\hsymb#1{\mbox{\strut\rlap{\smash{\Huge$#1$}}\quad}}
\def\diag{\mathop{\rm diag}\nolimits}
\begin{document}

\title{Algebraic independence of certain entire functions of two variables generated by linear recurrences}
\author{Haruki Ide}
\date{}
\maketitle

\begin{abstract}
In this paper we construct an entire function of two variables having the property that its values and its partial derivatives of any order at any distinct algebraic points are algebraically independent. Such an entire function is generated by a linear recurrence. In order to prove this result, we reduce the algebraic independency to that of Mahler functions of several variables by shifting the linear recurrence and apply the theory of Mahler functions. 
\end{abstract} 

\section{Introduction and the results}\label{sec:1}
In transcendental number theory, various authors have investigated necessary and sufficient conditions for the values of analytic functions at algebraic numbers to be algebraically independent. The earliest such result is the famous Lindemann-Weierstrass theorem, which asserts that the values $e^{\alpha_1},\ldots,e^{\alpha_n}$ of the exponential function at algebraic numbers $\alpha_1,\ldots,\alpha_n$ are algebraically independent if and only if $\alpha_1,\ldots,\alpha_n$ are linearly independent over the rationals (cf. Shidlovskii \cite{Shid}).

Some complex or $p$-adic entire functions are known to have the notable property that their values and their derivatives of any order at any nonzero distinct algebraic numbers are algebraically independent. As  the first such result, Nishioka established Theorem \ref{thm:factorial} below. Before stating the theorem, we introduce some notation used throughout this paper.

Let $p$ be $\infty$ or a prime number. Let $|\cdot|_p$ denote the usual absolute value or the standarized $p$-adic absolute value of the field $\Q$ of rational numbers according respectively as $p$ is $\infty$ or a prime number. We denote by $\Q_p$ the completion of $\Q$ with respect to $|\cdot|_p$, by $\Qbar_p$ the algebraic closure of $\Q_p$, and by $\C_p$ the completion of $\Qbar_p$. Note that $\Q_\infty$ is the field $\R$ of real numbers, and that $\Qbar_\infty$ and $\C_\infty$ are the field $\C$ of complex numbers. We also denote by $|\cdot|_p$ the absolute value of $\C_p$. Let $\Qbar$ denote the field of algebraic numbers, that is, the algebraic closure of $\Q$ in $\C$. We denote by $\Qbar^\times$ the set of nonzero algebraic numbers. For each prime number $p$, we fix an embedding of $\Qbar$ into $\C_p$. We denote by $f^{(l)}(x)$ the derivative of $f(x)$ of order $l$.

\begin{thm}[Nishioka \cite{N1986}]\label{thm:factorial}
Let $p$ be $\infty$ or a prime number. Let $a$ be an algebraic number with $0<|a|_p<1$. Define $f(x)=\sum_{k=0}^\infty a^{k!}x^k$. Then the infinite subset $\{f^{(l)}(\alpha)\mid\alpha\in\Qbar^\times,\ l\geq0\}$ of $\Qbar_p$ is algebraically independent over $\Q$.
\end{thm}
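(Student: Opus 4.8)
The plan is to apply Mahler's method for functions of several variables, preceded by two routine reductions. First, since the assertion concerns a countable set, it suffices to fix finitely many distinct points $\alpha_1,\ldots,\alpha_s\in\Qbar^\times$ and a bound $L$ and to prove that the finite family $\{f^{(l)}(\alpha_j)\mid 1\le j\le s,\ 0\le l\le L\}$ is algebraically independent over $\Q$. Second, I would trade the ordinary derivatives for the operator $\theta=x\,\frac{d}{dx}$: because $x^l f^{(l)}$ is a triangular, invertible integer-coefficient combination of $\theta^0 f,\ldots,\theta^l f$ (Stirling numbers), and each $\alpha_j\neq 0$, algebraic independence of the $f^{(l)}(\alpha_j)$ over $\Q$ is equivalent to that of the numbers $(\theta^l f)(\alpha_j)=\sum_{k\ge 0}k^l a^{k!}\alpha_j^{\,k}$.

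The central difficulty, and the point where the paper's strategy enters, is that $f(x)=\sum a^{k!}x^k$ is not a one-variable Mahler function: under $x\mapsto x^d$ the factorial exponents do not close up into a functional equation. The way around this is to exploit the recurrence $a^{(k+1)!}=(a^{k!})^{k+1}$ satisfied by the coefficient sequence and to \emph{shift} it, introducing auxiliary variables so that the position-dependent exponent $k+1$ is carried by a variable whose transformation is linear. This should turn the nonlinear one-variable relation into a genuine Mahler functional equation $F(\Omega\,\z)=R(\z,F(\z))$ for a function $F(z_1,\ldots,z_n)$ of several variables under a monomial transformation $z_i\mapsto\prod_j z_j^{\omega_{ij}}$, with the target values $(\theta^l f)(\alpha_j)$ recovered as specializations of $F$ and its partial derivatives at an algebraic point $\ba$ whose coordinates are monomials in $a$ and the $\alpha_j$. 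Derivatives cause no extra trouble: since the Jacobian of the transformation is monomial, differentiating $F$ produces functions satisfying a Mahler system for the same $\Omega$, so the whole jet up to order $L$ is governed by one and the same method.

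With such a functional equation in hand, the plan is to verify two things. (a) The relevant functions ($F$ together with the derivatives that encode the $\theta^l f$ at the several points) are algebraically independent over the rational function field $\Qbar(z_1,\ldots,z_n)$; and (b) the point $\ba$ is admissible for $\Omega$, i.e.\ its forward orbit $\Omega^k\ba$ converges $p$-adically into the domain of convergence and avoids the singularities and degeneracies of $R$ and of the functions. Here the hypothesis $0<|a|_p<1$ and the nonvanishing of the $\alpha_j$ feed directly into the convergence and orbit conditions of (b), while the distinctness of the $\alpha_j$ is what keeps the contributions of different points separated. I expect (a) to be the main obstacle, and it is where the super-geometric, factorial growth of the exponents is indispensable: any hypothetical polynomial relation among the functions would demand exact cancellation among monomials weighted by $a^{k!}$, but the gaps $k!-(k-1)!$ tend to infinity, so the leading monomials cannot be matched; this lacunarity argument should exclude every nontrivial relation, and the delicate part is to run it uniformly across all $s$ points and all $L$ derivative orders simultaneously.

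Finally, once (a) and (b) are established, I would invoke the transfer theorem of Mahler's method in several variables — the algebraic-independence refinement (via Kubota's lemma) that promotes algebraic independence of the functions over $\Qbar(z_1,\ldots,z_n)$ to algebraic independence of their values at the admissible point $\ba$. Unwinding the specialization and the $\theta$-to-derivative change of basis then yields the algebraic independence of $\{f^{(l)}(\alpha_j)\}$ over $\Q$, and letting $s$ and $L$ grow gives the full statement. In short, the functional-equation construction of the second paragraph and the function-field independence of step (a) are the two places where real work is required; the rest is the standard machinery assumed available.
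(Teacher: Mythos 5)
Your overall scaffolding (reduce to finitely many points $\alpha_1,\ldots,\alpha_s$ and orders $l\le L$, trade $d/dx$ for $\theta=x\,d/dx$) is fine, but the core of your plan --- manufacturing a several-variable Mahler functional equation for $f(x)=\sum_k a^{k!}x^k$ out of the relation $a^{(k+1)!}=(a^{k!})^{k+1}$ --- cannot work, and this is precisely why the paper itself describes the situation of Theorem \ref{thm:factorial} as ``essentially different'' from that of Theorem \ref{thm:geometric}. In Mahler's method the transformation must be one \emph{fixed} integer matrix $\Omega$ iterated $k$ times: every transfer theorem you could invoke (Kubota's, Nishioka's, or Theorem \ref{thm:criterion} of this paper) assumes conditions {\rm(I)}--{\rm(IV)} for a single $\Omega$, and all the quantitative steps (the index bound $c_1(N+1)^{1+1/n}$ in Proposition \ref{prop:Auxiliary}, the estimates of Propositions \ref{prop:UB} and \ref{prop:LB}) are driven by the spectral radius $\rho$ of that fixed matrix. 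Consequently any exponent sequence reachable by this machinery has the form $M(\Omega^k\ba)=\prod_i\alpha_i^{e_i(k)}$ with $e_i(k)=O(\rho^k)$, i.e.\ at most geometric growth; this is exactly the class covered by the linear recurrences \eqref{eq:LRS} with $R_k=c\rho^k+o(\rho^k)$ of Remark \ref{rem:Rk}. Since $(k!)^{1/k}\to\infty$, the factorial exponents eventually outrun $C\rho^k$ for every fixed $\rho$, so no choice of auxiliary variables, monomial $M(\z)$, fixed matrix $\Omega$, and algebraic point $\ba$ can produce $a^{k!}$ along an orbit. Your proposed fix --- an ``auxiliary variable carrying the exponent $k+1$'' --- would force the transformation matrix to depend on $k$, which is exactly what the method forbids; with that, the objects needed in your steps (a) and (b), and the final transfer step, simply do not exist.

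For the record, the paper does not prove Theorem \ref{thm:factorial} at all: it is quoted from Nishioka \cite{N1986}, whose argument is approximation-theoretic rather than functional-equation-theoretic. The truncations $\sum_{k\le K}a^{k!}\alpha^k$ are algebraic numbers of height controlled by roughly $\|a\|^{K!}$, while the tails have $p$-adic (or archimedean) size about $(|a|_p^{K!})^{K+1}$, so each value admits algebraic approximations whose quality improves faster than any fixed power of the height; an algebraic-independence criterion built on such super-good simultaneous approximations --- this is where the factorial gaps $(K+1)!/K!\to\infty$ are genuinely used --- then gives the independence of all the $f^{(l)}(\alpha_j)$, in both the complex and the $p$-adic settings. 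So your instinct that factorial growth is the engine of the proof is correct, but it enters through height and approximation estimates, not through a lacunarity argument over the function field inside Mahler's machinery.
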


First we consider the case where $p$ is $\infty$. Fix an algebraic number $a$ with $0<|a|_\infty<1$ in what follows. Nishioka also proved the following

\begin{thm}[Nishioka \cite{N1996}]\label{thm:geometric}
Let $d$ be an integer greater than $1$. Define $g(x)=\sum_{k=0}^\infty a^{d^k}x^k$. Then the infinite subset $\{g^{(l)}(\alpha)\mid\alpha\in\Qbar^\times,\ l\geq0\}$ of $\C$ is algebraically independent over $\Q$.
\end{thm}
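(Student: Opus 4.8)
The plan is to derive the statement from Mahler's method after reducing the two apparent obstructions — the derivatives of every order and the infinitude of points — to a single \emph{linear} Mahler system in one variable. Since algebraic independence of an infinite set means algebraic independence of each of its finite subsets, I would fix distinct $\alpha_1,\dots,\alpha_m\in\Qbar^\times$ and an integer $L\ge0$, and prove that the finitely many numbers $g^{(l)}(\alpha_i)$ with $0\le l\le L$, $1\le i\le m$, are algebraically independent over $\Q$.

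First I would lift $g$ to the two-variable function $\Phi(y,x)=\sum_{k\ge0}y^{d^k}x^k$, so that $g(x)=\Phi(a,x)$, and record the functional equation obtained by peeling off the $k=0$ term and reindexing:
\[
\Phi(y,x)=y+x\,\Phi(y^d,x).
\]
To absorb the derivatives I would pass to the Euler operator $\vartheta=x\,\partial_x$ and use the operator identity $\vartheta\circ(x\,\cdot)=x\,(1+\vartheta)$. Writing $h_{i,l}(y):=(\vartheta^l\Phi)(y,x)\big|_{x=\alpha_i}=\sum_{k}y^{d^k}k^l\alpha_i^k$, the numbers $h_{i,l}(a)$ and $\alpha_i^l g^{(l)}(\alpha_i)=\sum_k a^{d^k}(k)_l\,\alpha_i^k$ are related, for each fixed $i$, by the invertible integer-triangular change of basis between $\{k^l\}_l$ and the falling factorials $\{(k)_l\}_l$ (Stirling numbers). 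Hence algebraic independence of the $g^{(l)}(\alpha_i)$ over $\Q$ is equivalent to that of the values $h_{i,l}(a)$.

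Next I would apply $\vartheta^l$ to the functional equation and specialize $x=\alpha_i$, which yields the linear Mahler system in the single variable $y$,
\[
h_{i,l}(y)=\delta_{l0}\,y+\alpha_i\sum_{j=0}^{l}\binom{l}{j}h_{i,j}(y^d),
\]
that is $H(y)=A\,H(y^d)+b(y)$, with $A$ a constant invertible matrix over $\Qbar$ (block lower-triangular in $(l,j)$, diagonal entries $\alpha_i$, so $\det A=\prod_i\alpha_i^{\,L+1}\neq0$) and $b$ a polynomial vector, under the transformation $y\mapsto y^d$ that contracts $a$ to $0$ since $0<|a|_\infty<1$. Crucially, this fixes $x=\alpha_i$ as a constant and thereby avoids the flat direction of the naive two-variable map $(y,x)\mapsto(y^d,x)$, whose eigenvalue $1$ would block the standard machinery. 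I would then invoke the theorem on algebraic independence of values of Mahler functions, which reduces the goal to (i) algebraic independence of the functions $h_{i,l}$ over $\C(y)$, together with (ii) a regularity condition at $y=a$ that is immediate here because $A$ is invertible, each $h_{i,l}$ is holomorphic on $|y|<1$ (hence at every $a^{d^n}$), and $a$ avoids the finite exceptional set.

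The hard part will be (i): proving that the $h_{i,l}$ are algebraically independent over $\C(y)$. Here I would exploit the lacunarity of the exponent set $\{d^k\}$ together with the linear independence over $\C$ of the coefficient families $k\mapsto k^l\alpha_i^k$ for distinct $\alpha_i$ and varying $l$ (a Vandermonde/Wronskian fact, using $\alpha_i\ne0$ and $\alpha_i$ pairwise distinct). In any candidate polynomial relation, the monomials in the $h_{i,l}$ produce power series whose exponents are sums $d^{k_1}+\dots+d^{k_r}$; the rapid growth of $d^k$ forces these to separate by total degree and by the multiset of frequencies, so that comparing coefficients collapses the relation to the linear independence of the coefficient families, forcing triviality. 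I expect controlling the cancellations among products of such lacunary series to be the principal technical obstacle, and I would either carry out this gap analysis directly or derive (i) from a general criterion guaranteeing that linearly independent solutions of an irreducible linear Mahler system are algebraically independent.
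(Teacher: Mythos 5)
Your reduction is, in skeleton, exactly the one the paper attributes to Nishioka and reproduces in general form in Sections \ref{sec:3} and \ref{sec:4}: lift $g$ to a two-variable function satisfying $g(x;z)=xg(x;z^d)+z$, differentiate in the parameter $x$, specialize $x=\alpha_i$ to obtain a lower-triangular constant-coefficient Mahler system in the single variable $y$ under $y\mapsto y^d$ (your Euler-operator/Stirling bookkeeping is a harmless variant of applying $\partial^l/\partial x^l$ directly, which is what the paper does in Section \ref{sec:4}), and then feed this system into a criterion converting algebraic independence of the functions over $\C(y)$ into algebraic independence of their values at $y=a$. Up to that point your steps are correct, and the auxiliary hypotheses (the conditions {\rm(I)}, {\rm(II)}, {\rm(III)}$_p$, {\rm(IV)}$_p$ of Section \ref{sec:3.1}) are indeed trivial for the $1\times1$ matrix $\Omega=(d)$ and the point $a$.

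The genuine gap is your step (i), which is precisely where the content of the theorem lives, and neither of your proposed routes closes it. The direct ``gap analysis'' is much harder than you suggest: a polynomial relation over $\C(y)$ has rational-function coefficients, so after clearing denominators you must compare coefficients at exponents of the form $d^{k_1}+\cdots+d^{k_r}+j$ with $j$ ranging over a bounded but nontrivial set, and distinct multisets $\{k_1,\ldots,k_r\}$ of different cardinalities can collide (already $d^{k}+d^{k}=d^{k+1}$ when $d=2$); no clean separation ``by the multiset of frequencies'' is available, which is exactly why Mahler-method proofs do not proceed this way. Your fallback --- a general criterion that linearly independent solutions of an irreducible linear Mahler system are algebraically independent --- does not exist in that form. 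What does exist (Kubota, Nishioka; Theorem \ref{thm:indep_functions} of the paper) says instead: if the $h_{i,l}$ are algebraically dependent over $\C(y)$, then, since the diagonal entries $\alpha_i$ of the distinct blocks are distinct, the subset in case (i) is a singleton, so some single first-component function $h_{i,0}(y)=\sum_{k\geq0}\alpha_i^ky^{d^k}$ must itself be rational and satisfy
\[
R(y)=\alpha_i R(y^d)+c\,y,\qquad c\neq0 .
\]
The missing step is therefore the non-existence of rational solutions of this scalar equation, equivalently the irrationality of $h_{i,0}$; this is a concrete lemma one must prove (for instance: the power series solution of the equation is lacunary, hence has a natural boundary on its circle of convergence by the Hadamard gap theorem, hence cannot be rational; or by counting poles/degrees through the functional equation). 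Without identifying this reduction and proving the irrationality lemma, your proposal does not yield the theorem.
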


Theorem \ref{thm:geometric} was proved by using the fact that the function $g(x;z)=\sum_{k=0}^\infty x^kz^{d^k}$ satisfies the functional equation
\begin{equation}\label{eq:g(x;z)}
g(x;z)=xg(x;z^d)+z,
\end{equation}
which is essentially different from the situation of Theorem \ref{thm:factorial}. Mahler functions are analytic functions satisfying the functional equations such as \eqref{eq:g(x;z)} or those of more general forms. In order to prove Theorem \ref{thm:geometric} above, Nishioka \cite{N1996} established a criterion for the algebraic independence of the values of Mahler functions. Mahler's method, which treats the algebraic independence of the values of Mahler functions, has further applications as follows.

Let $\{R_k\}_{k\geq0}$ be a linear recurrence of nonnegative integers satisfying
\begin{equation}\label{eq:LRS}
R_{k+n}=c_1R_{k+n-1}+\cdots+c_nR_k\quad (k\geq 0),
\end{equation}
where $n\geq2$, $R_0,\ldots,R_{n-1}$ are not all zero, and $c_1,\ldots, c_n$ are nonnegative integers with $c_n\neq0$. We define a polynomial associated with \eqref{eq:LRS} by
\begin{equation}\label{eq:Phi(X)}
\Phi(X):=X^n-c_1X^{n-1}-\cdots-c_n.
\end{equation}
Define 
\begin{equation}\label{eq:F}
F(x):=\sum_{k=0}^\infty a^{R_k}x^k.
\end{equation}
The following theorem was proved by applying Nishioka's criterion.

\begin{thm}[Tanaka \cite{T1994}]\label{thm:F}
Suppose that $\Phi(\pm1)\neq0$ and that the ratio of any pair of distinct roots of $\Phi(X)$ is not a root of unity. Then the infinite subset $\{F^{(l)}(\alpha)\mid\alpha\in\Qbar^\times,\ l\geq0\}$ of $\C$ is algebraically independent over $\Q$.
\end{thm}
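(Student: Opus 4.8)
The plan is to mimic the strategy used for Theorem \ref{thm:geometric}, namely to introduce an auxiliary Mahler function of several variables and reduce the desired algebraic independence to a known criterion for Mahler functions. Concretely, I would set up the two-variable (or multivariable) function
\begin{equation}\label{eq:Fmulti}
F(x;z_1,\ldots,z_n):=\sum_{k=0}^\infty x^k z_1^{e_{1,k}}\cdots z_n^{e_{n,k}},
\end{equation}
where the exponent vectors $(e_{1,k},\ldots,e_{n,k})$ are chosen so that specializing each $z_i$ to an appropriate power of $a$ (governed by the roots of $\Phi(X)$) recovers $F(x)=\sum_k a^{R_k}x^k$. The key point is that the linear recurrence \eqref{eq:LRS} lets us express $R_k$ via the companion matrix of $\Phi(X)$, so that the shifted sequence $R_{k+1}$ is obtained from $R_k$ by a fixed linear (monomial) transformation of the $z$-variables. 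This is exactly what turns \eqref{eq:Fmulti} into a Mahler function: iterating the recurrence corresponds to applying a linear transformation to the exponent lattice, yielding a functional equation of Mahler type analogous to \eqref{eq:g(x;z)}.

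First I would make the matrix underlying the transformation explicit. Writing $\bm{R}_k=(R_k,R_{k+1},\ldots,R_{k+n-1})^{\mathsf T}$, the recurrence \eqref{eq:LRS} gives $\bm{R}_{k+1}=A\bm{R}_k$ for the companion matrix $A$ of $\Phi(X)$, whose entries are nonnegative integers. Accordingly I would define the transformation on the variables $\z=(z_1,\ldots,z_n)$ by $\Omega(\z)=(\prod_j z_j^{a_{1j}},\ldots,\prod_j z_j^{a_{nj}})$, i.e.\ the monomial map whose exponent matrix is $A$, and arrange the function so that $F(x;\z)=xF(x;\Omega(\z))+z_1$ or a similar Mahler functional equation holds. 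Substituting suitable monomials $z_i\mapsto a^{s_i}$ (coming from a basis adapted to the eigenstructure of $A$) then specializes this multivariable function back to the original $F$, and differentiating in $x$ before specializing produces the derivatives $F^{(l)}(\alpha)$ we ultimately care about.

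Next I would invoke Nishioka's algebraic independence criterion for the values of Mahler functions of several variables (the same machinery behind Theorem \ref{thm:geometric}), applied to the transformation $\Omega$. This criterion typically requires that the transformation matrix $A$ be admissible: that its eigenvalues (the roots of $\Phi(X)$) satisfy certain multiplicative independence or non-resonance conditions, and that the specialization point lie in the domain of convergence and avoid exceptional loci. This is precisely where the hypotheses $\Phi(\pm1)\neq0$ and that no ratio of two distinct roots of $\Phi(X)$ is a root of unity enter: the former guarantees that $\pm1$ are not eigenvalues (so the transformation has no trivial fixed directions obstructing the analytic setup), while the latter ensures the multiplicative independence needed to rule out spurious algebraic relations among the specialized variables, so that distinct points $\alpha$ and distinct derivative orders $l$ genuinely yield algebraically independent values.

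The main obstacle I anticipate is the verification of the admissibility and non-degeneracy conditions required by the Mahler-function criterion, together with the bookkeeping that translates them into the stated root conditions on $\Phi(X)$. In particular, checking that the family of specialization points indexed by all $\alpha\in\Qbar^\times$ and the differentiation operations in $x$ together produce a set whose images under iteration of $\Omega$ remain generic (avoid the exceptional algebraic subvarieties where the criterion fails) will require a careful analysis of the orbit structure of $\Omega$ and of how the eigenvalue ratios interact with roots of unity. Handling the derivatives $F^{(l)}$ uniformly in $l$ and simultaneously over infinitely many points $\alpha$, rather than a single value, is the technically delicate part, and I expect the hypotheses on $\Phi(X)$ to be exactly what is needed to close this gap.
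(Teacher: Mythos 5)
Your overall setup is the right one, and it is essentially the machinery behind Tanaka's proof (and behind Section \ref{sec:4} of this paper, which generalizes Theorem \ref{thm:F}): encode the recurrence \eqref{eq:LRS} via the companion-type matrix $\Omega$ of \eqref{eq:Omega}, put $M(\z)=z_1^{R_{n-1}}\cdots z_n^{R_0}$ so that $M(\Omega^k\z)=z_1^{R_{k+n-1}}\cdots z_n^{R_k}$, work with $f(x;\z)=\sum_{k\geq0}x^kM(\Omega^k\z)$ satisfying $f(x;\z)=xf(x;\Omega\z)+M(\z)$, differentiate in $x$ at fixed algebraic points $\alpha_1,\ldots,\alpha_r$ to get finite systems with constant lower-triangular matrices, specialize at $(1,\ldots,1,a)$, and verify the admissibility conditions (I), (II), (III)$_p$, (IV)$_\infty$ via Masser's vanishing theorem (Lemma \ref{lem:Masser}, Lemma \ref{lem:conditions}). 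Minor slips in your sketch: the inhomogeneous term is $M(\z)$, not $z_1$; the specialization point is simply $(1,\ldots,1,a)$, with no eigenbasis of the companion matrix involved; and ``infinitely many $\alpha$'' is not a difficulty, since algebraic independence of an infinite set is by definition checked on arbitrary finite subsets.

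The genuine gap is in how you intend to conclude. You treat Nishioka's criterion as if, once the transformation and the point are admissible and the orbit avoids exceptional loci, algebraic independence of the values follows. It does not: the criterion (Theorem \ref{thm:criterion} here, likewise Nishioka's original) is a dichotomy, asserting that if the values were algebraically dependent, then some nonzero $\Qbar$-linear combination $R(\z)=c_1f_{i_11}(\z)+\cdots+c_rf_{i_r1}(\z)$ of the first-component functions attached to a single $\alpha_i$ would be a \emph{rational} function satisfying $R(\z)=\alpha_i R(\Omega\z)+Q(M(\z))$ for a suitable rational function $Q$. The heart of Tanaka's theorem---and the place where the hypotheses $\Phi(\pm1)\neq0$ and the no-root-of-unity-ratio condition do their real work---is the non-existence result for rational solutions of such functional equations (Lemma \ref{lem:R}, a special case of Theorem 1 of Tanaka's 1999 paper): under those hypotheses any rational solution forces $Q$ to be constant, hence $c_1=\cdots=c_r=0$, a contradiction. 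Your proposal attributes the role of the hypotheses entirely to admissibility and ``orbit genericity'' (they are indeed also used there, e.g.\ to guarantee that no root of $\Phi$ is a root of unity and that $R_k=c\rho^k+o(\rho^k)$), but it never confronts the degenerate alternative of the criterion, so as written the plan cannot close. This exclusion step is not bookkeeping; it is a substantial theorem about Mahler-type functional equations and is precisely what separates this result from being an immediate corollary of Nishioka's criterion.
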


Using Mahler's method, Tanaka also constructed a complex entire function defined by an infinite product and having the property that its values and its derivatives of any order at any nonzero distinct algebraic numbers except its zeros are algebraically independent. Define
\begin{equation}\label{eq:G}
G(y):=\prod_{k=0}^\infty \left(1-a^{R_k}y\right).
\end{equation}
The following theorem was proved by applying another criterion, which was proved by Kubota \cite{Kubota} and improved by Nishioka \cite{N}.

\begin{thm}[Tanaka \cite{T2012}]\label{thm:G}
Suppose that $\Phi(\pm1)\neq0$, that the ratio of any pair of distinct roots of $\Phi(X)$ is not a root of unity, and that $\{R_k\}_{k\geq0}$ is not a geometric progression. Then the infinite subset $\{G^{(m)}(\beta)\mid\beta\in\Qbar^\times\setminus\{a^{-R_k}\}_{k\geq0},\ m\geq0\}$ of $\C$ is algebraically independent over $\Q$.
\end{thm}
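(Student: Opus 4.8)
The plan is to exhibit $G(y)$ as a one-point specialization of a multivariable infinite product satisfying a Mahler-type functional equation, and then to apply the criterion of Kubota \cite{Kubota} as improved by Nishioka \cite{N}. I would encode the recurrence \eqref{eq:LRS} by the monomial transformation $\z=(z_1,\dots,z_n)\mapsto\Omega\z$,
\[
\Omega\z:=\bigl(z_2,z_3,\dots,z_n,\ z_1^{c_n}z_2^{c_{n-1}}\cdots z_n^{c_1}\bigr),
\]
whose exponent matrix is the companion matrix $M$ of $\Phi(X)$ in \eqref{eq:Phi(X)}; thus the exponent vector of the first coordinate of $\Omega^k\z$ is $\e_1 M^k$, and the eigenvalues of $M$ are exactly the roots of $\Phi$. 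Writing $\z_0:=(a^{R_0},\dots,a^{R_{n-1}})$, the recurrence gives $(\Omega^k\z_0)_1=a^{R_k}$, so the function
\[
H(y;\z):=\prod_{k=0}^\infty\bigl(1-y\,(\Omega^k\z)_1\bigr)
\]
satisfies $G(y)=H(y;\z_0)$ and the functional equation
\[
H(y;\z)=(1-yz_1)\,H(y;\Omega\z).
\]

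Since algebraic independence of an infinite set is equivalent to that of each of its finite subsets, I fix distinct $\beta_1,\dots,\beta_s\in\Qbar^\times\setminus\{a^{-R_k}\}_{k\ge0}$ and an integer $L\ge0$, and reduce to proving that the finitely many values $G^{(m)}(\beta_i)$ $(1\le i\le s,\ 0\le m\le L)$ are algebraically independent over $\Q$. Setting $f_{i,m}(\z):=\partial_y^m H(y;\z)\big|_{y=\beta_i}$ and differentiating the functional equation $m$ times --- the factor $1-yz_1$ being linear in $y$ --- yields the triangular relations
\[
f_{i,m}(\z)=(1-\beta_i z_1)\,f_{i,m}(\Omega\z)-m\,z_1\,f_{i,m-1}(\Omega\z),
\]
so that the vector $\f=(f_{i,m})$ obeys a homogeneous first-order system $\f(\Omega\z)=A(\z)\,\f(\z)$ with $A$ a matrix of rational functions, while $G^{(m)}(\beta_i)=f_{i,m}(\z_0)$. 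This is precisely the shape of functional equation to which the Kubota--Nishioka criterion applies.

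Next I would check the \emph{admissibility} hypotheses of the criterion. On the side of the transformation, since $c_n\neq0$ and the $c_j$ are nonnegative integers, $\Phi$ possesses a dominant real root $\rho>1$ (here $\Phi(1)\neq0$ rules out $\rho=1$), and the assumption that no ratio of two distinct roots of $\Phi$ is a root of unity guarantees that $\Omega$ is admissible, i.e.\ the exponent vectors $\e_1 M^k$ grow without the periodic degeneracies that would obstruct Mahler's method. On the side of the point, $0<|a|_\infty<1$ forces $R_k\to\infty$, hence $(\Omega^k\z_0)_1=a^{R_k}\to0$, so $\z_0$ lies in the region of convergence; and the hypothesis $\beta_i\neq a^{-R_k}$ for all $k$ ensures that no factor $1-\beta_i(\Omega^k\z_0)_1$ vanishes, so $H(\beta_i;\z_0)=G(\beta_i)\neq0$ and $\z_0$ avoids the exceptional locus attached to $A$.

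The core of the argument, and the step I expect to be hardest, is to prove that the functions $\{f_{i,m}\}$ are algebraically independent over $\C(\z)$; granting this, the criterion transfers algebraic independence to the values over $\Q$. I would first handle the base functions $H(\beta_i;\z)$, which satisfy the homogeneous equations $H(\beta_i;\Omega\z)=(1-\beta_i z_1)^{-1}H(\beta_i;\z)$. A nontrivial algebraic relation among them would force a multiplicative relation
\[
\prod_{i=1}^s(1-\beta_i z_1)^{e_i}=\frac{\varphi(\Omega\z)}{\varphi(\z)}
\]
for some $\varphi\in\C(\z)$ and integers $e_i$ not all zero; that is, the multiplier would be a coboundary. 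Ruling this out is exactly where the hypotheses enter: $\Phi(\pm1)\neq0$ together with the root-ratio condition control how the divisor of the left-hand side propagates along the $\Omega$-orbit and preclude such a representation, while the assumption that $\{R_k\}_{k\geq0}$ is not a geometric progression excludes the genuinely degenerate case in which $G$ collapses to a one-variable Mahler function and such a relation does occur. Once the $H(\beta_i;\z)$ are known to be algebraically independent over $\C(\z)$, the derivatives $f_{i,m}$ with $m\ge1$ supply new transcendence: by the triangular structure this reduces to the linear independence over $\C(\z)$, modulo coboundaries, of the Lambert-type functions $\sum_{k\ge0}(\Omega^k\z)_1^{\,m}\bigl(1-\beta_i(\Omega^k\z)_1\bigr)^{-m}$, which follows from the same non-degeneracy of $\Omega$. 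Assembling these facts establishes the algebraic independence of $\{f_{i,m}\}$ over $\C(\z)$, and the criterion then yields the algebraic independence over $\Q$ of $\{G^{(m)}(\beta_i)\}$, proving the theorem.
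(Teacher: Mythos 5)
Your skeleton --- encoding the recurrence by a monomial transformation, realizing $G$ as a specialization of an infinite product satisfying $H(y;\z)=(1-yz_1)H(y;\Omega\z)$, proving functional algebraic independence, and transferring to values by a Kubota--Nishioka criterion --- is the same route as Tanaka's proof \cite{T2012}, which this paper follows and generalizes. But the pivotal transfer step fails as you wrote it. Differentiating under $y=\beta_i$ gives the coupled homogeneous system $f_{i,m}(\z)=(1-\beta_iz_1)f_{i,m}(\Omega\z)-mz_1f_{i,m-1}(\Omega\z)$, i.e.\ $\f(\z)=A(\z)\f(\Omega\z)$ with a \emph{non-constant} lower triangular matrix of rational functions, and you assert this ``is precisely the shape'' the criterion handles. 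It is not: Kubota's criterion (Nishioka \cite[Theorem 3.5]{N}) treats decoupled scalar equations $g(\z)=e(\z)g(\Omega\z)+b(\z)$; Nishioka's criterion \cite{N1996} and the combined criterion of this paper (Theorem \ref{thm:criterion}) allow coupling only through \emph{constant} matrices $A_i$, with non-constant multipliers $e_h(\z)$ permitted only in decoupled homogeneous scalar equations. The restriction is not cosmetic: the value-transfer proofs rest on the entries of $A^k$ being exponential polynomials $\sum_\gamma p_\gamma(k)\gamma^k$ in $k$ (see Lemma \ref{lem:prime} and Proposition \ref{prop:nonzero}), which is false for cocycles $A(\z)A(\Omega\z)\cdots A(\Omega^{k-1}\z)$. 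The missing idea is the logarithmic-derivative decoupling used in Section \ref{sec:2} and by Tanaka: $G^{(m)}(y)=G(y)A_m\bigl(H(1,y),\ldots,\partial_y^{m-1}H(1,y)\bigr)$, which replaces the numbers $G^{(m)}(\beta_j)$ by polynomials in $G(\beta_j)$ --- whose function satisfies a scalar homogeneous equation with multiplier $1-\beta_jM(\z)$ --- and in values of the Lambert-type series $h_{jm}(\z)=\sum_{k}\bigl(M(\Omega^k\z)/(1-\beta_jM(\Omega^k\z))\bigr)^{m+1}$, which satisfy inhomogeneous scalar equations with \emph{constant} multiplier. You gesture at these Lambert-type functions in the functional-independence discussion, but you never carry out this reduction for the values, and without it no criterion of the required form exists. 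Note also that ruling out the resulting rational solutions (your coboundary relation and its inhomogeneous analogue) is not something to redo from scratch: it is exactly Tanaka's Theorems 1 and 2 of \cite{T1999} (Lemmas \ref{lem:R} and \ref{lem:S} here), and the non-geometric-progression hypothesis enters precisely there, as you surmised.

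Separately, your choice of point is genuinely flawed. You place the initial values in the point $\z_0=(a^{R_0},\ldots,a^{R_{n-1}})$; admissibility is then a joint property of $(\Omega,\z_0)$, and Masser's condition (IV)$_\infty$ (Lemma \ref{lem:Masser}) fails whenever $\{R_k\}_{k\geq0}$ satisfies an integer linear recurrence of length $<n$, since a relation $\sum_j i_jR_{k+j-1}=0$ for all $k$ produces the forbidden identity $\prod_j\bigl((\Omega^k\z_0)_j\bigr)^{i_j}=1$. This situation is allowed by the hypotheses of Theorem \ref{thm:G}: take $\Phi(X)=X^4-3X-2=(X^2-X-1)(X^2+X+2)$, which has $\Phi(\pm1)\neq0$ and no ratio of distinct roots a root of unity, and let $\{R_k\}$ be the Lucas numbers $2,1,3,4,7,\ldots$, which solve $R_{k+4}=3R_{k+1}+2R_k$, are not a geometric progression, yet satisfy $R_{k+2}=R_{k+1}+R_k$. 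On this instance your argument collapses although the theorem is true. Tanaka's encoding avoids the trap precisely by carrying the initial values in the monomial $M(\z)=z_1^{R_{n-1}}\cdots z_n^{R_0}$ and taking the point $(1,\ldots,1,a)$, so that admissibility (Lemma \ref{lem:conditions}, from \cite{T1996}) depends only on $\Omega$ and $a$, not on $R_0,\ldots,R_{n-1}$.
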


\begin{rem}\label{rem:Rk}
{\rm 
It is shown in Remark 2 of Tanaka \cite{T1996} that, if $\Phi(\pm1)\neq0$ and if the ratio of any pair of distinct roots of $\Phi(X)$ is not a root of unity, then $R_k=c\rho^k+o(\rho^k)$, where $\rho>1$ and $c>0$, so that $F(x)$ and $G(y)$ are complex entire functions. 
}
\end{rem}

\begin{rem}\label{rem:geom}
{\rm 
In the case where $\{R_k\}_{k\geq0}$ is a geometric progression, Theorem \ref{thm:G} is not valid (cf. Tanaka \cite[Remark 2]{T2012}). Note that $\{R_k\}_{k\geq0}$ is a geometric progression if and only if $R_1\neq0$ and $R_kR_{k+2}=R_{k+1}^2$ for all $k$ with $0\leq k\leq n-2$. 
}
\end{rem}

One of the main purpose of this paper is to construct an entire function of two variables which possesses the notable algbraic independence property such as the functions stated in Theorems \ref{thm:factorial}--\ref{thm:G} even for its partial derivatives. In what follows, we consider not only the complex case but also the $p$-adic case. Again let $p$ be $\infty$ or a prime number and fix an algebraic number $a$ with $0<|a|_p<1$. Let $\{R_k\}_{k\geq0}$ be a linear recurrence of nonnegative integers satisfying \eqref{eq:LRS} and $\Phi(X)$ the polynomial defined by \eqref{eq:Phi(X)}. We note that the degree $n$ of $\Phi(X)$ is greater than $1$. In the case where $p$ is $\infty$, we suppose that $\{R_k\}_{k\geq0}$ satisfies the following condition, which is the same as that assumed in Theorem \ref{thm:G}:
\begin{enumerate}
\setlength{\leftskip}{4.5mm}
\renewcommand{\labelenumi}{(R)$_{\infty}$}
\item $\Phi(\pm1)\neq0$, the ratio of any pair of distinct roots of $\Phi(X)$ is not a root of unity, and $\{R_k\}_{k\geq0}$ is not a geometric progression.
\end{enumerate}
On the other hand, in the case where $p$ is a prime number, we suppose that $\{R_k\}_{k\geq0}$ satisfies the following condition, which is stronger than (R)$_\infty$ (cf. Tanaka \cite[Remark 1]{T1996}):
\begin{enumerate}
\setlength{\leftskip}{3mm}
\renewcommand{\labelenumi}{(R)$_p$}
\setcounter{enumi}{3}
\item  $\Phi(X)$ is irreducible over $\Q$ and the roots $\rho_1,\ldots,\rho_n$ of $\Phi(X)$ satisfy $\rho_1>\max\{|\rho_2|_\infty,\ldots,|\rho_n|_\infty\}$.
\end{enumerate}
Then $F(x)$ and $G(y)$ defined respectively by \eqref{eq:F} and \eqref{eq:G} are entire functions on $\C_p$. We define a two-variable function $\Theta(x,y)$, the main object in this paper, by
$$\Theta(x,y):=\sum_{k=0}^\infty a^{R_k}x^k\prod_{\substack{j=0\\j\neq k}}^\infty\left(1-a^{R_j}y\right).$$
By Remark \ref{rem:Rk}, $\Theta(x,y)$ is an entire function on $\C_p\times\C_p$. We assert that $F(x)$ and $-G'(y)$ are specializations of $\Theta(x,y)$. Indeed, substituting $y=0$ into $\Theta(x,y)$, we have $\Theta(x,0)=F(x)$, so that
\begin{equation}\label{eq:Theta and F}
\frac{\partial^{l}\Theta}{\partial x^l}(x,0)=F^{(l)}(x)\quad(l\geq0).
\end{equation}
On the other hand, substituting $x=1$ into $\Theta(x,y)$, we see by the logarithmic derivative of $G(y)$ that 
$$
\Theta(1,y)=\prod_{j=0}^\infty\left(1-a^{R_j}y\right)\times\sum_{k=0}^\infty \frac{a^{R_k}}{1-a^{R_k}y}=-G'(y),
$$
so that
\begin{equation}\label{eq:Theta and G'}
\frac{\partial^{m}\Theta}{\partial y^m}(1,y)=-G^{(m+1)}(y)\quad(m\geq0).
\end{equation}
To state our main theorem, let us introduce the following notation. For each algebraic number $\beta$, we define
$$N_\beta:=\sharp\{k\geq 0 \mid a^{-R_k}=\beta\}=\ord_{y=\beta}G(y).$$
Then, by Remark \ref{rem:Rk}, $N_\beta$ is $0$ or $1$ for all but finitely many $\beta$. The following theorem, which establishes the algebraic independence of the ``direct product'' of the infinite sets treated in Theorems \ref{thm:F} and \ref{thm:G}, is the main theorem of the present paper. 

\begin{thm}\label{thm:main}
Let $p$ be $\infty$ or a prime number. Suppose that $\{R_k\}_{k\geq0}$ satisfies the condition {\rm (R)}$_p$. Then the infinite subset 
$$
\left\{\frac{\partial^{l+m}\Theta}{\partial x^l\partial y^m}(\alpha,\beta)\relmiddle| \alpha\in\Qbar^\times,\ \beta\in\Qbar,\ l\geq0,\ m\geq N_\beta\right\}{\textstyle\bigcup}\left\{G^{(N_\beta)}(\beta)\relmiddle|\beta\in\Qbar^\times\right\}
$$
of $\Qbar_p$ is algebraically independent over $\Q$.
\end{thm}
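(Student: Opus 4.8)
The plan is to prove algebraic independence by Mahler's method, reducing the assertion for the whole infinite set to that of an arbitrary finite subset and lifting everything to a system of Mahler functions in several variables. Since a set is algebraically independent over $\Q$ precisely when each of its finite subsets is, I would fix finitely many points $\alpha_1, \ldots, \alpha_s \in \Qbar^\times$, $\beta_1, \ldots, \beta_t \in \Qbar$ and a common bound on the orders $l,m$, and aim to show that the corresponding finitely many values, together with the relevant $G^{(N_\beta)}(\beta)$, are algebraically independent. The specializations \eqref{eq:Theta and F} and \eqref{eq:Theta and G'} show that the $x$-derivatives of $\Theta$ at $y=0$ recover the values $F^{(l)}(\alpha)$ of Theorem \ref{thm:F} and that the $y$-derivatives at $x=1$ recover the values $G^{(m)}(\beta)$ of Theorem \ref{thm:G}; the genuinely new content is the \emph{joint} independence of the mixed partials at arbitrary $(\alpha,\beta)$, for which those two theorems do not suffice and a two-variable Mahler function is required.

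Next I would introduce auxiliary variables $z_1,\ldots,z_n$ and the transformation
$$\Omega\colon (z_1, \ldots, z_n) \longmapsto \bigl(z_n^{c_n},\, z_1 z_n^{c_{n-1}},\, z_2 z_n^{c_{n-2}},\, \ldots,\, z_{n-1} z_n^{c_1}\bigr)$$
attached to the companion matrix of \eqref{eq:LRS}, chosen so that the monomial $z_1^{R_k}\cdots z_n^{R_{k+n-1}}$ is carried by $\Omega$ exactly to $z_1^{R_{k+1}}\cdots z_n^{R_{k+n}}$, thereby encoding the recurrence \eqref{eq:LRS}. Lifting $\Theta$ to a function $\widetilde{\Theta}(x,y;z_1,\ldots,z_n)$ by replacing each $a^{R_j}$ with the corresponding monomial in the $z_i$, and likewise lifting $G$, I expect the lifts to satisfy inhomogeneous Mahler functional equations of the shape $\widetilde{\Theta}(x,y;\z) = A(x,y;\z)\,\widetilde{\Theta}(x,y;\Omega\z) + B(x,y;\z)$, with $x,y$ carried along untransformed, and to specialize to $\Theta$ and $G$ at $\z=(a,1,\ldots,1)$. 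Because partial derivatives of Mahler functions are again Mahler functions for the prolonged transformation, the finite family of mixed partials $\partial_x^l\partial_y^m\widetilde{\Theta}$ together with the derivatives of the $G$-lift forms a finite system closed, up to a triangular structure, under $\Omega$; specializing $x=\alpha$, $y=\beta$ then yields genuine Mahler functions in $z_1,\ldots,z_n$ whose values at $(a,1,\ldots,1)$ are the target numbers.

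The role of \emph{shifting the linear recurrence}, announced in the abstract, enters when $\beta$ is one of the special values $a^{-R_{k_0}}$: there the factor $(1-a^{R_{k_0}}y)$ in $G$ and in $\Theta$ vanishes, with total multiplicity $N_\beta=\ord_{y=\beta}G(y)$. I would factor out these $N_\beta$ vanishing factors, equivalently pass to the shifted recurrence $\{R_{k+s}\}$ with the offending indices removed, so that the remaining function is nonzero at $y=\beta$ and still satisfies a Mahler equation for the shifted recurrence. This is exactly why the admissible orders are $m\geq N_\beta$ and why the leading nonvanishing values $G^{(N_\beta)}(\beta)$ appear as separate generators: the $y$-derivatives of order below $N_\beta$ are forced, while the first genuinely independent datum at such a $\beta$ is $G^{(N_\beta)}(\beta)$.

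The main obstacle is the algebraic independence of these functions over the field of rational functions in $x,y,z_1,\ldots,z_n$; once that is secured, the value statement follows from the established criteria of Nishioka \cite{N1996} for the $F$-type part and of Kubota and Nishioka \cite{Kubota,N} for the $G$-type part, after checking that $\Omega$ is admissible and that the specialization point $(\alpha,\beta,a,1,\ldots,1)$ avoids the relevant exceptional set. Condition {\rm (R)}$_p$ is precisely what makes both steps work: the non-root-of-unity and dominant-root hypotheses control the multiplicative relations among the $\rho_i^k$ and hence the admissibility of $\Omega$ (cf.\ the asymptotics $R_k=c\rho^k+o(\rho^k)$ of Remark \ref{rem:Rk}), while the non-geometric hypothesis rules out the degeneracy of Remark \ref{rem:geom}. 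For the function-field independence itself I would argue from the product structure of $G$ together with a Wronskian-type computation in the derivative directions, exploiting that distinct $\alpha_i$ separate the $x$-dependence, distinct $\beta_j$ separate the $y$-dependence through the infinite product, and the shift construction keeps the values at the $\beta_j$ nonzero; ruling out every nontrivial polynomial relation across this entire direct product, uniformly in the derivative orders, is the delicate heart of the proof.
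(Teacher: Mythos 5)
Your high-level plan (reduction to finite subsets, companion-matrix lift encoding the recurrence, shifting the recurrence to escape the zeros of $G$) agrees with the paper's strategy, but the proposal misses the two ideas that make the proof actually close. The first is the factorization $\Theta(x,y)=G(y)H(x,y)$ with $H(x,y)=\sum_{k\geq0}a^{R_k}x^k/(1-a^{R_k}y)$. You propose to lift $\Theta$ itself to a Mahler function by replacing $a^{R_j}$ with monomials, but the resulting lift satisfies
$$\widetilde{\Theta}(x,y;\z)=x\bigl(1-yM(\z)\bigr)\widetilde{\Theta}(x,y;\Omega\z)+M(\z)\,\widetilde{G}(y;\Omega\z),$$
where $\widetilde{G}$ is the lift of the infinite product: the coefficient is non-constant in $\z$ and, worse, the inhomogeneous term is itself a transcendental function rather than a rational one, and taking $y$-derivatives only compounds this. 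No criterion in play (Kubota's, Nishioka's, or the paper's Theorem \ref{thm:criterion}) applies to such a system. The paper avoids this entirely: its Section 2 is an elementary but essential linear-algebra reduction (triangular matrices over $\Qbar$ with nonzero diagonal, \eqref{eq:Theta_H}--\eqref{eq:F_m_Theta}) showing that the statement for $\Theta$ is equivalent to Theorem \ref{thm:main2}, a statement about $H$, $F_m$, and $G$ for the shifted recurrence; it is $H$, not $\Theta$, whose $y$-derivatives produce the clean rational inhomogeneous terms $\bigl(M(\z)/(1-\beta_jM(\z))\bigr)^{m+1}$, while $G$ satisfies a homogeneous first-order equation.

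The second gap is decisive: you assert that, once function-field independence is known, the value statement follows from Nishioka's criterion ``for the $F$-type part'' and Kubota's ``for the $G$-type part.'' Applying the two criteria separately can only give algebraic independence of each family separately, never of their union, and the joint independence is precisely the content of the theorem (you yourself identify it as the genuinely new content). This is why the paper must prove a new hybrid criterion, Theorem \ref{thm:criterion} in Section 3, which handles simultaneously Nishioka-type blocks (constant lower-triangular matrices $A_i$, rational inhomogeneous terms) and Kubota-type factors ($g_h(\z)=e_h(\z)g_h(\Omega\z)$ with non-constant rational $e_h$); its proof requires a new value-independence theorem (Theorem \ref{thm:indep_values}) valid in both the complex and $p$-adic settings. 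Relatedly, your ``Wronskian-type computation'' for the function-level independence is not a workable substitute for the paper's mechanism: there, dependence forces the existence of a rational solution $R(\z)$ or $S(\z)$ of an explicit functional equation, and Tanaka's results (Lemmas \ref{lem:R} and \ref{lem:S}) reduce this to the impossibility of the identity $\sum_m c_{0m}X^{m+1}+\sum_{j,m}c_{jm}\bigl(X/(1-\beta_jX)\bigr)^{m+1}=\delta$, ruled out by a partial-fraction argument using the distinctness of the $\beta_j$. Without the factorization through $H$ and without the hybrid criterion, the proposal cannot be completed as written.
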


By \eqref{eq:Theta and F}, \eqref{eq:Theta and G'}, and Theorem \ref{thm:main}, we can refine Theorems \ref{thm:F} and \ref{thm:G}, namely we obtain the algebraic independence of the union of the infinite sets treated in Theorems \ref{thm:F} and \ref{thm:G} as well as the nonzero derivatives at the zeros of the infinite product $G(y)$.

\begin{cor}\label{cor:union}
Let $p$ be $\infty$ or a prime number. Suppose that $\{R_k\}_{k\geq0}$ satisfies the condition {\rm (R)}$_p$. Then the infinite subset 
$$
\left\{F^{(l)}(\alpha)\relmiddle|\alpha\in\Qbar^\times, \ l\geq 0\right\}{\textstyle\bigcup}\left\{G^{(m)}(\beta)\relmiddle|\beta\in\Qbar^\times,\ m\geq N_\beta\right\}
$$
of $\Qbar_p$ is algebraically independent over $\Q$.
\end{cor}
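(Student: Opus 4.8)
The plan is to obtain Corollary \ref{cor:union} as a direct consequence of Theorem \ref{thm:main}, using the specialization identities \eqref{eq:Theta and F} and \eqref{eq:Theta and G'} to exhibit the set in the corollary as a subset, up to signs, of the algebraically independent set furnished by the main theorem, which I denote $T$. First I would account for the $F$-values. Since $0<|a|_p<1$ forces $a^{-R_k}\neq 0$ for every $k$, we have $N_0=0$; hence, taking $\beta=0$, $m=0$ in the first constituent of $T$ and invoking \eqref{eq:Theta and F} gives
$$F^{(l)}(\alpha)=\frac{\partial^{l}\Theta}{\partial x^{l}}(\alpha,0)\in T\qquad(\alpha\in\Qbar^\times,\ l\geq0),$$
so every $F$-value in the corollary already appears among the elements of $T$.

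Next I would account for the $G$-values at $\beta\in\Qbar^\times$, treating $m=N_\beta$ and $m\geq N_\beta+1$ separately. The value $G^{(N_\beta)}(\beta)$ is itself one of the elements forming the second constituent of $T$. For $m\geq N_\beta+1$, specializing \eqref{eq:Theta and G'} at $\alpha=1\in\Qbar^\times$ with $l=0$ gives
$$G^{(m)}(\beta)=-\frac{\partial^{m-1}\Theta}{\partial y^{m-1}}(1,\beta),$$
whose right-hand side lies in the first constituent of $T$ because the $y$-order $m-1$ satisfies $m-1\geq N_\beta$. Thus each element of the corollary's set equals $\pm1$ times a member of $T$, the $F$-values and the values $G^{(N_\beta)}(\beta)$ contributing with sign $+1$ and the values $G^{(m)}(\beta)$ with $m\geq N_\beta+1$ contributing with sign $-1$.

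Finally I would conclude by the standard fact that algebraic independence over $\Q$ passes to any subset and is unaffected by rescaling individual elements by nonzero rationals: a nonzero polynomial relation over $\Q$ among finitely many elements of the corollary's set would, after absorbing the signs into the variables, yield a nonzero polynomial relation among the corresponding members of $T$, contradicting Theorem \ref{thm:main}. The one point demanding care---and the main, albeit modest, obstacle, since all the substantive work is already carried out in Theorem \ref{thm:main}---is to check that this correspondence into $T$ is injective, so that the induced relation genuinely involves distinct elements of $T$ and no two elements of the corollary's set are forced to be negatives of one another. This is guaranteed by the pairwise disjointness of the parameter ranges employed in the three families: the $F$-values arise only from $\beta=0$, the values $G^{(m)}(\beta)$ with $m\geq N_\beta+1$ only from $\alpha=1$ and $\beta\in\Qbar^\times$ (hence $\beta\neq0$), and the values $G^{(N_\beta)}(\beta)$ from the separate second constituent of $T$; consequently the elements picked out are pairwise distinct members of $T$, and the subset argument applies to give the corollary.
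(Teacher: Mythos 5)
Your proposal is correct and follows exactly the route the paper intends: Corollary \ref{cor:union} is obtained by specializing the set of Theorem \ref{thm:main} via \eqref{eq:Theta and F} (at $\beta=0$, using $N_0=0$) and \eqref{eq:Theta and G'} (at $\alpha=1$, shifting $m$ by one), and noting that algebraic independence passes to subsets up to nonzero rational rescaling. Your added care about injectivity of the correspondence into $T$ (justified since the parameter tuples used are pairwise distinct, and the paper's proof of Theorem \ref{thm:main} establishes independence of the values indexed by distinct tuples, hence their distinctness) is exactly the detail implicit in the paper's one-line derivation.
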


Let us describe another corollary of Theorem \ref{thm:main}. We define
\begin{equation}\label{eq:Xi}
\Xi(x,y):=\frac{\partial\Theta}{\partial y}(x,y)=\prod_{j=0}^\infty\left(1-a^{R_j}y\right)\times\sum_{\substack{k_1,k_2\geq0\\k_1\neq k_2}}\frac{-a^{R_{k_1}+R_{k_2}}x^{k_1}}{(1-a^{R_{k_1}}y)(1-a^{R_{k_2}}y)}.
\end{equation}
Theorem \ref{thm:main} implies that, if $\{R_k\}_{k\geq0}$ is strictly increasing, then the entire function $\Xi(x,y)$ on $\C_p\times\C_p$ have the following notable property: The infinite set consisting of its values and its partial derivatives of any order at any distinct algebraic points $(\alpha,\beta)$ with $\alpha\neq0$ is algebraically independent.

\begin{cor}\label{cor:Xi}
Let $p$ be $\infty$ or a prime number. Suppose that $\{R_k\}_{k\geq0}$ satisfies the condition {\rm (R)}$_p$. Assume in addition that $\{R_k\}_{k\geq0}$ is strictly increasing. Then the infinite subset 
$$
\left\{\frac{\partial^{l+m}\Xi}{\partial x^l\partial y^m}(\alpha,\beta)\relmiddle| \alpha\in\Qbar^\times,\ \beta\in\Qbar,\ l\geq0,\ m\geq 0\right\}
$$
of $\Qbar_p$ is algebraically independent over $\Q$.
\end{cor}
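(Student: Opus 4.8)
The plan is to deduce Corollary \ref{cor:Xi} directly from Theorem \ref{thm:main} by a change of index, with no new analytic input. Since $\Xi(x,y)=\partial\Theta/\partial y$ by \eqref{eq:Xi}, differentiating gives
$$\frac{\partial^{l+m}\Xi}{\partial x^l\partial y^m}(\alpha,\beta)=\frac{\partial^{l+(m+1)}\Theta}{\partial x^l\partial y^{m+1}}(\alpha,\beta)\qquad(l\geq0,\ m\geq0).$$
Hence, writing $M=m+1$, the set appearing in the corollary coincides with
$$\left\{\frac{\partial^{l+M}\Theta}{\partial x^l\partial y^{M}}(\alpha,\beta)\relmiddle|\alpha\in\Qbar^\times,\ \beta\in\Qbar,\ l\geq0,\ M\geq1\right\}.$$

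First I would observe that the strict monotonicity hypothesis forces $N_\beta\leq1$ for every algebraic $\beta$. Indeed, if $\{R_k\}_{k\geq0}$ is strictly increasing, then the $R_k$ are pairwise distinct nonnegative integers, and since $0<|a|_p<1$ the absolute values $|a^{-R_k}|_p=|a|_p^{-R_k}$ are pairwise distinct; thus the algebraic numbers $a^{-R_k}$ are pairwise distinct, so at most one index $k$ can satisfy $a^{-R_k}=\beta$. By the definition of $N_\beta$ this means $N_\beta\in\{0,1\}$ for all $\beta\in\Qbar$.

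The key step is then the containment. Because $N_\beta\leq1$ for all $\beta$, every element of the corollary's set, which has $M\geq1$, satisfies $M\geq1\geq N_\beta$, hence $M\geq N_\beta$. Therefore the corollary's set is a subset of the two-variable derivative part
$$\left\{\frac{\partial^{l+m}\Theta}{\partial x^l\partial y^m}(\alpha,\beta)\relmiddle|\alpha\in\Qbar^\times,\ \beta\in\Qbar,\ l\geq0,\ m\geq N_\beta\right\}$$
of the algebraically independent set furnished by Theorem \ref{thm:main}. Since any subset of a set that is algebraically independent over $\Q$ is itself algebraically independent over $\Q$, the conclusion follows at once.

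The argument presents essentially no obstacle, since all of the analytic and Mahler-theoretic work is already contained in Theorem \ref{thm:main}. The only points requiring care are the index shift $m\mapsto m+1$ induced by the relation $\Xi=\partial\Theta/\partial y$, and the verification that strict monotonicity guarantees $N_\beta\leq1$; the latter is precisely what allows the shifted lower bound $M\geq1$ to absorb the constraint $m\geq N_\beta$ uniformly in $\beta$, so that passing to $\Xi$ costs us only the $m=0$ slices at those $\beta$ with $N_\beta=0$ while retaining algebraic independence.
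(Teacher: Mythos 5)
Your proposal is correct and follows exactly the route the paper intends (the paper only sketches it in the paragraph preceding the corollary): identify $\partial^{l+m}\Xi/\partial x^l\partial y^m$ with $\partial^{l+m+1}\Theta/\partial x^l\partial y^{m+1}$, note that strict monotonicity of $\{R_k\}_{k\geq0}$ together with $0<|a|_p<1$ forces $N_\beta\leq 1$, and conclude that the resulting set is a subset of the algebraically independent set of Theorem \ref{thm:main}. No gaps; the verification that the $a^{-R_k}$ are pairwise distinct via their $p$-adic absolute values is exactly the needed point.
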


\begin{ex}\label{ex:Fibonacci}
{\rm
Let $p$ be $\infty$ or a prime number and fix an algebraic number $a$ with $0<|a|_p<1$. Let $\{F_k\}_{k\geq0}$ be the Fibonacci numbers defined by
$$F_0=0,\quad F_1=1,\quad F_{k+2}=F_{k+1}+F_k\quad(k\geq0).$$
Regarding $\{F_{k+2}\}_{k\geq0}$ as $\{R_k\}_{k\geq0}$, we define the entire function $\Xi(x,y)$ on $\C_p\times\C_p$ by \eqref{eq:Xi}, namely,
$$\Xi(x,y)=\prod_{j=2}^\infty\left(1-a^{F_j}y\right)\times\sum_{\substack{k_1,k_2\geq2\\k_1\neq k_2}}\frac{-a^{F_{k_1}+F_{k_2}}x^{k_1-2}}{(1-a^{F_{k_1}}y)(1-a^{F_{k_2}}y)}.$$
Then by Corollary \ref{cor:Xi} the infinite subset
$$
\left\{\frac{\partial^{l+m}\Xi}{\partial x^l\partial y^m}(\alpha,\beta)\relmiddle| \alpha\in\Qbar^\times,\ \beta\in\Qbar,\ l\geq0,\ m\geq 0\right\}
$$
of $\Qbar_p$ is algebraically independent over $\Q$.
}
\end{ex}

Theorem \ref{thm:main} is deduced from Theorem \ref{thm:main2} below. We define
$$
H(x,y):=\sum_{k=0}^\infty\frac{a^{R_k}x^k}{1-a^{R_k}y}
$$
and
$$
F_m(x):=\frac{\partial^mH}{\partial y^m}(x,0)=m!\sum_{k=0}^\infty a^{(m+1)R_k}x^k\quad(m=0,1,2,\ldots).
$$
Then $H(x,y)$ is a holomorphic function on $\C_p\times(\C_p\setminus\{a^{-R_k}\}_{k\geq0})$ and $F_m(x)$ $(m\geq0)$ are entire functions on $\C_p$.

\begin{thm}\label{thm:main2}
Let $p$ be $\infty$ or a prime number. Suppose that $\{R_k\}_{k\geq0}$ satisfies the condition {\rm (R)}$_p$. Then the infinite subset 
\begin{gather*}
\left\{\frac{\partial^{l+m}H}{\partial x^l\partial y^m}(\alpha,\beta)\relmiddle| \alpha\in\Qbar^\times,\ \beta\in\Qbar^\times\setminus\{a^{-R_k}\}_{k\geq0},\ l\geq0,\ m\geq0\right\}\\
{\textstyle\bigcup}\left\{F_m^{(l)}(\alpha)\relmiddle| \alpha\in\Qbar^\times, \ l\geq 0,\ m\geq0\right\}{\textstyle\bigcup}\left\{G(\beta)\relmiddle|\beta\in\Qbar^\times\setminus\{a^{-R_k}\}_{k\geq0}\right\}
\end{gather*}
of $\Qbar_p$ is algebraically independent over $\Q$.
\end{thm}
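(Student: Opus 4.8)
The plan is to reduce the statement to the algebraic independence over $\Q$ of the values, at one fixed point, of Mahler functions in the $n$ variables $\z=(z_1,\dots,z_n)$, where $n$ is the order of the recurrence \eqref{eq:LRS}. Writing $Z_k(\z):=\prod_{i=1}^{n}z_i^{R_{k+i-1}}$, I would introduce the two several-variable functions
$$\mathcal H(x,y;\z):=\sum_{k=0}^{\infty}\frac{x^kZ_k(\z)}{1-yZ_k(\z)},\qquad \mathcal G(y;\z):=\prod_{k=0}^{\infty}\bigl(1-yZ_k(\z)\bigr),$$
so that the specialization $\z=\z^{*}:=(a,1,\dots,1)$ gives $Z_k(\z^{*})=a^{R_k}$ and hence $\mathcal H(x,y;\z^{*})=H(x,y)$ and $\mathcal G(y;\z^{*})=G(y)$. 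The shifts $R_{k+i-1}$ appearing here encode the recurrence: \eqref{eq:LRS} is equivalent to $Z_{k+1}(\z)=Z_k(T\z)$ for the monomial transformation $T$ whose exponent matrix is the companion matrix $\Omega$ of $\Phi(X)$, namely $(T\z)_1=z_n^{c_n}$ and $(T\z)_i=z_{i-1}z_n^{c_{n-i+1}}$ for $2\le i\le n$. Peeling off the term $k=0$ then yields the Mahler-type functional equations
$$\mathcal H(x,y;\z)=\frac{Z_0(\z)}{1-yZ_0(\z)}+x\,\mathcal H(x,y;T\z),\qquad \mathcal G(y;\z)=\bigl(1-yZ_0(\z)\bigr)\mathcal G(y;T\z),$$
in which $x$ and $y$ are mere parameters. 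The orbit $\{T^k\z^{*}\}_{k\ge0}$ has coordinates that are powers of $a$ and tends to $0$; moreover condition {\rm(R)}$_p$ is precisely what makes $\Omega$ admissible for Mahler's method, its characteristic polynomial being $\Phi(X)$ with dominant root $\rho_1$ and the nondegeneracy of the eigenvalue ratios guaranteed as in Tanaka \cite{T1996}.

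Since $x,y$ occur only as parameters, I would next fix an arbitrary finite subfamily of the set in the theorem: finitely many base points together with derivative orders $0\le l\le L$ and $0\le m\le M$. Differentiating the functional equation for $\mathcal H$ and specializing the parameters turns this finite subfamily into functions of $\z$ alone satisfying a single linear system $\f(\z)=A(\z)\f(T\z)+\bb(\z)$ over $\Qbar(\z)$: each $\partial_y$ merely alters the rational coefficient, while each $\partial_x$, applied to the factor $x$ via the Leibniz rule, produces a system that is triangular in $l$, expressing $\partial_x^{l}\partial_y^{m}\mathcal H(\cdot;\z)$ through $\partial_x^{l}\partial_y^{m}\mathcal H$ and $\partial_x^{l-1}\partial_y^{m}\mathcal H$ at $T\z$, the exclusion $\beta\neq a^{-R_k}$ ensuring that the coefficients are regular along the orbit $\{T^k\z^{*}\}$. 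The functions $F_m$ need no separate treatment, since $F_m^{(l)}(\alpha)$ is the value of $\partial_x^{l}\partial_y^{m}\mathcal H$ at $(x,y,\z)=(\alpha,0,\z^{*})$, a point at which every denominator $1-yZ_k$ is regular; and each value $G(\beta)$ enters through a homogeneous $1\times1$ block $\mathcal G(\beta;\z)=(1-\beta Z_0(\z))\mathcal G(\beta;T\z)$. Invoking the criterion for algebraic independence of values of Mahler functions---Nishioka \cite{N1996} for the inhomogeneous part and the Kubota--Nishioka criterion \cite{Kubota}, \cite{N} for the multiplicative blocks $\mathcal G(\beta;\z)$---the algebraic independence over $\Q$ of the values at $\z^{*}$ follows once the corresponding functions of $\z$ are shown to be algebraically independent over $\Qbar(\z)$. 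As algebraic independence of every finite subfamily implies that of the whole infinite set, this completes the reduction from values to functions.

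The main obstacle is exactly this functional algebraic independence: that the derivatives $\partial_x^{l}\partial_y^{m}\mathcal H$ evaluated at the chosen distinct parameter points, together with the functions $\mathcal G(\beta;\z)$, are algebraically independent over $\Qbar(\z)$. This is where the hypotheses on the base points are consumed---the $\alpha$'s nonzero, the $\beta$'s nonzero and distinct from the poles $a^{-R_k}$, and all base points distinct---and where ``shifting the recurrence,'' i.e. iterating $T$, does the real work. I would treat the product functions first, showing each $\mathcal G(\beta;\z)$ transcendental over the field generated by the $\mathcal H$-derivatives by an order-of-vanishing argument along the hypersurfaces $yZ_k(\z)=1$, where $\mathcal G$ vanishes to first order but the $\mathcal H$-data have poles; this is the several-variable incarnation of the fact that the value $G(\beta)$ carries information independent of the logarithmic-derivative data in the values $H(1,\beta)$. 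For the inhomogeneous family I would proceed in two stages: first establish the linear independence over $\Qbar(\z)$ of the functions attached to distinct $(\alpha,\beta)$ and distinct orders $(l,m)$---isolating each by comparing leading behaviour under repeated application of $T$, the distinctness of the points preventing cancellation---and then upgrade linear independence to algebraic independence by invoking the eigenvalue conditions on $\Omega$ furnished by {\rm(R)}$_p$, namely that no ratio of distinct roots of $\Phi(X)$ is a root of unity, resp. that $\Phi(X)$ is irreducible with a dominant root, which rule out the multiplicative relations that algebraic dependence would force, exactly as in the general passage from linear to algebraic independence in Mahler's method.
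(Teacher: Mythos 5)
Your setup coincides with the paper's own: your $Z_k(\z)$, $\mathcal{H}$, $\mathcal{G}$, the companion-matrix substitution $\z\mapsto\Omega\z$, the specialization at a point whose coordinates are $1$'s and $a$, and the triangular systems obtained by differentiating in $x$ and evaluating at the $\alpha_i$ are exactly the functions $h_{jm}(x;\z)$, $f_m(x;\z)$, $g_j(\z)$ of Section \ref{sec:4} (the paper absorbs $\partial_y^m$ as the $(m+1)$-st power of $M(\Omega^k\z)/(1-\beta_jM(\Omega^k\z))$, which is only a cosmetic difference). The first genuine gap is your appeal to ``Nishioka \cite{N1996} for the inhomogeneous part and the Kubota--Nishioka criterion \cite{Kubota}, \cite{N} for the multiplicative blocks.'' Algebraic independence of the $\mathcal{H}$-family and, separately, of the $\mathcal{G}$-family does not imply independence of their union, and neither existing criterion applies to a system containing both inhomogeneous triangular blocks and multiplicative blocks at once. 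Constructing such a combined criterion is a central contribution of the paper (Theorem \ref{thm:criterion}, proved via Theorems \ref{thm:indep_functions} and \ref{thm:indep_values}); moreover it must be established in the $p$-adic case as well, where Nishioka's argument is not available as stated because it rests on Masser's vanishing theorem (Lemma \ref{lem:Masser}), known only for $p=\infty$; the paper substitutes the $p$-adic vanishing theorem of Lemma \ref{lem:Mahler}, and this is precisely why (R)$_p$ is taken stronger than (R)$_\infty$ for prime $p$.

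Second, your plan for the ``main obstacle'' --- an order-of-vanishing argument along $yZ_k(\z)=1$ and an unspecified upgrade from linear to algebraic independence --- is not where the difficulty is resolved, and as written it would not close. What the combined criterion actually delivers is an alternative: if the values are dependent, then either some nontrivial linear combination $R(\z)$ of the functions attached to a single $\alpha_i$ lies in $\Qbar(z_1,\ldots,z_n)$ and satisfies $R(\z)=\alpha_iR(\Omega\z)+\sum_{m}c_{0m}M(\z)^{m+1}+\sum_{j,m}c_{jm}\bigl(M(\z)/(1-\beta_jM(\z))\bigr)^{m+1}$, or some $S(\z)\in\Qbar(z_1,\ldots,z_n)^\times$ satisfies $S(\z)=S(\Omega\z)\prod_{j}(1-\beta_jM(\z))^{d_j}$ with the $d_j$ not all zero. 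The real content of the theorem is ruling out such rational-function solutions, and this is not elementary: the paper invokes Tanaka's theorems (Lemmas \ref{lem:R} and \ref{lem:S}, from \cite{T1999}, which is where condition (R)$_\infty$ --- including the non-geometric-progression hypothesis --- is consumed) to collapse the two alternatives to the one-variable identities \eqref{eq:RRR} and \eqref{eq:SSS}, and then a partial-fractions argument using that the $\beta_j$ are nonzero and distinct yields the contradiction. Nothing in your sketch confronts these hidden relations; in particular, no vanishing-order count along $yZ_k(\z)=1$ excludes the possibility that $\prod_{j}(1-\beta_jM(\z))^{d_j}$ is a ``coboundary'' $S(\z)/S(\Omega\z)$ with $S$ rational, which is exactly the case Lemma \ref{lem:S} exists to eliminate.
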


This paper is organized as follows. In Section \ref{sec:2}, we reduce Theorem \ref{thm:main} to Theorem \ref{thm:main2} by shifting the linear recurrence $\{R_k\}_{k\geq0}$ so as to avoid the zeros of the infinite product $G(y)$. In Section \ref{sec:3}, we establish a criterion for the algebraic independence of the values of Mahler functions. Our criterion, which is valid not only in the complex case but also in the $p$-adic case, includes that of Nishioka and a special case of that of Kubota. In the last section, using our criterion, we reduce Theorem \ref{thm:main2} to the existence of nontrivial rational function solutions of certain types of functional equations and complete the proof by applying Tanaka's results.

\section{Proof of Theorem \ref{thm:main}}\label{sec:2}
In this section, we deduce Theorem \ref{thm:main} from Theorem \ref{thm:main2}.

\begin{proof}[Proof of Theorem \ref{thm:main}]
Since
$$G'(y)=\prod_{j=0}^\infty\left(1-a^{R_j}y\right)\times\sum_{k=0}^\infty \frac{-a^{R_k}}{1-a^{R_k}y}=G(y)(-H(1,y)),$$
we see inductively that, for any $m\geq1$,
$$G^{(m)}(y)=G(y)A_m\left(H(1,y),\ldots,\frac{\partial^{m-1}H}{\partial y^{m-1}}(1,y)\right),$$
where $A_m(X_1,\ldots,X_m)\in\Z[X_1,\ldots,X_m]$. Hence, by
$$\Theta(x,y)=\prod_{j=0}^\infty(1-a^{R_j}y)\times\sum_{k=0}^\infty\frac{a^{R_k}x^k}{1-a^{R_k}y}=G(y)H(x,y),$$
we have
\begin{align}\label{eq:Theta_H}
&\frac{\partial^{l+m}\Theta}{\partial x^l\partial y^m}(x,y)\nonumber\\
=\ &G(y)\frac{\partial^{l+m}H}{\partial x^l\partial y^m}(x,y)\nonumber\\
&+G(y)B_m\left(H(1,y),\ldots,\frac{\partial^{m-1}H}{\partial y^{m-1}}(1,y),\frac{\partial^l H}{\partial x^l}(x,y),\ldots,\frac{\partial^{l+m-1} H}{\partial x^l\partial y^{m-1}}(x,y)\right)
\end{align}
for any $l,m\geq0$, where $B_m(X_1,\ldots,X_m,Y_1,\ldots,Y_m)\in\Z[X_1,\ldots,X_m,Y_1,\ldots,Y_m]$. Then, for any $l,m\geq0$, we see that
\begin{align}\label{eq:H_Theta}
&\frac{\partial^{l+m}H}{\partial x^l\partial y^m}(x,y)\nonumber\\
=\ &\frac{1}{G(y)}\frac{\partial^{l+m}\Theta}{\partial x^l\partial y^m}(x,y)\nonumber\\
+&\ C_m\left(\frac{\Theta(1,y)}{G(y)},\ldots,\frac{1}{G(y)}\frac{\partial^{m-1}\Theta}{\partial y^{m-1}}(1,y),\frac{1}{G(y)}\frac{\partial^l\Theta}{\partial x^l}(x,y),\ldots,\frac{1}{G(y)}\frac{\partial^{l+m-1} \Theta}{\partial x^l\partial y^{m-1}}(x,y)\right),
\end{align}
where $C_m(X_1,\ldots,X_m,Y_1,\ldots,Y_m)\in\Z[X_1,\ldots,X_m,Y_1,\ldots,Y_m]$. In particular, substituting $y=0$ into both sides of \eqref{eq:Theta_H} and \eqref{eq:H_Theta}, we see that, for any $l,m\geq0$,
\begin{equation}\label{eq:Theta_F_m}
\frac{\partial^{l+m}\Theta}{\partial x^l\partial y^m}(x,0)=F_{m}^{(l)}(x)+B_m\left(F_0(1),\ldots,F_{m-1}(1),F_0^{(l)}(x),\ldots,F_{m-1}^{(l)}(x)\right)
\end{equation}
and
\begin{align}\label{eq:F_m_Theta}
&F_{m}^{(l)}(x)\nonumber\\
=\ &\frac{\partial^{l+m}\Theta}{\partial x^l\partial y^m}(x,0)+C_m\left(\Theta(1,0),\ldots,\frac{\partial^{m-1}\Theta}{\partial y^{m-1}}(1,0),\frac{\partial^l\Theta}{\partial x^l}(x,0),\ldots,\frac{\partial^{l+m-1} \Theta}{\partial x^l\partial y^{m-1}}(x,0)\right),
\end{align}
respectively.

Let $\alpha_1,\ldots,\alpha_r$ be any nonzero distinct algebraic numbers with $\alpha_1=1$ and $\beta_1,\ldots,\beta_s$ any nonzero distinct algebraic numbers. To simplify our notation, we denote $N_j:=N_{\beta_j}$ $(1\leq j\leq s)$. In order to prove the theorem, it is enough to prove that, for any sufficiently large $L$ and $M$, the finite set
\begin{align*}
T_1:=&\left\{\frac{\partial^{l+m}\Theta}{\partial x^l\partial y^m}(\alpha_i,\beta_j)\relmiddle| 1\leq i\leq r,\ 1\leq j\leq s,\ 0\leq l\leq L,\ N_j\leq m\leq N_j+M\right\}\\
&\bigcup\left\{\frac{\partial^{l+m}\Theta}{\partial x^l\partial y^m}(\alpha_i,0)\relmiddle| 1\leq i\leq r,\ 0\leq l\leq L,\ 0\leq m\leq M\right\}\\
&{\textstyle\bigcup}\left\{G^{(N_j)}(\beta_j)\relmiddle|1\leq j\leq s\right\}
\end{align*}
is algebraically independent over $\Q$. Following several steps, we reduce the algebraic independency of $T_1$ to that of another set. We see by \eqref{eq:Theta_F_m} and \eqref{eq:F_m_Theta} that the algebraic independency of $T_1$ is equivalent to that of
\begin{align*}
T_2:=&\left\{\frac{\partial^{l+m}\Theta}{\partial x^l\partial y^m}(\alpha_i,\beta_j)\relmiddle| 1\leq i\leq r,\ 1\leq j\leq s,\ 0\leq l\leq L,\ N_j\leq m\leq N_j+M\right\}\\
&{\textstyle\bigcup}\left\{F_m^{(l)}(\alpha_i)\relmiddle| 1\leq i\leq r,\ 0\leq l\leq L,\ 0\leq m\leq M\right\}\\
&{\textstyle\bigcup}\left\{G^{(N_j)}(\beta_j)\relmiddle|1\leq j\leq s\right\}.
\end{align*}
Then, since
$$\frac{\partial^m\Theta}{\partial y^m}(1,\beta_j)=-G^{(m+1)}(\beta_j)\quad(1\leq j\leq s,\ N_j\leq m\leq N_j+M)$$
by \eqref{eq:Theta and G'} in Section \ref{sec:1}, we see that the algebraic independency of $T_2$ is equivalent to that of
\begin{align*}
T_3:=&\left\{\frac{\partial^{l+m}\Theta}{\partial x^l\partial y^m}(\alpha_i,\beta_j)\relmiddle|1\leq i\leq r,\ 1\leq j\leq s,\ l_0(i)\leq l\leq L,\ N_j\leq m\leq N_j+M\right\}\\
&{\textstyle\bigcup}\left\{F_m^{(l)}(\alpha_i)\relmiddle|1\leq i\leq r,\ 0\leq l\leq L,\ 0\leq m\leq M\right\}\\
&{\textstyle\bigcup}\left\{G^{(m)}(\beta_j)\relmiddle|1\leq j\leq s,\ N_j\leq m\leq N_j+M+1\right\},
\end{align*}
where
$$l_0(i):=\left\{
\begin{aligned}
&1\quad(i=1),\\
&0\quad(2\leq i\leq r).
\end{aligned}
\right.$$
Since $R_k\to\infty$ as $k$ tends to infinity, there exists a sufficiently large integer $k_0$ such that $1-a^{R_k}\beta_j\neq0$ $(1\leq j\leq s)$ for all $k\geq k_0$. We let $\widetilde{R}_k:=R_{k+k_0}$ $(k\geq0)$ and define the functions $\widetilde{\Theta}(x,y)$, $\widetilde{F}_m(x)$ $(0\leq m\leq M)$, and $\widetilde{G}(y)$ corresponding to the linear recurrence $\{\widetilde{R}_k\}_{k\geq0}$ by
$$\widetilde{\Theta}(x,y):=\sum_{k=0}^\infty a^{\widetilde{R}_k}x^k\prod_{\substack{j=0\\j\neq k}}^\infty(1-a^{\widetilde{R}_j}y),$$
$$\widetilde{F}_m(x):=m!\sum_{k=0}^\infty a^{(m+1)\widetilde{R}_k}x^k\quad(0\leq m\leq M),$$
and
$$\widetilde{G}(y):=\prod_{k=0}^\infty (1-a^{\widetilde{R}_k}y).$$

We assert that the algebraic independency of $T_3$ is equivalent to that of
\begin{align*}
S_1:=&\left\{\frac{\partial^{l+m}\widetilde{\Theta}}{\partial x^l\partial y^m}(\alpha_i,\beta_j)\relmiddle|1\leq i\leq r,\ 1\leq j\leq s,\ l_0(i)\leq l\leq L,\ 0\leq m\leq M\right\}\\
&{\textstyle\bigcup}\left\{\widetilde{F}_m^{(l)}(\alpha_i)\relmiddle|1\leq i\leq r,\ 0\leq l\leq L,\ 0\leq m\leq M\right\}\\
&{\textstyle\bigcup}\left\{\widetilde{G}^{(m)}(\beta_j)\relmiddle|1\leq j\leq s,\ 0\leq m\leq M+1\right\}.
\end{align*}
To see this, we prove that $T_3$ modulo $\Qbar$ and $S_1$ modulo $\Qbar$ generate the same $\Qbar$-vector space. First, we show that $F_m^{(l)}(\alpha_i)$ $(1\leq i\leq r,\ 0\leq l\leq L,\ 0\leq m\leq M)$ can be represented as linear combinations of $\widetilde{F}_m^{(l)}(\alpha_i)$ $(1\leq i\leq r,\ 0\leq l\leq L,\ 0\leq m\leq M)$ modulo $\Qbar$. Since
\begin{align*}
F_m(x)&=m!\sum_{k=k_0}^\infty a^{(m+1)R_k}x^k+m!\sum_{k=0}^{k_0-1} a^{(m+1)R_k}x^k\\
&=R(x)\widetilde{F}_m(x)+m!\sum_{k=0}^{k_0-1} a^{(m+1)R_k}x^k\quad(0\leq m\leq M),
\end{align*}
where $R(x):=x^{k_0}$, we have 
$$F_m^{(l)}(\alpha_i)\equiv\sum_{h=0}^{l}\binom lh R^{(l-h)}(\alpha_i)\widetilde{F}_m^{(h)}(\alpha_i)\pmod\Qbar$$
for $1\leq i\leq r$, $0\leq l\leq L$, and $0\leq m\leq M$. Hence we find that
$$
\left(
\begin{array}{c}
F_m(\alpha_i) \\
F_m'(\alpha_i) \\
\vdots \\
F_m^{(L)}(\alpha_i)
\end{array}
\right)\equiv\left(
\begin{array}{cccc}
\alpha_i^{k_0}&&&\\
&\alpha_i^{k_0}&&\hsymb{0}\\
&&\ddots&\\
\hsymb{*}&&&\alpha_i^{k_0}
\end{array}
\right)\left(
\begin{array}{c}
\widetilde{F}_m(\alpha_i) \\
\widetilde{F}_m'(\alpha_i) \\
\vdots \\
\widetilde{F}_m^{(L)}(\alpha_i)
\end{array}
\right)\pmod{\Qbar^{L+1}}
$$
for $1\leq i\leq r$ and $0\leq m\leq M$. Letting
\begin{align*}
\f_{im}&:={}^t(F_m(\alpha_i),F_m'(\alpha_i),\ldots,F_m^{(L)}(\alpha_i))\quad(1\leq i\leq r,\ 0\leq m\leq M),\\
\f&:={}^t({}^t\f_{10},\ldots,{}^t\f_{1M},{}^t\f_{20},\ldots,{}^t\f_{2M},\ldots,{}^t\f_{r0},\ldots,{}^t\f_{rM}),\\
\widetilde{\f}_{im}&:={}^t(\widetilde{F}_m(\alpha_i),\widetilde{F}_m'(\alpha_i),\ldots,\widetilde{F}_m^{(L)}(\alpha_i))\quad(1\leq i\leq r,\ 0\leq m\leq M),\\
\widetilde{\f}&:={}^t({}^t\widetilde{\f}_{10},\ldots,{}^t\widetilde{\f}_{1M},{}^t\widetilde{\f}_{20},\ldots,{}^t\widetilde{\f}_{2M},\ldots,{}^t\widetilde{\f}_{r0},\ldots,{}^t\widetilde{\f}_{rM}),
\end{align*}
$$
A_i:=\left(
\begin{array}{cccc}
\alpha_i^{k_0}&&&\\
&\alpha_i^{k_0}&&\hsymb{0}\\
&&\ddots&\\
\hsymb{*}&&&\alpha_i^{k_0}
\end{array}
\right)\in GL_{L+1}(\Qbar)\quad(1\leq i\leq r),
$$
and
$$
A:=\diag(\underbrace{A_1,\ldots,A_1}_{M+1},\underbrace{A_2,\ldots,A_2}_{M+1},\ldots ,\underbrace{A_r,\ldots,A_r}_{M+1})\in GL_{r(L+1)(M+1)}(\Qbar),
$$
we obtain
$$\f\equiv A\widetilde{\f}\pmod{\Qbar^{r(L+1)(M+1)}}.$$
Secondly, we show that $G^{(m)}(\beta_j)$ $(1\leq j\leq s,\ N_j\leq m\leq N_j+M+1)$ can be represented as linear combinations of $\widetilde{G}^{(m)}(\beta_j)$ $(1\leq j\leq s,\ 0\leq m\leq M+1)$. For $1\leq j\leq s$, we define
$$P_j(y):=(1-\beta_j^{-1}y)^{N_j}\in\Qbar[y],\quad Q_j(y):=\prod_{\substack{k=0\\a^{R_k}\neq\beta_j^{-1}}}^{k_0-1} (1-a^{R_k}y)\in\Qbar[y].$$
Since
$$G(y)=\prod_{k=0}^{k_0-1}(1-a^{R_k}y)\times\prod_{k=k_0}^\infty(1-a^{R_k}y)=P_j(y)Q_j(y)\widetilde{G}(y)\quad(1\leq j\leq s),$$
we see that, for $1\leq j\leq s$ and $0\leq m\leq M+1$,
$$G^{(N_j+m)}(\beta_j)=\sum_{h=0}^{m}\binom {N_j+m}{N_j\quad m-h\quad h} p_jQ_j^{(m-h)}(\beta_j)\widetilde{G}^{(h)}(\beta_j),$$
where $p_j:=P_j^{(N_j)}(y)\in\Qbar^\times$. Then, noting that $q_j:=Q_j(\beta_j)\in\Qbar^\times$ $(1\leq j\leq s)$, we have
$$
\left(
\begin{array}{c}
G^{(N_j)}(\beta_j) \\
G^{(N_j+1)}(\beta_j) \\
\vdots \\
G^{(N_j+M+1)}(\beta_j)
\end{array}
\right)=\left(
\begin{array}{cccc}
p_jq_j&&&\\
&\binom{N_j+1}{N_j}p_jq_j&&\hsymb{0}\\
&&\ddots&\\
\hsymb{*}&&&\binom{N_j+M+1}{N_j}p_jq_j
\end{array}
\right)\left(
\begin{array}{c}
\widetilde{G}(\beta_j) \\
\widetilde{G}'(\beta_j) \\
\vdots \\
\widetilde{G}^{(M+1)}(\beta_j)
\end{array}
\right)
$$
for $1\leq j\leq s$. Letting
\begin{align*}
\g_j&:={}^t(G^{(N_j)}(\beta_j),G^{(N_j+1)}(\beta_j),\ldots,G^{(N_j+M+1)}(\beta_j))\quad(1\leq j\leq s),\\
\g&:={}^t({}^t\g_1,\ldots,{}^t\g_s),\\
\widetilde{\g}_j&:={}^t(\widetilde{G}(\beta_j),\widetilde{G}'(\beta_j),\ldots,\widetilde{G}^{(M+1)}(\beta_j))\quad(1\leq j\leq s),\\
\widetilde{\g}&:={}^t({}^t\widetilde{\g}_1,\ldots,{}^t\widetilde{\g}_s),
\end{align*}
$$
B_j:=\left(
\begin{array}{cccc}
p_jq_j&&&\\
&\binom{N_j+1}{N_j}p_jq_j&&\hsymb{0}\\
&&\ddots&\\
\hsymb{*}&&&\binom{N_j+M+1}{N_j}p_jq_j
\end{array}
\right)\in GL_{M+2}(\Qbar)\quad(1\leq j\leq s),
$$
and
$$B:=\diag(B_1,\ldots,B_s)\in GL_{s(M+2)}(\Qbar),$$
we obtain
$$\g=B\widetilde{\g}.$$
Finally, we show that $\partial^{l+m}\Theta/\partial x^l\partial y^m(\alpha_i,\beta_j)$ $(1\leq i\leq r,\ 1\leq j\leq s,\ l_0(i)\leq l\leq L,\ N_j\leq m\leq N_j+M)$ can be represented as linear combinations of $\partial^{l+m}\widetilde{\Theta}/\partial x^l\partial y^m$ $(\alpha_i,\beta_j)$ $(1\leq i\leq r,\ 1\leq j\leq s,\ l_0(i)\leq l\leq L,\ 0\leq m\leq M)$ and $\widetilde{G}^{(m)}(\beta_j)$ $(1\leq j\leq s,\ 0\leq m\leq M+1)$. For $1\leq j\leq s$, we define
$$U_j(y):=(1-\beta_j^{-1}y)^{\max\{N_j-1,0\}}\in\Qbar[y]$$
and
$$V_j(x,y):=\left(\sum_{k=0}^{k_0-1} a^{R_k}x^k\prod_{\substack{j'=0\\j'\neq k}}^{k_0-1}(1-a^{R_{j'}}y)\right)/U_j(y)\in\Qbar[x,y].$$
It is easy to check that
$$\Theta(x,y)=R(x)P_j(y)Q_j(y)\widetilde{\Theta}(x,y)+U_j(y)V_j(x,y)\widetilde{G}(y)\quad(1\leq j\leq s).$$
Using the fact that $N_j-\max\{N_j-1,0\}=\min\{1,N_j\}\ (1\leq j\leq s)$, we have
\begin{align}\label{eq:liner_comb}
&\frac{\partial^{l+N_j+m}\Theta}{\partial x^l\partial y^{N_j+m}}(\alpha_i,\beta_j)\nonumber\\
=\ &\sum_{h_1=0}^l\binom{l}{h_1}R^{(l-h_1)}(\alpha_i)\sum_{h_2=0}^m\binom{N_j+m}{N_j\quad m-h_2\quad h_2}p_jQ_j^{(m-h_2)}(\beta_j)\frac{\partial^{h_1+h_2}\widetilde{\Theta}}{\partial x^{h_1}\partial y^{h_2}}(\alpha_i,\beta_j)\nonumber\\
&+\sum_{h_3=0}^{m+\min\{1,N_j\}}\left(\binom{N_j+m}{\max\{N_j-1,0\}\quad m+\min\{1,N_j\}-h_3\quad h_3}\right.\nonumber\\
&\left.\quad\quad\quad\quad\quad\quad\quad\quad\times u_j\frac{\partial^{l+m+\min\{1,N_j\}-h_3}V_j}{\partial x^l\partial y^{m+\min\{1,N_j\}-h_3}}(\alpha_i,\beta_j)\widetilde{G}^{(h_3)}(\beta_j)\right)
\end{align}
for $1\leq i\leq r$, $1\leq j\leq s$, $l_0(i)\leq l\leq L,$ and $0\leq m\leq M$, where $ u_j:=U_j^{(\max\{N_j-1,0\})}(y)\in\Qbar^\times$. In particular, when $i=1$, from \eqref{eq:Theta and G'} in Section \ref{sec:1}, \eqref{eq:liner_comb}, and the fact that $\alpha_1=1$, we have
\begin{align}\label{eq:liner_comb2}
&\frac{\partial^{l+N_j+m}\Theta}{\partial x^l\partial y^{N_j+m}}(1,\beta_j)\nonumber\\
=\ &\sum_{h_1=1}^l\binom{l}{h_1}R^{(l-h_1)}(1)\sum_{h_2=0}^m\binom{N_j+m}{N_j\quad m-h_2\quad h_2}p_jQ_j^{(m-h_2)}(\beta_j)\frac{\partial^{h_1+h_2}\widetilde{\Theta}}{\partial x^{h_1}\partial y^{h_2}}(1,\beta_j)\nonumber\\
&+\sum_{h_3=0}^{m+\min\{1,N_j\}}\left(\binom{N_j+m}{\max\{N_j-1,0\}\quad m+\min\{1,N_j\}-h_3\quad h_3}\right.\nonumber\\
&\left.\quad\quad\quad\quad\quad\quad\quad\quad\times u_j\frac{\partial^{l+m+\min\{1,N_j\}-h_3}V_j}{\partial x^l\partial y^{m+\min\{1,N_j\}-h_3}}(1,\beta_j)\widetilde{G}^{(h_3)}(\beta_j)\right)\nonumber\\
&-R^{(l)}(1)\sum_{h_4=0}^m\binom{N_j+m}{N_j\quad m-h_4\quad h_4}p_jQ_j^{(m-h_4)}(\beta_j)\widetilde{G}^{(h_4+1)}(\beta_j)
\end{align}
for $1\leq j\leq s$, $1\leq l\leq L,$ and $0\leq m\leq M$. Let
\begin{align*}
\bm{\theta}_{ijl}&:=\sideset{^t}{}{\mathop{\left(\frac{\partial^{l+N_j}\Theta}{\partial x^l\partial y^{N_j}}(\alpha_i,\beta_j),\frac{\partial^{l+N_j+1}\Theta}{\partial x^l\partial y^{N_j+1}}(\alpha_i,\beta_j),\ldots,\frac{\partial^{l+N_j+M}\Theta}{\partial x^l\partial y^{N_j+M}}(\alpha_i,\beta_j)\right)}},\\
\widetilde{\bm{\theta}}_{ijl}&:=\sideset{^t}{}{\mathop{\left(\frac{\partial^l\widetilde{\Theta}}{\partial x^l}(\alpha_i,\beta_j),\frac{\partial^{l+1}\widetilde{\Theta}}{\partial x^l\partial y}(\alpha_i,\beta_j),\ldots,\frac{\partial^{l+M}\widetilde{\Theta}}{\partial x^l\partial y^M}(\alpha_i,\beta_j)\right)}}\\
&\quad\quad\quad\quad\quad\quad\quad\quad\quad\quad\quad\quad\quad\quad(1\leq i\leq r,\ 1\leq j\leq s,\ l_0(i)\leq l\leq L)
\end{align*}
and let
\begin{align*}
\bm{\theta}_{ij}:={}^t({}^t\bm{\theta}_{ij\,l_0(i)},{}^t\bm{\theta}_{ij\,l_0(i)+1},\ldots,{}^t\bm{\theta}_{ijL}),\quad\widetilde{\bm{\theta}}_{ij}:={}^t({}^t\widetilde{\bm{\theta}}_{ij\,l_0(i)},{}^t\widetilde{\bm{\theta}}_{ij\,l_0(i)+1},\ldots,{}^t\widetilde{\bm{\theta}}_{ijL})\\
\quad(1\leq i\leq r,\ 1\leq j\leq s).
\end{align*}
Then, from \eqref{eq:liner_comb} and \eqref{eq:liner_comb2}, we have
$$\bm{\theta}_{ijl}=\sum_{h=l_0(i)}^l\binom{l}{h}R^{(l-h)}(\alpha_i)E_j\widetilde{\bm{\theta}}_{ijh}+D_{ijl}\widetilde{\g}_j\quad(1\leq i\leq r,\ 1\leq j\leq s,\ l_0(i)\leq l\leq L),$$
where
$$
E_j:=\left(
\begin{array}{cccc}
p_jq_j&&&\\
&\binom{N_j+1}{N_j}p_jq_j&&\hsymb{0}\\
&&\ddots&\\
\hsymb{*}&&&\binom{N_j+M}{N_j}p_jq_j
\end{array}
\right)\in GL_{M+1}(\Qbar)
$$
and $D_{ijl}\in M_{M+1,M+2}(\Qbar)$, so that we have
$$\bm{\theta}_{ij}=C_{ij}\widetilde{\bm{\theta}}_{ij}+D_{ij}\widetilde{\g}_j\quad(1\leq i\leq r,\ 1\leq j\leq s),$$
where
$$
C_{ij}:=\left(
\begin{array}{cccc}
\alpha_i^{k_0}E_j&&&\\
&\alpha_i^{k_0}E_j&&\hsymb{0}\\
&&\ddots&\\
\hsymb{*}&&&\alpha_i^{k_0}E_j
\end{array}
\right)\in GL_{(L+1-l_0(i))(M+1)}(\Qbar)
$$
and 
$$D_{ij}:=\left(
\begin{array}{c}
D_{ij\,l_0(i)}\\
D_{ij\,l_0(i)+1}\\
\vdots\\
D_{ijL}
\end{array}
\right)\in M_{(L+1-l_0(i))(M+1),M+2}(\Qbar).$$
Letting
\begin{align*}
\bm{\theta}&:={}^t({}^t\bm{\theta}_{11},\ldots,{}^t\bm{\theta}_{1s},{}^t\bm{\theta}_{21},\ldots,{}^t\bm{\theta}_{2s},\ldots,{}^t\bm{\theta}_{r1},\ldots,{}^t\bm{\theta}_{rs}),\\
\widetilde{\bm{\theta}}&:={}^t({}^t\widetilde{\bm{\theta}}_{11},\ldots,{}^t\widetilde{\bm{\theta}}_{1s},{}^t\widetilde{\bm{\theta}}_{21},\ldots,{}^t\widetilde{\bm{\theta}}_{2s},\ldots,{}^t\widetilde{\bm{\theta}}_{r1},\ldots,{}^t\widetilde{\bm{\theta}}_{rs}),\\
C&:=\diag(C_{11},\ldots,C_{1s},C_{21},\ldots,C_{2s},\ldots,C_{r1},\ldots,C_{rs})\\
&\ \in GL_{sL(M+1)+(r-1)s(L+1)(M+1)}(\Qbar),
\end{align*}
and
$$D:=\left(
\begin{array}{c}
\diag(D_{11},\ldots,D_{1s})\\
\diag(D_{21},\ldots,D_{2s})\\
\vdots\\
\diag(D_{r1},\ldots,D_{rs})
\end{array}
\right)\in M_{sL(M+1)+(r-1)s(L+1)(M+1),s(M+2)}(\Qbar),
$$
we obtain
$$\bm{\theta}=C\widetilde{\bm{\theta}}+D\widetilde{\g}.$$
Therefore we have
$$
\left(
\begin{array}{c}
\f\\
\g\\
\bm{\theta}
\end{array}
\right)\equiv\left(
\begin{array}{ccc}
A&0&0\\
0&B&0\\
0&D&C
\end{array}
\right)\left(
\begin{array}{c}
\widetilde{\f}\\
\widetilde{\g}\\
\widetilde{\bm{\theta}}
\end{array}
\right)\pmod{\Qbar^N},
$$
where $N:=r(L+1)(M+1)+s(M+2)+sL(M+1)+(r-1)s(L+1)(M+1)$. This implies the assertion since the coefficient matrix of the right-hand side is a lower triangular matrix with entries in $\Qbar$ whose diagonal entries are nonzero. 

Since
$$\widetilde{G}^{(m+1)}(\beta_j)=-\frac{\partial^m\widetilde{\Theta}}{\partial y^m}(1,\beta_j)\quad(1\leq j\leq s,\ 0\leq m\leq M)$$
by \eqref{eq:Theta and G'} in Section \ref{sec:1}, we see that the algebraic independency of $S_1$ is equivalent to that of
\begin{align*}
S_2:=&\left\{\frac{\partial^{l+m}\widetilde{\Theta}}{\partial x^l\partial y^m}(\alpha_i,\beta_j)\relmiddle|1\leq i\leq r,\ 1\leq j\leq s,\ 0\leq l\leq L,\ 0\leq m\leq M\right\}\\
&{\textstyle\bigcup}\left\{\widetilde{F}_m^{(l)}(\alpha_i)\relmiddle|1\leq i\leq r,\ 0\leq l\leq L,\ 0\leq m\leq M\right\}\\
&{\textstyle\bigcup}\left\{\widetilde{G}(\beta_j)\relmiddle|1\leq j\leq s\right\}.
\end{align*}
Then we see by \eqref{eq:Theta_H} and \eqref{eq:H_Theta} that the algebraic independency of $S_2$ is equivalent to that of
\begin{align*}
S_3:=&\left\{\frac{\partial^{l+m}\widetilde{H}}{\partial x^l\partial y^m}(\alpha_i,\beta_j)\relmiddle|1\leq i\leq r,\ 1\leq j\leq s,\ 0\leq l\leq L,\ 0\leq m\leq M\right\}\\
&{\textstyle\bigcup}\left\{\widetilde{F}_m^{(l)}(\alpha_i)\relmiddle|1\leq i\leq r,\ 0\leq l\leq L,\ 0\leq m\leq M\right\}\\
&{\textstyle\bigcup}\left\{\widetilde{G}(\beta_j)\relmiddle|1\leq j\leq s\right\},
\end{align*}
where
$$\widetilde{H}(x,y):=\sum_{k=0}^\infty\frac{a^{\widetilde{R}_k}x^k}{1-a^{\widetilde{R}_k}y}.$$
This concludes the proof since Theorem \ref{thm:main2} for the linear recurrence $\{\widetilde{R}_k\}_{k\geq0}$ asserts that $S_3$ is algebraically independent over $\Q$.
\end{proof}

\section{Mahler functions of several variables}\label{sec:3}
\subsection{Multiplicative transformation $\Omega$}\label{sec:3.1}
We denote by $F(z_1,\ldots,z_n)$ and by $F[[z_1,\ldots,z_n]]$ the field of rational functions and the ring of formal power series in the variables $z_1,\ldots,z_n$ with coefficients in a field $F$, respectively, and by $F^\times$ the multiplicative group of nonzero elements of $F$.

Let $p$ be $\infty$ or a prime number. Let $\Omega=(\omega_{ij})$ be an $n\times n$ matrix with nonnegative integer entries. Then the maximum $\rho$ of the archimedean absolute values of the eigenvalues of $\Omega$ is itself an eigenvalue of $\Omega$ (cf. Gantmacher \cite[p. 66]{Gant}). We define a multiplicative transformation $\Omega:\C_p^n\to\C_p^n$ by
\begin{equation}\label{eq:Omegaz}
\Omega\z:=\left(\prod_{j=1}^nz_j^{\omega_{1j}},\prod_{j=1}^nz_j^{\omega_{2j}},\ldots,\prod_{j=1}^nz_j^{\omega_{nj}}\right)
\end{equation}
for any $\z=(z_1,\ldots,z_n)\in\C_p^n$. Then the iterates $\Omega^k\z$ $(k=0,1,2,\ldots)$ are well-defined. Let $\ba=(\alpha_1,\ldots,\alpha_n)$ be a point with $\alpha_1,\ldots,\alpha_n$ nonzero algebraic numbers. We consider the following four conditions on $\Omega$ and $\ba$.
\begin{enumerate}
\setlength{\leftskip}{3mm}
\renewcommand{\labelenumi}{(\Roman{enumi})}
\item $\Omega$ is nonsingular and none of its eigenvalues is a root of unity, so that $\rho>1$.
\item Every entry of the matrix $\Omega^k$ is $O(\rho^k)$ as $k$ tends to infinity.
\renewcommand{\labelenumi}{(\Roman{enumi})$_p$}
\item If we put $\Omega^k\ba=(\alpha_1^{(k)},\ldots ,\alpha_n^{(k)})$, then
$$
\log|\alpha_i^{(k)}|_p\leq-c\rho^k\quad(1\leq i\leq n) 
$$
for all sufficiently large $k$, where $c$ is a positive constant.
\end{enumerate}
In the case where $p$ is $\infty$, the last condition is the following
\begin{enumerate} 
\renewcommand{\labelenumi}{(\Roman{enumi})$_\infty$}
\setlength{\leftskip}{4.5mm}
\setcounter{enumi}{3}
\item For any nonzero $f(\z)\in\C[[z_1,\ldots,z_n]]$ which converges in some neighborhood of the origin of $\C^n$, there are infinitely many positive integers $k$ such that $f(\Omega^k\ba)\neq0$.
\end{enumerate}
On the other hand, in the case where $p$ is a prime number, the last condition is the following
\begin{enumerate}
\setlength{\leftskip}{3mm}
\renewcommand{\labelenumi}{(\Roman{enumi})$_p$}
\setcounter{enumi}{3}
\item For any nonzero $f(\z)\in\C_p[[z_1,\ldots,z_n]]$ which converges in some neighborhood of the origin of $\C_p^n$ and for any positive integer $a$, there are infinitely many positive integers $k$ such that $f(\Omega^{ak}\ba)\neq0$. 
\end{enumerate}

\subsection{Vanishing theorems}\label{sec:3.2}
In the case where $p$ is $\infty$, the condition (IV)$_\infty$ stated above has been studied by Mahler, Kubota, Loxton and van der Poorten, and Masser. The following lemma is Masser's vanishing theorem.

\begin{lem}[Masser \cite{Masser}]\label{lem:Masser}
Let $p$ be $\infty$ and $\Omega$ an $n\times n$ matrix with nonnegative integer entries satisfying the condition {\rm(I)}. Let $\ba$ be an $n$-dimensional vector whose components $\alpha_1,\ldots,\alpha_n$ are nonzero algebraic numbers such that $\Omega^k\ba\to(0,\ldots,0)$ in $\C^n$ as $k$ tends to infinity. Then the negation of the condition {\rm(IV)}$_\infty$ is equivalent to the following: There exist integers $i_1,\ldots,i_n$, not all zero, and positive integers $a,b$ such that
$$
(\alpha_1^{(k)})^{i_1}\cdots(\alpha_n^{(k)})^{i_n}=1
$$
for all $k=a+lb$ $(l=0,1,2,\ldots)$.
\end{lem}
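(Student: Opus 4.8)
The statement is an equivalence between the failure of (IV)$_\infty$ and the existence of a multiplicative relation along an arithmetic progression, so the plan is to treat the two implications separately. First I would unwind the negation of (IV)$_\infty$: it asserts that there is a nonzero $f(\z)\in\C[[z_1,\ldots,z_n]]$, convergent near the origin, with $f(\Omega^k\ba)=0$ for all sufficiently large $k$. Throughout I will use that $(\Omega\z)^{\bm m}=\z^{\Omega^t\bm m}$, so that monomial substitution corresponds to the transpose action on exponent vectors, and that every coordinate $\alpha_i^{(k)}$ of $\Omega^k\ba$ is a nonzero product of powers of the $\alpha_j$.

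For the implication ``relation $\Rightarrow$ negation of (IV)$_\infty$'', suppose $(\alpha_1^{(k)})^{i_1}\cdots(\alpha_n^{(k)})^{i_n}=1$ for all $k=a+lb$. Writing $\bm i=(i_1,\ldots,i_n)=\bm i^+-\bm i^-$ with $\bm i^\pm\in\Z_{\geq0}^n$ of disjoint support, set $P(\z):=\z^{\bm i^+}-\z^{\bm i^-}$, a nonzero polynomial vanishing at $\Omega^k\ba$ whenever $k\equiv a\pmod b$ and $k\geq a$ (since there the relation says $(\Omega^k\ba)^{\bm i}=1$ and all coordinates are nonzero). To manufacture a single witness vanishing along the entire tail of the orbit I would take $f(\z):=\prod_{r=0}^{b-1}P(\Omega^r\z)$. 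Each factor equals $\z^{(\Omega^r)^t\bm i^+}-\z^{(\Omega^r)^t\bm i^-}$, a difference of two distinct monomials because $\Omega$, hence $(\Omega^r)^t$, is nonsingular by (I), so each factor is a nonzero polynomial and $f$ is a nonzero polynomial. Since $f(\Omega^k\ba)=\prod_{r=0}^{b-1}P(\Omega^{k+r}\ba)$ and among any $b$ consecutive indices exactly one is $\equiv a\pmod b$, we obtain $f(\Omega^k\ba)=0$ for every $k\geq a$, which is the negation of (IV)$_\infty$.

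The substantial direction is the converse. Given such an $f$, I would study the scalar sequence $u_k:=f(\Omega^k\ba)$ through the sizes of its monomial terms: with $\ell:=(\log|\alpha_1|,\ldots,\log|\alpha_n|)$ one has $\log|(\Omega^k\ba)^{\bm m}|=\langle\bm m,\Omega^k\ell\rangle$, and these tend to $-\infty$ as $\Omega^k\ba\to0$. The decisive structural fact is that $\bm m^t\Omega^k$ is a vector-valued linear recurrence whose characteristic roots are the eigenvalues of $\Omega$, the real Perron root $\rho>1$ dominating; consequently the relative magnitudes of the monomials along the orbit are governed to leading order by $\rho^k\langle\bm m,u\rangle$ for a fixed vector $u$. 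After reducing to finitely many exponents (only finitely many can ever be dominant along the orbit, the remainder being absorbed by convergence against the growth rate $\rho^k$), the relation $u_k=0$ forces the monomials of maximal growth to cancel among themselves. I would then argue by induction on $n$ and on the number of surviving monomials: from a pair $\bm m,\bm m'$ whose magnitudes remain comparable one extracts that $(\Omega^k\ba)^{\bm m-\bm m'}$ has bounded logarithm, and exact cancellation of the sum should pin this ratio to a constant that is eventually $1$, yielding the relation with $\bm i=\bm m-\bm m'\neq0$.

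The hard part will be converting this asymptotic ``comparable size plus cancellation'' into an exact identity holding on an arithmetic progression. Three features make it delicate: the support of $f$ may be infinite, so the truncation to dominant monomials must be justified quantitatively; several eigenvalues may share modulus $\rho$ or $\rho$ may fail to be simple, so the dominant group carries polynomial-in-$k$ weights and rotating phases, and its internal cancellation must be analyzed through the full exponential-polynomial shape of $\bm m^t\Omega^k$; and the periodic alignment of these phases is exactly what forces the passage from all large $k$ to a subprogression $k=a+lb$, obtained by replacing $\Omega$ with a suitable power $\Omega^b$. Here the hypothesis (I) that no eigenvalue of $\Omega$ is a root of unity is essential, both to prevent cancellations among distinct monomials that persist for all $k$ without reflecting a genuine multiplicative relation and to invoke a Skolem--Mahler--Lech-type description of the zero set of the underlying recurrence. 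This analytic core is the whole content of Masser's theorem, and I expect it to absorb essentially all of the effort.
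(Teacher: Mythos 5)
The paper offers no proof of this lemma to compare against: it is quoted as Masser's vanishing theorem directly from \cite{Masser} and used later as a black box (in the proof of Theorem \ref{thm:indep_values}). So the only question is whether your proposal itself constitutes a proof. Your first implication (relation on an arithmetic progression $\Rightarrow$ negation of (IV)$_\infty$) is correct and complete: writing $\bm{i}=\bm{i}^{+}-\bm{i}^{-}$ and taking $f(\z)=\prod_{r=0}^{b-1}\bigl(\z^{(\Omega^{r})^{t}\bm{i}^{+}}-\z^{(\Omega^{r})^{t}\bm{i}^{-}}\bigr)$ does give a nonzero polynomial (each factor is a difference of distinct monomials since $\Omega$ is nonsingular) vanishing at $\Omega^{k}\ba$ for all $k\geq a$.

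The converse, however, is not proved; it is a heuristic outline, and that direction is the entire content of Masser's theorem. Concretely: (a) the truncation to ``finitely many dominant exponents'' is unjustified for a power series with infinite support, whose coefficients may grow and whose monomials' relative sizes reorder as $k$ varies; (b) exact vanishing of a sum of several comparably-sized dominant terms does not ``pin a ratio to a constant that is eventually $1$'' --- with three or more dominant monomials cancellation imposes no constraint on any single ratio, and even with exactly two one only gets $(\Omega^{k}\ba)^{\bm{m}-\bm{m}'}\to -c_{\bm{m}'}/c_{\bm{m}}$, an asymptotic statement about a constant that need not be $1$, whereas the conclusion requires an exact identity on an arithmetic progression; (c) Skolem--Mahler--Lech does not apply to $k\mapsto f(\Omega^{k}\ba)$, because $(\Omega^{k}\ba)^{\bm{m}}=\ba^{(\Omega^{t})^{k}\bm{m}}$ is the exponential of a linear recurrence, not itself a linear recurrence sequence, so the zero set of $u_k$ is not covered by that theorem (the paper's Lemma \ref{lem:slm} is used elsewhere, for genuinely exponential-polynomial sequences). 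Passing from the asymptotic multiplicative statement to the exact relation, handling non-simple or equimodular eigenvalues, and extracting the progression $k=a+lb$ is precisely the hard analytic/arithmetic core of \cite{Masser}, and none of it is reconstructed here; as you yourself note, it would ``absorb essentially all of the effort,'' which is to say the proof is missing where it matters.
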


On the other hand, in the case where $p$ is a prime number, Masser's vanishing theorem is unsolved. However, the following lemma, which is the $p$-adic analogue of Mahler's vanishing theorem \cite{Mahler}, can be proved in the same way as in the proof of Theorem 2.2 in Nishioka \cite{N}.

\begin{lem}\label{lem:Mahler}
Let $p$ be a prime number and $\Omega$ an $n\times n$ matrix with nonnegative integer entries. Suppose that the characteristic polynomial of $\Omega$ is irreducible over $\Q$ and that $\Omega$ has an eigenvalue $\rho>1$ which is greater than the archimedean absolute values of any other eigenvalues. We denote by $A_{ij}$ the $(i,j)$-cofactor of the matrix $\Omega-\rho E$, where $E$ is the identity matrix. Then $A_{i1}\neq0$ $(1\leq i\leq n)$. Moreover, if nonzero algebraic numbers $\alpha_1,\ldots,\alpha_n$ satisfy
$$
\sum_{i=1}^n|A_{i1}|_\infty\log|\alpha_i|_p<0,
$$
then the matrix $\Omega$ and the point $\ba=(\alpha_1,\ldots,\alpha_n)$ satisfy the conditions {\rm(I)}, {\rm(II)}, {\rm(III)}$_p$, and {\rm(IV)}$_p$.
\end{lem}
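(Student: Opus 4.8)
The plan is to establish the four conditions one by one, first extracting the spectral structure of $\Omega$ forced by the hypotheses and then reading off the behaviour of the orbit $\{\Omega^k\ba\}$.

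First I would record the spectral structure and deduce $A_{i1}\neq0$. Since the characteristic polynomial $\Phi$ of $\Omega$ is irreducible over $\Q$ and $\mathrm{char}\,\Q=0$, $\Phi$ is separable, so its roots $\rho=\rho_1,\ldots,\rho_n$ are distinct and $\rho$ is a simple eigenvalue. Hence $\mathrm{rank}(\Omega-\rho E)=n-1$ and the adjugate $\mathrm{adj}(\Omega-\rho E)$ has rank $1$; writing it as $\mathbf v\mathbf w^{t}$, its columns are right eigenvectors and its rows left eigenvectors of $\Omega$ for $\rho$. Because $\Phi$ is irreducible, no permutation can block‑triangularize $\Omega$ (such a form would factor $\Phi$ over $\Q$), so $\Omega$ is an irreducible nonnegative matrix with Perron root $\rho$; by Perron--Frobenius the vectors $\mathbf v,\mathbf w$ may be taken strictly positive and are unique up to scaling, so every entry of $\mathbf v\mathbf w^{t}$ is nonzero. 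Since the $(1,i)$ entry of $\mathrm{adj}(\Omega-\rho E)$ is exactly $A_{i1}$, this yields $A_{i1}\neq0$ for all $i$; moreover its first row $(A_{11},\ldots,A_{n1})$ is a left eigenvector, hence a scalar multiple of the positive vector $\mathbf w$, so the $A_{i1}$ all share one sign and $|A_{i1}|$ is, up to a common positive factor, the $i$-th entry of $\mathbf w$.

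Next I would check (I) and (II), which concern $\Omega$ alone. For (I): $0$ is not a root of the irreducible $\Phi$ (otherwise $\Phi(X)=X$ and $\rho=0$), so $\det\Omega\neq0$; and no eigenvalue is a root of unity, since the conjugates of a root of unity all have archimedean absolute value $1$, whereas $\rho>1$ is one of the mutually conjugate roots of $\Phi$. For (II): the eigenvalues are distinct, so $\Omega$ is diagonalizable, and writing $\Omega^{k}=\rho^{k}\mathbf v\mathbf w^{t}+\sum_{j\ge2}\rho_j^{k}P_j$ with $|\rho_j|<\rho$ shows every entry is $O(\rho^{k})$.

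For (III)$_p$ I would use the explicit multiplicative action \eqref{eq:Omegaz}, which gives $\log|\alpha_i^{(k)}|_p=\sum_{j=1}^n(\Omega^{k})_{ij}\log|\alpha_j|_p$. Substituting $(\Omega^{k})_{ij}=\rho^{k}v_iw_j+O(\rho'^{k})$, where $\rho':=\max_{j\ge2}|\rho_j|<\rho$, gives $\log|\alpha_i^{(k)}|_p=\rho^{k}v_iS+O(\rho'^{k})$ with $S=\sum_j w_j\log|\alpha_j|_p$. After normalizing $\mathbf w>0$, so that $w_j$ is a positive multiple of $|A_{j1}|$, the hypothesis $\sum_i|A_{i1}|_\infty\log|\alpha_i|_p<0$ says precisely $S<0$; as $v_i>0$, the leading coefficient $v_iS$ is negative for every $i$, and absorbing the $O(\rho'^{k})$ term for large $k$ yields $\log|\alpha_i^{(k)}|_p\le-c\rho^{k}$ with $c>0$. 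This is exactly where the first part of the lemma is used: it guarantees that the $|A_{i1}|$ are the genuine nonzero, common‑sign weights of the dominant eigendirection.

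The main obstacle will be (IV)$_p$, the $p$-adic vanishing statement, for which I would follow the proof of Theorem 2.2 in Nishioka \cite{N}. The plan is to argue by contradiction: if $f(\Omega^{ak}\ba)=0$ for all large $k$ for some nonzero convergent $f$, one tracks the $p$-adic valuations of the monomials of $f$ along the orbit. Condition (III)$_p$ controls these valuations (each coordinate decays like $\rho^k$), while (I), (II) and the irreducibility of $\Phi$ prevent the persistent multiplicative relations among $\alpha_1^{(k)},\ldots,\alpha_n^{(k)}$ that would otherwise allow distinct monomials to cancel along an arithmetic progression of $k$. One isolates the monomial of least valuation, shows it cannot be cancelled, and concludes that its coefficient vanishes, contradicting $f\neq0$. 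The delicate point, exactly as in Nishioka's argument, is handling ties between the valuations of different monomials; this is resolved using that no eigenvalue of $\Omega$ is a root of unity, so that the valuation vectors $\Omega^{k}(\log|\alpha_j|_p)_j$ are not trapped in a proper rational subspace.
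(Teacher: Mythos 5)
Your overall route coincides with the paper's: the paper gives no proof of this lemma at all beyond the remark that it ``can be proved in the same way as in the proof of Theorem 2.2 in Nishioka \cite{N}'', and for the only genuinely hard point, condition (IV)$_p$, your plan is precisely that deferral. The parts you prove explicitly are correct: irreducibility of the characteristic polynomial over $\Q$ rules out any permutation block-triangularization (which would factor the polynomial over $\Z$), so $\Omega$ is irreducible in the Perron--Frobenius sense; hence $\mathrm{adj}(\Omega-\rho E)$ is a nonzero scalar multiple of $\mathbf{v}\mathbf{w}^{t}$ with $\mathbf{v},\mathbf{w}$ strictly positive, which gives $A_{i1}\neq0$ and the common sign; conditions (I) and (II) follow from irreducibility and separability as you say; and the spectral computation $\log|\alpha_i^{(k)}|_p=\rho^{k}v_iS+O(\rho'^{k})$ with $S<0$ correctly yields (III)$_p$.

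One correction to your sketch of (IV)$_p$, however: you locate the ``delicate point'' in the wrong hypothesis. The absence of root-of-unity eigenvalues does not by itself prevent distinct monomials from cancelling along the orbit; for instance $\Omega=\mathrm{diag}(2,2)$ has no root-of-unity eigenvalue, yet with $\alpha_1=\alpha_2$ the nonzero function $f(z_1,z_2)=z_1-z_2$ vanishes at every $\Omega^{k}\ba$. What the Mahler--Nishioka argument actually uses is that irreducibility of the characteristic polynomial forces the entries $v_1,\ldots,v_n$ of the Perron eigenvector to be linearly independent over $\Q$: a rational relation $\mathbf{q}^{t}\mathbf{v}=0$ would be inherited, under each embedding $\rho\mapsto\rho_j$, by the conjugate eigenvectors, which form a basis of $\C^{n}$, forcing $\mathbf{q}=0$. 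Consequently $\bm{\eta}\mapsto\mathbf{v}\cdot\bm{\eta}$ is injective on $\Z^{n}$, so among the exponents occurring in $f$ there is a unique one minimizing $\mathbf{v}\cdot\bm{\eta}$, and estimates of the type used for (III)$_p$ (together with the geometric bound on the coefficients of the convergent series) show that this monomial strictly dominates the sum of all the others for large $k$. This yields $f(\Omega^{k}\ba)\neq0$ for \emph{all} sufficiently large $k$, which is stronger than (IV)$_p$ and handles every arithmetic progression $ak$ at once. With that substitution your outline is exactly the standard proof the paper points to.
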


\subsection{Criterion for algebraic independence}\label{sec:3.3}
Mahler functions of several variables are analytic functions which satisfy certain types of functional equations under the transformation $\z\mapsto\Omega\z$ defined by \eqref{eq:Omegaz}. Kubota \cite{Kubota} studied Mahler functions $g_1(\z),\ldots,g_m(\z)$ satisfying respective functional equations
$$
\left(
\begin{array}{c}
g_1(\z)\\
\vdots \\
g_m(\z)
\end{array}
\right)
=\left(
\begin{array}{ccc}
e_1(\z)&&0\\
&\ddots&\\
0&&e_m(\z)
\end{array}
\right)\left(
\begin{array}{c}
g_1(\Omega\z)\\
\vdots \\
g_m(\Omega\z)
\end{array}
\right)+\left(
\begin{array}{c}
b_1(\z)\\
\vdots \\
b_m(\z)
\end{array}
\right),
$$
where $e_h(\z)$, $b_h(\z)\in\Qbar(z_1,\ldots,z_n)$ $(1\leq h\leq m)$, and established a criterion for the algebraic independence of their values as well as that of the functions themselves (see also Nishioka \cite{N}). On the other hand, Nishioka \cite{N1996} studied Mahler functions $f_{ij}(\z)$ $(1\leq i\leq l,\ 1\leq j\leq n(i))$ satisfying a system of functional equations
$$
\left(
\begin{array}{c}
\f_1(\z)\\
\vdots \\
\f_l(\z)
\end{array}
\right)
=\left(
\begin{array}{ccc}
A_1&&0\\
&\ddots&\\
0&&A_l
\end{array}
\right)\left(
\begin{array}{c}
\f_1(\Omega\z)\\
\vdots \\
\f_l(\Omega\z)
\end{array}
\right)+\left(
\begin{array}{c}
\bb_1(\z)\\
\vdots \\
\bb_l(\z)
\end{array}
\right),
$$
where 
\begin{equation}\label{eq:f_i}
\f_i(\z)={}^t(f_{i1}(\z),\ldots,f_{in(i)}(\z))\quad(1\leq i\leq l),
\end{equation} 
\begin{equation}\label{eq:A_i}
A_i=\left(
\begin{array}{cccc}
a_i&&&\\
a_{21}^{(i)}&a_i&&\hsymb{0}\\
\vdots&&\ddots&\\
a_{n(i)1}^{(i)}&\cdots&a_{n(i)\,n(i)-1}^{(i)}&a_i
\end{array}
\right)\in GL_{n(i)}(\Qbar),\quad a_i\neq0,\quad a_{s\,s-1}^{(i)}\neq0,
\end{equation}
and 
\begin{equation}\label{eq:bb_i}
\bb_i(\z)={}^t(b_{i1}(\z),\ldots,b_{in(i)}(\z))\in\Qbar(z_1,\ldots,z_n)^{n(i)}\quad(1\leq i\leq l),
\end{equation} 
and established a criterion for the algebraic independence of their values as well as that of the functions themselves.

In order to prove Theorem \ref{thm:main2}, we need the following criterion for the algebraic independence of the values of Mahler functions, which includes Nishioka's and a special case of Kubota's criteria. In what follows, we call a subfield $K$ of $\Qbar$ a number field if $K$ is a finite extension of $\Q$.

\begin{thm}\label{thm:criterion}
Let $p$ be $\infty$ or a prime number, $K$ a number field, and $\Omega$ an $n\times n$ matrix with nonnegative integer entries. Let $f_{ij}(\z),\ g_h(\z)\in K[[z_1,\ldots,z_n]]$ $(1\leq i\leq l,\ 1\leq j\leq n(i),\ 1\leq h\leq m)$ with $g_h(\bm{0})\neq0$ $(1\leq h\leq m)$ converge in an $n$-polydisc $U$ around the origin of $\C_p^n$. Suppose that they satisfy the system of functional equations
$$
\left(
\begin{array}{c}
\f_1(\z)\\
\vdots \\
\f_l(\z)\\
g_1(\z)\\
\vdots \\
g_m(\z)
\end{array}
\right)
=\left(
\begin{array}{ccc|ccc}
A_1&&0&&&\\
&\ddots&&&\hsymb{0}&\\
0&&A_l&&&\\\hline
&&&e_1(\z)&&0\\
&\hsymb{0}&&&\ddots&\\
&&&0&&e_m(\z)
\end{array}
\right)\left(
\begin{array}{c}
\f_1(\Omega\z)\\
\vdots \\
\f_l(\Omega\z)\\
g_1(\Omega\z)\\
\vdots \\
g_m(\Omega\z)
\end{array}
\right)+\left(
\begin{array}{c}
\bb_1(\z)\\
\vdots \\
\bb_l(\z)\\
0\\
\vdots \\
0
\end{array}
\right),
$$
where $\f_i(\z)$, $A_i\in GL_{n(i)}(\Qbar)$, and $\bb_i(\z)\in\Qbar(z_1,\ldots,z_n)^{n(i)}$ $(1\leq i\leq l)$ are as in \eqref{eq:f_i}, \eqref{eq:A_i}, and \eqref{eq:bb_i}, respectively, and $e_h(\z)\in\Qbar(z_1,\ldots,z_n)$ $(1\leq h\leq m)$. Let $\ba=(\alpha_1,\ldots,\alpha_n)$ be a point in $U$ whose components are nonzero algebraic numbers. Assume that $\Omega$ and $\ba$ satisfy the conditions {\rm(I)}, {\rm(II)}, {\rm(III)}$_p$, and {\rm(IV)}$_p$. Assume further that $b_{ij}(\Omega^k\ba)$ $(1\leq i\leq l,\ 1\leq j\leq n(i))$ and $e_h(\Omega^k\ba)$ $(1\leq h\leq m)$ are defined and $e_h(\Omega^k\ba)\neq0$ $(1\leq h\leq m)$ for all $k\geq0$.

Then, if the numbers $f_{ij}(\ba)$ $(1\leq i\leq l,\ 1\leq j\leq n(i))$ and $g_h(\ba)$ $(1\leq h\leq m)$ of $\Qbar_p$ are algebraically dependent over $\Q$, then at least one of the following two conditions holds:
\begin{enumerate}
\item There exist a nonempty subset $\{i_1,\ldots,i_r\}$ of $\{1,\ldots,l\}$ and nonzero algebraic numbers $c_1,\ldots,c_r$ such that
$$a_{i_1}=\cdots=a_{i_r}$$
and
$$R(\z):=c_1f_{i_11}(\z)+\cdots+c_rf_{i_r1}(\z)\in\Qbar(z_1,\ldots,z_n).$$
Here $R(\z)$ satisfies the functional equation
$$R(\z)=a_{i_1}R(\Omega\z)+c_1b_{i_11}(\z)+\cdots +c_rb_{i_r1}(\z).$$
\item There exist integers $d_1,\ldots,d_m$, not all zero, and $S(\bm{z})\in\Qbar(z_1,\ldots,z_n)^\times$ such that
$$S(\z)=S(\Omega\z)\prod_{h=1}^me_h(\z)^{d_h}.$$
\end{enumerate}
\end{thm}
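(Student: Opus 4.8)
The plan is to split the argument into an analytic transcendence step, which passes from the assumed algebraic dependence of the values to algebraic dependence of the functions over $\Qbar(z_1,\ldots,z_n)$, followed by a purely algebraic step in which the functional equations are used to extract the structural alternatives (i) and (ii).

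For the first step I would establish the contrapositive of the basic transcendence theorem of Mahler's method: under the hypotheses {\rm(I)}, {\rm(II)}, {\rm(III)}$_p$, and {\rm(IV)}$_p$, if the functions $f_{ij}(\z)$ $(1\le i\le l,\ 1\le j\le n(i))$ and $g_h(\z)$ $(1\le h\le m)$ are algebraically independent over $\Qbar(z_1,\ldots,z_n)$, then their values at $\ba$ are algebraically independent over $\Q$; equivalently, algebraic dependence of the values over $\Q$ forces algebraic dependence of the functions over $\Qbar(z_1,\ldots,z_n)$. This is the analytic core of the proof. One constructs an auxiliary polynomial in the functions, with undetermined coefficients chosen so that the resulting power series vanishes to high order at the origin, transports this vanishing along the orbit $\{\Omega^k\ba\}_{k\ge0}$ by iterating the functional equation, and invokes the vanishing condition {\rm(IV)}$_p$ to guarantee that the auxiliary function is nonzero at infinitely many $\Omega^k\ba$; the growth estimate {\rm(II)} and the contraction estimate {\rm(III)}$_p$, together with the nonsingularity {\rm(I)}, then make the quantitative comparison between the order of vanishing at the origin and the archimedean or $p$-adic size of the values go through. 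I would treat $p=\infty$ and $p$ prime uniformly, the vanishing input being Lemma \ref{lem:Masser} in the former case and Lemma \ref{lem:Mahler} in the latter.

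The second step is algebraic, and here I would argue according to the multiplicative block. If $g_1(\z),\ldots,g_m(\z)$ are themselves algebraically dependent over $\Qbar(z_1,\ldots,z_n)$, then, since each $g_h$ is a unit in $\Qbar[[z_1,\ldots,z_n]]$ (as $g_h(\bm0)\neq0$) subject to the homogeneous equation $g_h(\z)=e_h(\z)g_h(\Omega\z)$, a special case of Kubota's analysis shows that their dependence is in fact multiplicative: there are integers $d_1,\ldots,d_m$, not all zero, with $S(\z):=\prod_{h=1}^m g_h(\z)^{d_h}\in\Qbar(z_1,\ldots,z_n)^\times$, whence the functional equations give $S(\z)=S(\Omega\z)\prod_{h=1}^m e_h(\z)^{d_h}$, which is exactly alternative (ii). If instead $g_1(\z),\ldots,g_m(\z)$ are algebraically independent over $\Qbar(z_1,\ldots,z_n)$, then the dependence of the whole family must be carried by the $f_{ij}(\z)$, and I would appeal to Nishioka's structural analysis of the Jordan-block system $\f_i(\z)=A_i\f_i(\Omega\z)+\bb_i(\z)$: the unipotent shape of the $A_i$ forces the dependence down to a nontrivial $\Qbar$-linear relation $R(\z)=\sum_k c_k f_{i_k1}(\z)\in\Qbar(z_1,\ldots,z_n)$ among the top components, and matching the constant scaling factors $a_i$ under iteration of $\Omega$ shows that only indices sharing a common $a_i$ can occur, yielding the subset $\{i_1,\ldots,i_r\}$ with $a_{i_1}=\cdots=a_{i_r}$ of alternative (i); the relation $R(\z)=a_{i_1}R(\Omega\z)+\sum_k c_k b_{i_k1}(\z)$ then follows from the first rows $f_{i1}(\z)=a_if_{i1}(\Omega\z)+b_{i1}(\z)$ of the block equations.

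The main obstacle I expect is the interface between the additive and multiplicative blocks in this second step: a priori the dependence of the whole family could entangle the $f_{ij}$ with the $g_h$, and one must show that no genuinely mixed relation survives, so that in the second case the $f_{ij}$ are already dependent over $\Qbar(z_1,\ldots,z_n)$ alone and Nishioka's analysis applies. The decisive point is the dichotomy between constant and non-constant scaling under $\z\mapsto\Omega\z$: the top components $f_{i1}$ scale by the constants $a_i$ modulo the inhomogeneous rational terms $b_{i1}(\z)$, whereas each $g_h$ scales by the non-constant rational function $e_h(\z)$, so tracking how a monomial in the functions transforms and separating the constant from the rational factors forces the two blocks to decouple. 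Making this precise while controlling the lower Jordan entries $f_{ij}$ $(j\ge2)$, whose transformation mixes in $f_{i1}(\Omega\z)$ through the off-diagonal entries $a_{s\,s-1}^{(i)}$, is where the bulk of the work lies and where I would concentrate the effort.
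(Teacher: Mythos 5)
Your two-part architecture is the same as the paper's: the paper also factors the proof through a value-level statement (Theorem \ref{thm:indep_values}: independence of the functions over the rational function field implies independence of their values at $\ba$) and a function-level statement (Theorem \ref{thm:indep_functions}: dependence of the functions forces alternative (i) or (ii)), and your first step, including the auxiliary-polynomial strategy, is essentially the paper's proof of Theorem \ref{thm:indep_values}. One small correction there: (IV)$_p$ is a hypothesis of the criterion, not something supplied by a vanishing theorem; in the paper, Lemma \ref{lem:Masser} enters the proof of Theorem \ref{thm:indep_values} only to check that a power $\Omega^{k_0}$ together with $\ba$ again satisfies (IV)$_\infty$, while Lemma \ref{lem:Mahler} is used only later (Lemma \ref{lem:conditions}) to verify the hypotheses in the application.

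The genuine gap is in your second step. Your dichotomy---if the $g_h$ are dependent over $\Qbar(z_1,\ldots,z_n)$, apply Kubota to get (ii); if they are independent, claim the $f_{ij}$ alone must be dependent and apply Nishioka to get (i)---hinges on the decoupling assertion that no genuinely mixed relation occurs, and this is exactly what you do not prove. It is also not a consequence of the two results you cite: Nishioka's Jordan-block theorem takes as hypothesis that the $f_{ij}$ alone are dependent, and Kubota's homogeneous criterion takes as hypothesis that the $g_h$ alone are dependent, so neither applies to a relation entangling both blocks; ruling out such relations is precisely the nontrivial content of the combined criterion. (The decoupling is true, but a posteriori: alternative (i) is itself a dependence among the $f_{i1}$, and alternative (ii) forces $\prod_h g_h(\z)^{d_h}$ to equal $S(\z)$ up to a multiplicative constant, since their ratio is $\Omega$-invariant and hence constant under (I); deriving it this way presupposes the theorem you are proving.) A direct proof needs an interface lemma of roughly the following kind: if the $f_{ij}$ are algebraically independent over $\Qbar(z_1,\ldots,z_n)$ and some element of $\Qbar(z_1,\ldots,z_n)(\{f_{ij}\})$ satisfies $S(\z)=S(\Omega\z)\prod_h e_h(\z)^{d_h}$, then it already lies in $\Qbar(z_1,\ldots,z_n)$. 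The paper obtains all of this wholesale as Theorem \ref{thm:indep_functions}, whose proof it attributes to merging the proof of Theorem 3 of Nishioka \cite{N1996} with the second half of the proof of Theorem 3.5 of Nishioka \cite{N}; that is, the minimal-relation analysis is run with both types of functions present at once rather than reduced to the two pure cases. Your ``constant versus non-constant scaling'' heuristic is the right intuition for why the blocks cannot interact, but turning it into a proof amounts to reconstructing that merged argument, so as it stands the hardest part of the function-level theorem is deferred rather than done.
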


The proof consists of two parts. The first is Theorem \ref{thm:indep_functions} below, the algebraic independence over the field of rational functions of Mahler functions themselves, which can be obtained by combining the proof of Theorem 3 in Nishioka \cite{N1996} and the second half of that of Theorem 3.5 in Nishioka \cite{N}.

\begin{thm}\label{thm:indep_functions}
Let $C$ be a field of characteristic $0$ and $M$ the quotient field of\newline $C[[z_1,\ldots,z_n]]$. Let $\Omega$ be an $n\times n$ matrix with nonnegative integer entries satisfying the condition {\rm(I)}. Suppose that $f_{ij}(\z)\in M$ $(1\leq i\leq l,\ 1\leq j\leq n(i))$ satisfy the system of functional equations
$$\left(
\begin{array}{c}
f_{i1}(\Omega\z) \\
\vdots \\
\vdots \\
f_{in(i)}(\Omega\z)
\end{array}
\right)=\left(
\begin{array}{cccc}
a_i&&&\\
a_{21}^{(i)}&a_i&&\hsymb{0}\\
\vdots&&\ddots&\\
a_{n(i)1}^{(i)}&\cdots&a_{n(i)\,n(i)-1}^{(i)}&a_i
\end{array}
\right)\left(
\begin{array}{c}
f_{i1}(\z) \\
\vdots \\
\vdots \\
f_{in(i)}(\z)
\end{array}
\right)+\left(
\begin{array}{c}
b_{i1}(\z) \\
\vdots \\
\vdots \\
b_{in(i)}(\z)
\end{array}
\right),$$
where $a_i$, $a_{st}^{(i)}\in C$, $a_i\neq0$, $a_{s\,s-1}^{(i)}\neq0$, and $b_{ij}(\z)\in C(z_1,\ldots,z_n)$. Assume that $g_h(\z)\in M^\times$ $(1\leq h\leq m)$ satisfy the functional equations
$$g_h(\Omega\z)=e_h(\z)g_h(\z)\quad (1\leq h\leq m),$$
where $e_h(\z)\in C(z_1,\ldots,z_n)$ $(1\leq h\leq m)$. 

Then, if $f_{ij}(\z)$ $(1\leq i\leq l,\ 1\leq j\leq n(i))$ and $g_h(\z)$ $(1\leq h\leq m)$ are algebraically dependent over $C(z_1,\ldots,z_n)$, then at least one of the following two conditions holds:
\begin{enumerate}
\item There exist a nonempty subset $\{i_1,\ldots,i_r\}$ of $\{1,\ldots,l\}$ and nonzero elements $c_1,\ldots,c_r$ of $C$ such that
$$a_{i_1}=\cdots=a_{i_r}$$
and
$$R(\z):=c_1f_{i_11}(\z)+\cdots+c_rf_{i_r1}(\z)\in C(z_1,\ldots,z_n).$$
Here $R(\z)$ satisfies the functional equation
$$R(\Omega\z)=a_{i_1}R(\z)+c_1b_{i_11}(\z)+\cdots +c_rb_{i_r1}(\z).$$
\item There exist integers $d_1,\ldots,d_m$, not all zero, and $S(\bm{z})\in C(z_1,\ldots,z_n)^\times$ such that
$$S(\Omega\z)=S(\z)\prod_{h=1}^me_h(\z)^{d_h}.$$
\end{enumerate}
\end{thm}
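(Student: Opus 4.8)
The plan is to exploit the $C$-algebra endomorphism $\phi$ of $M$ induced by the substitution $\z\mapsto\Omega\z$; since $\Omega$ is nonsingular with no eigenvalue a root of unity (condition (I)), $\phi$ is injective and stabilizes $K:=C(z_1,\ldots,z_n)$, so $(M,\phi)$ is a difference field over the difference subfield $K$. In this language the two families solve two qualitatively different first-order equations: each first component satisfies the inhomogeneous scalar equation $\phi(f_{i1})=a_if_{i1}+b_{i1}$ with \emph{constant} eigenvalue $a_i\in C^\times$, whereas each $g_h$ satisfies the homogeneous equation $\phi(g_h)=e_hg_h$ with a \emph{rational} character $e_h\in K^\times$. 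Conditions (i) and (ii) are precisely the two ways such solutions can fail to be algebraically independent---an additive relation among first components sharing an eigenvalue, or a multiplicative relation among the $g_h$---so the goal is to show that any algebraic relation over $K$ is forced into one of these two shapes. I would obtain this by running the argument of \cite{N1996} on the unipotent blocks and the homogeneous argument from the second half of Theorem 3.5 of \cite{N} on the $g_h$, and then checking that the two do not interfere.

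The first step is the reduction to first components, following \cite{N1996}. Because each $A_i$ is a single lower-triangular Jordan block with $a_{s\,s-1}^{(i)}\neq0$, the higher components $f_{ij}$ $(j\ge2)$ behave like iterated ``integrals'' of $f_{i1}$, each contributing genuine transcendence over the field generated by the lower-indexed functions. The content of Nishioka's lemma is that any algebraic relation among all the $f_{ij}$ (together with the $g_h$) can be pushed down so that the first components enter only linearly and only through blocks with a common eigenvalue: either there is a nontrivial $C$-linear combination $\sum_\ell c_\ell f_{i_\ell1}$ with $a_{i_1}=\cdots=a_{i_r}$ already lying in $K$---giving condition (i), the stated equation for $R$ following at once from $\phi(R)=a_{i_1}R+\sum_\ell c_\ell b_{i_\ell1}$---or the dependence is carried entirely by the homogeneous family, reducing us to the purely multiplicative situation.

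In that situation I would extract condition (ii) by the standard transform-and-compare argument. Writing $\g:=(g_1,\ldots,g_m)$ and $\g^\nu:=\prod_h g_h^{\nu_h}$, normalize a relation $\sum_\nu c_\nu\g^\nu=0$ with coefficients $c_\nu\in K$ having the fewest monomials. Applying $\phi$ and using $\phi(g_h)=e_hg_h$ gives a second relation $\sum_\nu\phi(c_\nu)\bigl(\prod_h e_h^{\nu_h}\bigr)\g^\nu=0$ on the same monomials; by minimality it is a $K^\times$-multiple of the first, so $\phi(c_\nu)\prod_h e_h^{\nu_h}=\lambda c_\nu$ for a common $\lambda\in K^\times$ and every $\nu$ in the support. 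Taking the ratio for two distinct multi-indices $\nu,\nu'$ in the support and setting $S:=c_\nu/c_{\nu'}\in K^\times$ and $d_h:=\nu'_h-\nu_h$ (not all zero, as $\nu\neq\nu'$), one gets $\phi(S)/S=\prod_h e_h^{d_h}$, that is $S(\Omega\z)=S(\z)\prod_h e_h(\z)^{d_h}$, which is condition (ii). The point worth stressing is that $S$ lands in $K^\times$ automatically, being a ratio of the coefficients of the relation, so no descent from a larger field is required.

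The step I expect to be the main obstacle is making the two arguments genuinely compatible, i.e.\ ruling out a ``mixed'' relation that involves the unipotent tower and the multiplicative family simultaneously and escapes both conclusions. The delicate point is that under $\phi$ the $f$-variables transform affinely (with constant eigenvalues $a_i$ and the nilpotent shift produced by the $a_{s\,s-1}^{(i)}$), whereas the $g$-monomials merely rescale by $\prod_h e_h^{\nu_h}\in K^\times$; one must show that, when a relation is sorted by its $g$-monomials, the affine action cannot force cancellation across distinct characters, so that the coefficients of the surviving multiplicative relation may genuinely be taken in $K$. Here the hypotheses that $\Omega$ is nonsingular with no eigenvalue a root of unity---so that $\phi$ has no nontrivial finite orbits and the weights of the iterated integrals strictly increase---and that each subdiagonal entry $a_{s\,s-1}^{(i)}$ is nonzero are exactly what separate the relation into its additive and multiplicative parts, yielding (i) or (ii) but not an uncontrolled hybrid. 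Establishing this separation cleanly---in effect, that the difference-field extension generated by the unipotent blocks and the one generated by the $g_h$ are independent over $K$---is the crux, and it is precisely what the combination of Nishioka's two proofs is designed to provide.
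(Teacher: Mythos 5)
Your proposal takes essentially the same approach as the paper: the paper offers no self-contained proof of this theorem, saying only that it ``can be obtained by combining the proof of Theorem 3 in Nishioka \cite{N1996} and the second half of that of Theorem 3.5 in Nishioka \cite{N}'' --- exactly the two ingredients you invoke, and the details you do supply (the transform-and-compare argument yielding $S(\Omega\z)=S(\z)\prod_{h}e_h(\z)^{d_h}$ from a minimal-support relation, and the derivation of the functional equation for $R(\z)$ in case (i)) are correct. The non-interference of the additive and multiplicative families, which you rightly flag as the crux, is likewise delegated by the paper to the combination of Nishioka's two arguments, so your attempt matches the paper's own treatment in both route and level of detail.
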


The second part, Theorem \ref{thm:indep_values} below, asserts the algebraic independence of the values of Mahler functions under the assumption that the Mahler functions themselves are algebraically independent over the field of rational functions.

\begin{thm}\label{thm:indep_values}
Let $p$ be $\infty$ or a prime number, $K$ a number field, and $\Omega$ an $n\times n$ matrix with nonnegative integer entries. Let $f_i(\z),\ g_h(\z)\in K[[z_1,\ldots,z_n]]$ $(1\leq i\leq l,\ 1\leq h\leq m)$ with $g_h(\bm{0})\neq0$ $(1\leq h\leq m)$ converge in an $n$-polydisc $U$ around the origin of $\C_p^n$. Suppose that $f_i(\z)$ $(1\leq i\leq l)$ satisfy the system of functional equations
\begin{equation}\label{eq:f}
\left(
\begin{array}{c}
f_1(\z)\\
\vdots \\
f_l(\z)
\end{array}
\right)
=A\left(
\begin{array}{c}
f_1(\Omega\z)\\
\vdots \\
f_l(\Omega\z)
\end{array}
\right)+\left(
\begin{array}{c}
b_1(\z)\\
\vdots \\
b_l(\z)
\end{array}
\right),
\end{equation}
where $A$ is an $l\times l$ matrix with entries in $K$ and $b_i(\z)\in K(z_1,\ldots,z_n)$ $(1\leq i\leq l)$. Assume that $g_h(\z)$ $(1\leq h\leq m)$ satisfy the functional equations
\begin{equation}\label{eq:g}
g_h(\z)=e_h(\z)g_h(\Omega\z)\quad (1\leq h\leq m),
\end{equation}
where $e_h(\z)\in K(z_1,\ldots,z_n)$ $(1\leq h\leq m)$. Let $\ba=(\alpha_1,\ldots,\alpha_n)$ be a point in $U$ whose components are nonzero algebraic numbers. Suppose that $\Omega$ and $\ba$ satisfy the conditions {\rm(I)}, {\rm(II)}, {\rm(III)}$_p$, and {\rm(IV)}$_p$ and that $b_i(\Omega^k\ba), e_h(\Omega^k\ba)$ $(1\leq i\leq l,\ 1\leq h\leq m)$ are defined and $e_h(\Omega^k\ba)\neq0$ $(1\leq h\leq m)$ for all $k\geq0$. 

Then, if the functions $f_i(\z)$ $(1\leq i\leq l)$ and $g_h(\z)$ $(1\leq h\leq m)$ are algebraically independent over $K(z_1,\ldots,z_n)$, then the numbers $f_i(\ba)$ $(1\leq i\leq l)$ and $g_h(\ba)$ $(1\leq h\leq m)$ of $\Qbar_p$ are algebraically independent over $\Q$.
\end{thm}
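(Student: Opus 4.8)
The plan is to establish the statement by Mahler's method in its hard direction, transferring algebraic independence from the functions to their values, with Nesterenko's theorem on algebraic independence as the engine. Write $\bm{w}(\z):={}^t(f_1(\z),\ldots,f_l(\z),g_1(\z),\ldots,g_m(\z))$ and $N:=l+m$, and assume the components of $\bm{w}(\z)$ are algebraically independent over $K(z_1,\ldots,z_n)$; the goal is $\mathrm{trdeg}_\Q\Q(\bm{w}(\ba))=N$. First I would record two normalizations coming from the hypotheses: comparing constant terms in \eqref{eq:g} together with $g_h(\bm{0})\neq0$ forces $e_h(\bm{0})=1$, and by {\rm(II)} and {\rm(III)}$_p$ the orbit $\ba_k:=\Omega^k\ba$ tends to the origin with $|\alpha_i^{(k)}|_p\leq e^{-c\rho^k}$ for all large $k$.

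For a large integer parameter $D$ I would use Siegel's lemma to produce a nonzero polynomial $P\in\mathcal{O}_K[X_1,\ldots,X_N]$ of total degree $\leq D$ and controlled height such that $E(\z):=P(\bm{w}(\z))\in K[[z_1,\ldots,z_n]]$ vanishes at the origin to order at least $T$ with $T\asymp D^N$; balancing the roughly $D^N$ free coefficients of $P$ against the number of Taylor coefficients to annihilate makes this possible. Crucially, since $f_i(\z)$ and $g_h(\z)$ are algebraically independent over $K(z_1,\ldots,z_n)$, we have $E(\z)\not\equiv0$, so $\mathrm{ord}\,E$ is finite and, by construction, at least $T$.

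Next I would transport $E$ along the orbit. Iterating \eqref{eq:f} gives $\f(\Omega^k\z)=A^{-k}\f(\z)-\sum_{j=0}^{k-1}A^{-(k-j)}\bb(\Omega^j\z)$, a $K(z_1,\ldots,z_n)$-linear expression in the $f_i(\z)$, while iterating \eqref{eq:g} gives $g_h(\Omega^k\z)=g_h(\z)\prod_{j=0}^{k-1}e_h(\Omega^j\z)^{-1}$. Substituting these into $E(\Omega^k\z)=P(\bm{w}(\Omega^k\z))$ rewrites it as $Q_k(\bm{w}(\z))$ for a polynomial $Q_k$ of degree $\leq D$ whose coefficients are rational functions of $\z$; condition {\rm(II)} bounds the growth of the entries of $\Omega^k$, hence the degrees of these rational coefficients, by $O(\rho^k)$. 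Evaluating at $\z=\ba$ (all the relevant $\bb(\ba_j)$ and $e_h(\ba_j)$ being defined and the latter nonzero by hypothesis) yields $E(\ba_k)=Q_k(\bm{w}(\ba))$, a polynomial of bounded degree $D$ in the target values $\bm{w}(\ba)$ with algebraic coefficients whose logarithmic heights grow only like $O(\rho^k)$.

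Finally, because $E$ vanishes to order $\geq T$ at the origin and $|\alpha_i^{(k)}|_p\leq e^{-c\rho^k}$, Cauchy-type estimates on the polydisc $U$ give $|E(\ba_k)|_p\leq e^{-c'T\rho^k}$ for large $k$; and since $E\not\equiv0$ converges near the origin, condition {\rm(IV)}$_p$ furnishes an arithmetic progression $k=ak'$ along which $E(\ba_k)\neq0$, the modulus $a$ being exactly what absorbs the periodicity hidden in $A^{-k}$ and in the products $\prod_j e_h(\ba_j)^{-1}$. Thus $\{Q_k(\bm{w}(\ba))\}$ is a sequence of nonzero algebraic numbers of bounded degree $\leq D$ in the values, with $p$-adic absolute values decaying like $e^{-c'T\rho^k}$ while heights grow only like $O(\rho^k)$; feeding this into Nesterenko's theorem, as $D\to\infty$ with $T\asymp D^N$, forces $\mathrm{trdeg}_\Q\Q(\bm{w}(\ba))\geq N$, which is the desired algebraic independence. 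The hard part will be precisely this last step: one must match, uniformly in $k$ and $D$, the decay rate, degree, and height of the $Q_k(\bm{w}(\ba))$ to the exact quantitative hypotheses of Nesterenko's criterion, while reconciling the additive triangular behavior of the $f_i$ (Nishioka-type) with the multiplicative Kubota-type factors $\prod_j e_h(\ba_j)^{-1}$ inside a single auxiliary polynomial; the nonvanishing input from {\rm(IV)}$_p$ is what prevents the whole orbit from lying in the zero locus of $E$ and thereby collapsing the argument.
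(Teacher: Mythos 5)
Your proposal takes a genuinely different route from the paper, and that route has a fatal gap at its final step. The paper never invokes an algebraic independence criterion: it argues by contradiction, assuming a nontrivial polynomial relation $F(\ba;\bm{\tau})=0$ among the values, transporting it along the orbit via the transformed coefficient vector $\bm{T}(\bm{\tau};A^k;\bb^{(k)}(\ba);\e^{(k)}(\ba))$, and building an auxiliary function $E(\z;\bt)=\sum_{h=0}^{N}P_h(\z;\bt)F(\z;\bt)^h$ whose vanishing is measured by an index relative to the prime ideal $V(\bm{\tau})$ (primality rests on the Skolem--Mahler--Lech theorem, Lemma \ref{lem:slm}, after making the eigenvalue group of $A$ torsion free). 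Under the dependence hypothesis the value $E(\Omega^k\ba;\cdots)$ collapses to $P_0(\Omega^k\ba;\cdots)$, a single algebraic number lying in the \emph{fixed} number field $K$, which is nonzero for some large $k$ by Proposition \ref{prop:nonzero} (resting on Nishioka's vanishing lemma, Lemma \ref{lem:crucial}); the contradiction then follows from the elementary fundamental inequality \eqref{eq:Liouville}, whose strength depends only on $[K:\Q]$. This design exists precisely because the direct strategy you propose cannot be pushed to full transcendence degree.

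Concretely, two things break in your plan. First, a counting error: your polynomial $P$ of total degree $D$ in $N=l+m$ variables has about $D^N$ free coefficients, but forcing $E(\z)=P(\bm{w}(\z))$ to vanish to order $T$ at the origin of $\C_p^n$ imposes about $T^n$ linear conditions, so Siegel's lemma yields only $T\asymp D^{N/n}$, not $T\asymp D^N$. Second, and decisively, the data you end with---degree $\leq D$ fixed, log-heights $O(\rho^k)$, smallness $\exp(-cD^{N/n}\rho^k)$---do not meet the hypotheses of any Nesterenko/Philippon-type criterion at the strength needed for $\mathrm{trdeg}\geq N$. Such criteria require, roughly, that the smallness exponent dominate $\delta^{t}(\tau+\delta)$ (degree $\delta$, log-height $\tau$) with the ratio tending to infinity along a single admissible sequence in order to conclude $\mathrm{trdeg}\geq t$. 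For fixed $D$ your gain $D^{N/n}$ is \emph{constant} in $k$, so nothing diverges along the orbit; letting $D\to\infty$ and diagonalizing over $(D,k)$ violates the regularity conditions these criteria impose and in any case caps the reachable transcendence degree at roughly $N/n$, while even your uncorrected count $D^N$ would leave the case $t=N$ exactly on the boundary where the criterion fails. Your decay and height estimates do mirror the paper's Propositions \ref{prop:UB} and \ref{prop:LB}, and your use of the condition (IV)$_p$ to get nonvanishing of the fixed power series $E$ along the orbit is sound; but the endgame you defer as ``the hard part'' is not a matter of matching constants---it is the step the method cannot perform, and it is exactly what the paper's prime-ideal contradiction machinery, which reduces everything to Liouville's inequality in a fixed number field, is designed to avoid.
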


We prove Theorem \ref{thm:indep_values} in the next subsection. Let us now introduce some notation which will be used in the proof of Theorem \ref{thm:indep_values}. For any algebraic number $\alpha$, we denote by $\house{\alpha}$ the maximum of the archimedean absolute values of the conjugates of $\alpha$ and by $\den(\alpha)$ the least positive integer $d$ such that $d\alpha$ is an algebraic integer. We define 
$$\|\alpha\|:=\max\{\house{\alpha}\,,\ \den(\alpha)\}.$$
It is easily seen that
$$\left\|\sum_{i=1}^n\alpha_i\right\|\leq n\prod_{i=1}^n\|\alpha_i\|$$
and
$$\left\|\prod_{i=1}^n\alpha_i\right\|\leq \prod_{i=1}^n\|\alpha_i\|$$
for any algebraic numbers $\alpha_1,\ldots,\alpha_n$. Furthermore, for any nonzero algebraic number $\alpha$, we have
$$\|\alpha^{-1}\|\leq\|\alpha\|^{2[\Q(\alpha):\Q]}$$
(cf. Nishioka \cite{N1996})
and the fundamental inequality
\begin{equation}\label{eq:Liouville}
|\alpha|_p\geq\|\alpha\|^{-2[\Q(\alpha):\Q]}
\end{equation}
(cf. Waldschmidt \cite{Wald}).

\subsection{Proof of Theorem \ref{thm:indep_values}}
We denote by $\N$ the set of nonnegative integers. If $\bl$ is a vector whose components are nonnegative integers, then we denote by $|\bl|$ the sum of the components of $\bl$. The following lemma plays a crucial role in the proof. 

\begin{lem}[Nishioka \cite{N1996}]\label{lem:crucial}
Let $p$ be $\infty$ or a prime number, $\Omega$ an $n\times n$ matrix with nonnegative integer entries, and $\ba$ an $n$-dimensional vector whose components $\alpha_1,\ldots,\alpha_n$ are nonzero algebraic numbers. Suppose that $\Omega$ and $\ba$ satisfy the conditions {\rm(I)}, {\rm(II)}, {\rm(III)}$_p$, and {\rm(IV)}$_p$. Define the function
$$\psi(\z;x)=\sum_{i=1}^q\sum_{j=1}^{d_i}x^{j-1}\gamma_i^xh_{ij}(\z),$$
where $\gamma_1,\ldots,\gamma_q$ are nonzero distinct elements of $\C_p$ and $h_{ij}(\z)\in\C_p[[z_1,\ldots,z_n]]$ $(1\leq i\leq q,\ 1\leq j\leq d_i)$ converge in an $n$-polydisc $U$ around the origin of $\C_p^n$. Then, if $\psi(\Omega^k\ba;k)=0$ for all sufficiently large $k$, then $h_{ij}(\z)=0$ for every $i,j$.
\end{lem}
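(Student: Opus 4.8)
The plan is to argue by contradiction: assuming some $h_{ij}\not\equiv0$, I would derive a contradiction from the orbit-vanishing $\psi(\Omega^k\ba;k)=0$ $(k\geq k_0)$, with the conditions (III)$_p$ and (IV)$_p$ playing complementary roles. Throughout, write $b_{ij}(k)=k^{j-1}\gamma_i^k$ and $N=\sum_{i=1}^q d_i$; the $N$ sequences $b_{ij}(k)$ are linearly independent and form a full system of solutions of the linear recurrence whose characteristic polynomial is $\prod_{i=1}^q(x-\gamma_i)^{d_i}$, of degree $N$ with nonzero constant term since each $\gamma_i\neq0$. The argument splits into a purely metric step, which pins down the constant terms $h_{ij}(\bm{0})$, and an arithmetic-dynamical step, which upgrades this to identical vanishing.

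First I would show that $h_{ij}(\bm{0})=0$ for all $i,j$. Put $v_\infty(k):=\sum_{i,j}b_{ij}(k)h_{ij}(\bm{0})$. Since each $h_{ij}$ converges near the origin and the hypothesis gives $\sum_{i,j}b_{ij}(k)h_{ij}(\Omega^k\ba)=0$, writing $\Omega^k\ba=(\alpha_1^{(k)},\ldots,\alpha_n^{(k)})$ yields
\[
|v_\infty(k)|_p=\Big|\sum_{i,j}b_{ij}(k)\big(h_{ij}(\bm{0})-h_{ij}(\Omega^k\ba)\big)\Big|_p\leq C\,\big(\max_i|\gamma_i|_p\big)^k k^{N}\max_{1\leq i\leq n}|\alpha_i^{(k)}|_p .
\]
By (III)$_p$ this is at most $C'(\max_i|\gamma_i|_p)^k k^{N}e^{-c\rho^k}$, which decays super-exponentially because $\rho>1$ by (I). On the other hand $v_\infty$ is a fixed exponential polynomial, so the window vectors $V(k)=(v_\infty(k),\ldots,v_\infty(k+N-1))$ satisfy $V(k)=\mathcal{C}^{\,k-k_0}V(k_0)$ with $\mathcal{C}$ the invertible companion matrix of the recurrence; if $v_\infty\not\equiv0$ then $V(k_0)\neq0$, and the standard estimate $\|\mathcal{C}^{-m}\|_p\leq c\,m^{N}(\min_i|\gamma_i|_p)^{-m}$ produces, in each window, an index with $|v_\infty|_p\geq\delta\,(\min_i|\gamma_i|_p)^{k}k^{-N}$, only exponentially small. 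These bounds are incompatible for large $k$, so $v_\infty\equiv0$ and every $h_{ij}(\bm{0})=0$.

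The decisive task is then to pass from $h_{ij}(\bm{0})=0$ to $h_{ij}\equiv0$; this is where (IV)$_p$ enters, as it is exactly the statement that no nonzero convergent power series vanishes along a whole tail $\{\Omega^{ak}\ba\}_{k\gg0}$. I would run an induction on $N$. The base case $N=1$ is immediate: $\psi(\z;x)=\gamma_1^x h_{11}(\z)$ forces $h_{11}(\Omega^k\ba)=0$ for all large $k$, hence $h_{11}\equiv0$ by (IV)$_p$ with $a=1$. For the inductive step I would use the twisted shift $T$ defined by $(T\Phi)(\z;x):=\Phi(\Omega\z;x+1)$, which preserves orbit-vanishing because $(T\psi)(\Omega^k\ba;k)=\psi(\Omega^{k+1}\ba;k+1)=0$; the intention is that applying $T-\gamma_q\,\mathrm{id}$ removes the highest block attached to $\gamma_q$, lowering $N$, after which the inductive hypothesis applies and unwinds to $h_{ij}\equiv0$.

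The hard part will be precisely this reduction. Because $T$ composes the coefficient functions with $\Omega$, the operator $T-\gamma_q$ replaces the top coefficient $h_{q d_q}(\z)$ not by $0$ but by $\gamma_q\big(h_{q d_q}(\Omega\z)-h_{q d_q}(\z)\big)$, so the $\Omega$-twist obstructs any purely formal lowering of $N$. Overcoming this is the crux, and I expect it to require the two hypotheses in tandem: the super-exponential decay (III)$_p$ forces $h(\Omega^{k}\ba)$, $h(\Omega^{k+1}\ba)$ and $h(\bm{0})=0$ to agree to extremely high precision along the orbit, so that the twisted and untwisted coefficients become asymptotically indistinguishable on the tail, while (IV)$_p$ supplies orbit points witnessing the nonvanishing of any surviving nonzero coefficient series. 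The bulk of the work should lie in making this simultaneous use quantitative — balancing the metric estimates of the first step against the combinatorics of the twisted shift so that the induction on $N$ genuinely closes.
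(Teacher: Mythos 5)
There is a genuine gap, and it sits exactly at the heart of the lemma. Your Step 1 (forcing $h_{ij}(\bm{0})=0$ via (III)$_p$ against the at-most-exponential decay of a nonzero exponential-polynomial sequence) is correct, and so is the base case $N=1$; but the inductive step is not proved. You introduce the operator $T-\gamma_q\,\mathrm{id}$, correctly observe that the $\Omega$-twist prevents it from annihilating the top coefficient, and then declare that overcoming this obstruction is the crux, to be handled by some unspecified ``quantitative balancing'' of (III)$_p$ and (IV)$_p$. That is a statement of the problem, not a solution. Note that the paper itself does not reprove this lemma either --- it is quoted from Nishioka [N1996] with the remark that her proof carries over to prime $p$ --- so your attempt has to be measured against Nishioka's argument, and the decisive device there is missing from your proposal. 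The standard way to close this induction is not an operator identity but a \emph{cross-multiplication by the top coefficient function}: assuming $h_{qd_q}\not\equiv0$, one sets
$$\phi(\z;x):=h_{qd_q}(\z)\,\psi(\Omega\z;x+1)-\gamma_q\,h_{qd_q}(\Omega\z)\,\psi(\z;x),$$
which still vanishes along the orbit, still has coefficients in the ring of convergent power series (this is why one multiplies by $h_{qd_q}(\z)$ and $h_{qd_q}(\Omega\z)$ rather than subtracting bare shifts), and whose coefficient of $x^{d_q-1}\gamma_q^x$ is now \emph{identically} zero. The induction hypothesis then forces all remaining coefficients of $\phi$ to vanish identically, yielding functional equations such as
$$\gamma_i\,h_{id_i}(\Omega\z)\,h_{qd_q}(\z)=\gamma_q\,h_{id_i}(\z)\,h_{qd_q}(\Omega\z)\quad(i\neq q),$$
i.e. $S(\Omega\z)=(\gamma_q/\gamma_i)S(\z)$ for $S=h_{id_i}/h_{qd_q}$, together with an additive analogue $S(\Omega\z)=S(\z)-c$ coming from the $(q,d_q-1)$ slot.

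Even after this reduction a second substantive ingredient is needed, which your plan also does not contain: one must show that under condition (I) these multiplicative and additive functional equations have no nontrivial solutions in the quotient field of $\C_p[[z_1,\ldots,z_n]]$ (this is where Kubota-type lemmas, or the Perron-eigenvector weight argument, enter; it is genuinely algebraic, not metric). Your own Step 1 is in fact evidence that the purely quantitative route you sketch cannot work: in the critical case every $h_{ij}(\bm{0})=0$, so along the orbit all the values $h_{ij}(\Omega^k\ba)$ decay to $0$ and there is no ``leading constant'' left for a growth-versus-decay comparison of the kind you used to pin down the constant terms; the recurrence/companion-matrix argument has no analogue once the coefficients themselves vary with $k$. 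So the proposal identifies the obstruction accurately but does not overcome it, and the direction it points in (asymptotic indistinguishability of $h(\Omega^k\ba)$ and $h(\Omega^{k+1}\ba)$ plus (IV)$_p$) is not the mechanism by which the known proof succeeds.
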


This lemma was proved by Nishioka \cite{N1996} in the case where $p$ is $\infty$. The proof is also valid in the case where $p$ is a prime number.

\begin{proof}[Proof of Theorem \ref{thm:indep_values}]
We may assume that $\alpha_1,\ldots,\alpha_n$ and the eigenvalues of $A$ are all contained in $K$. Since $f_1(\z),\ldots,f_l(\z)$ are algebraically independent over $K(z_1,\ldots,z_n)$, we have $\det A\neq0$. We let $\f(\z):={}^t(f_1(\z),\ldots,f_l(\z))$, $\bb(\z):={}^t(b_1(\z),\ldots,b_l(\z))$, and $\g(\z):={}^t(g_1(\z),\ldots,g_m(\z))$. Iterating the functional equations \eqref{eq:f} and \eqref{eq:g}, we see that
\begin{equation}\label{eq:fk}
\f(\z)=A^k\f(\Omega^k\z)+\bb^{(k)}(\z)\quad(k\geq0)
\end{equation}
and
\begin{equation}\label{eq:gk}
g_h(\z)=e_h^{(k)}(\z)g_h(\Omega^k\z)\quad(1\leq h\leq m,\ k\geq0),
\end{equation}
where
\begin{equation}\label{eq:bk}
\bb^{(k)}(\z)={}^t(b_1^{(k)}(\z),\ldots,b_l^{(k)}(\z)):=\sum_{j=0}^{k-1}A^j\bb(\Omega^j\z)\in K(z_1,\ldots,z_n)^l
\end{equation}
and
\begin{equation}\label{eq:ek}
e_h^{(k)}(\z):=\prod_{j=0}^{k-1}e_h(\Omega^j\z)\in K(z_1,\ldots,z_n).
\end{equation}
We note here that, any power of $\Omega$ and the point $\ba$ also satisfy the conditions (I), (II), (III)$_p$, and (IV)$_p$. Indeed, it is clear that they satisfy the conditions (I), (II), and (III)$_p$. If $p$ is $\infty$, then we see by Lemma \ref{lem:Masser} that they satisfy the condition (IV)$_\infty$, and if $p$ is a prime number, then it is obvious that they satisfy the condition (IV)$_p$. Therefore, taking a sufficiently large integer $k_0$ and replacing $\Omega$, $A$, $b_i(\z)$, and $e_h(\z)$ with $\Omega^{k_0}$, $A^{k_0}$, $b_i^{(k_0)}(\z)$, and $e_h^{(k_0)}(\z)$, respectively, we may assume that $\Omega^k\ba\in U$ for all $k\geq0$ and that the multiplicative subgroup $G$ of $K^\times$ generated by the eigenvalues of $A$ is torsion free. Since $e_h(\Omega^k\ba)\neq0$ $(1\leq h\leq m)$ for all $k\geq0$, by the functional equation \eqref{eq:g} and the condition (IV)$_p$, we see that $g_h(\Omega^k\ba)\neq0$  $(1\leq h\leq m)$ for all $k\geq0$.

To prove the theorem, we assume on the contrary that $f_i(\ba)$ $(1\leq i\leq l)$ and $g_h(\ba)$ $(1\leq h\leq m)$ are algebraically dependent over $\Q$. Then there exist a positive integer $L$ and integers $\tau_{\bl\m}$ $(\bl\in\La,\ \m\in\M)$, not all zero, such that
$$\sum_{\substack{\bl\in\La\\ \m\in\M}}\tau_{\bl\m}\f(\ba)^{\bl}\g(\ba)^{\m}=0,$$
where $\La:=\{\bl\in\N^l\mid |\bl|\leq L\}$ and $\M:=\{0,1,\ldots,L\}^m$. Let $x_{ij}$ $(1\leq i,j\leq l)$, $w_i$ $(1\leq i\leq l)$, $y_i$ $(1\leq i\leq l)$, $x'_h$ $(1\leq h\leq m)$, $w'_h$ $(1\leq h\leq m)$, and $t_{\bl\m}$ $(\bl\in\La,\ \m\in\M)$ be variables and let
\begin{gather*}
X:=\left(
\begin{array}{ccc}
x_{11}&\cdots&x_{1l} \\
\vdots&&\vdots \\
x_{l1}&\cdots&x_{ll}
\end{array}
\right),\quad
\bm{w}:=\left(
\begin{array}{c}
w_1 \\
\vdots \\
w_l
\end{array}
\right),\quad
\bm{y}:=\left(
\begin{array}{c}
y_1 \\
\vdots \\
y_l
\end{array}
\right),\\
\bm{x}':=\left(
\begin{array}{c}
x'_1 \\
\vdots \\
x'_m
\end{array}
\right),\quad
\bm{w}':=\left(
\begin{array}{c}
w'_1 \\
\vdots \\
w'_m
\end{array}
\right),\quad
\bm{x}'\bm{w}':=\left(
\begin{array}{c}
x'_1w'_1 \\
\vdots \\
x'_mw'_m
\end{array}
\right),
\end{gather*}
and
$$F(\z;\bt):=\sum_{\substack{\bl\in\La\\ \m\in\M}}t_{\bl\m}\f(\z)^{\bl}\g(\z)^{\m}.$$
We define $T_{\bl\m}(\bt;X;\bm{y};\bm{x}')$ $(\bl\in\La,\ \m\in\M)$ by the equality
$$\sum_{\substack{\bl\in\La\\ \m\in\M}}t_{\bl\m}(X\bm{w}+\bm{y})^{\bl}(\bm{x}'\bm{w}')^{\m}=:\sum_{\substack{\bl\in\La\\ \m\in\M}} T_{\bl\m}(\bt;X;\bm{y};\bm{x}')\bm{w}^{\bl}\bm{w}'^{\m},$$
namely,
\begin{align*}
&T_{\bl\m}(\bt;X;\bm{y};\bm{x}')\\
&=\bm{x}'^{\m}\sum_{\substack{\bm{\nu}=(\nu_1,\ldots,\nu_l)\in\N^l\\ |\bl|\leq|\bm{\nu}|\leq L}}t_{\bm{\nu}\m}
\sum_{\substack{\bm{\nu}_1,\ldots,\bm{\nu}_l\in\N^{l+1}\\ \bm{\nu}_i=(\nu_{i0},\nu_{i1},\ldots,\nu_{il})\\ |\bm{\nu}_i|=\nu_i\ (1\leq i\leq l)\\ \sum_{i=1}^l\nu_{ij}=\lambda_j\ (1\leq j\leq l)}}\prod_{i=1}^l\binom{\nu_i}{\nu_{i0}\ \nu_{i1}\,\cdots\,\nu_{il}}y_i^{\nu_{i0}}x_{i1}^{\nu_{i1}}\cdots x_{il}^{\nu_{il}}
\end{align*}
for any $\bl=(\lambda_1,\ldots,\lambda_l)\in\La$ and $\m\in\M$. Letting
$$\e^{(k)}(\z):={}^t(e_1^{(k)}(\z),\ldots,e_m^{(k)}(\z))$$
and
$$\e^{(k)}(\z)\g(\Omega^k\z):={}^t(e_1^{(k)}(\z)g_1(\Omega^k\z),\ldots,e_m^{(k)}(\z)g_m(\Omega^k\z)),$$
by the functional equations \eqref{eq:fk} and \eqref{eq:gk}, we have
\begin{align*}
F(\z;\bt)&=\sum_{\substack{\bl\in\La\\ \m\in\M}}t_{\bl\m}\f(\z)^{\bl}\g(\z)^{\m}\\
&=\sum_{\substack{\bl\in\La\\ \m\in\M}}t_{\bl\m}(A^k\f(\Omega^k\z)+\bb^{(k)}(\z))^{\bl}(\e^{(k)}(\z)\g(\Omega^k\z))^{\m}\\
&=\sum_{\substack{\bl\in\La\\ \m\in\M}} T_{\bl\m}(\bt;A^k;\bb^{(k)}(\z);\e^{(k)}(\z))\f(\Omega^k\z)^{\bl}\g(\Omega^k\z)^{\m}\\
&=F(\Omega^k\z;\bm{T}(\bt;A^k;\bb^{(k)}(\z);\e^{(k)}(\z)))
\end{align*}
for all $k\geq0$. Hence
\begin{equation}\label{eq:vanish}
F(\Omega^k\ba;\bm{T}(\bm{\tau};A^k;\bb^{(k)}(\ba);\e^{(k)}(\ba)))=F(\ba;\bm{\tau})=0\quad(k\geq0).
\end{equation}
We define an ideal $V(\bm{\tau})$ of $K[\bt]$ by
$$V(\bm{\tau}):=\{Q(\bt)\in K[\bt]\mid Q(\bm{T}(\bm{\tau};A^k;\bm{y};\bm{x}'))=0\ {\rm for\ all}\ k\geq0\}.$$

\begin{lem}\label{lem:prime}
$V(\bm{\tau})$ is a prime ideal of $K[\bt]$.
\end{lem}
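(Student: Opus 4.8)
The plan is to exhibit $V(\bm{\tau})$ as the kernel of a single ring homomorphism from $K[\bt]$ into an integral domain, so that primality becomes automatic. For each $k\geq0$ the substitution
$$\Psi_k\colon K[\bt]\longrightarrow R:=K[\bm{y};\bm{x}'],\qquad Q(\bt)\longmapsto Q\bigl(\bm{T}(\bm{\tau};A^k;\bm{y};\bm{x}')\bigr)$$
is a ring homomorphism, and by definition $V(\bm{\tau})=\bigcap_{k\geq0}\ker\Psi_k$. Each $\ker\Psi_k$ is prime because $R$ is a domain, but an intersection of primes need not be prime; so instead I would assemble all the $\Psi_k$ into one homomorphism recording the whole sequence $(\Psi_k(Q))_{k\geq0}$ and identify its image inside a suitable domain.

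First I would describe the shape of $\Psi_k(Q)$ as a function of $k$. Since the eigenvalues of $A$ lie in $K$, the entries of $A^k$ are generalized power sums $\sum_i p_i(k)\gamma_i^{\,k}$, where the $\gamma_i$ are the eigenvalues of $A$ and $p_i\in K[k]$. As $T_{\bl\m}(\bm{\tau};X;\bm{y};\bm{x}')$ is a polynomial in the entries of $X$ with coefficients in $R$, and $Q(\bm{T}(\cdots))$ is in turn a polynomial in the $T_{\bl\m}$, the value $\Psi_k(Q)$ is a generalized power sum
$$\Psi_k(Q)=\sum_{\delta\in G}c_{\delta}(k)\,\delta^{k}\qquad(c_{\delta}\in R[k]),$$
whose bases $\delta$ are products of eigenvalues of $A$ and hence lie in the multiplicative group $G\subseteq K^{\times}$ generated by those eigenvalues. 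Let $\mathcal{E}\subseteq R^{\N}$ be the ring of all functions $k\mapsto\sum_{\delta\in G}c_{\delta}(k)\delta^{k}$ (finite sums, $c_\delta\in R[k]$) under pointwise addition and multiplication; the computation just sketched shows that $\Lambda\colon Q\mapsto\bigl(k\mapsto\Psi_k(Q)\bigr)$ is a ring homomorphism $K[\bt]\to\mathcal{E}$, and $\Lambda(Q)=0$ precisely when $\Psi_k(Q)=0$ for all $k$, i.e. $\ker\Lambda=V(\bm{\tau})$.

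It then remains to prove that $\mathcal{E}$ is an integral domain. The evaluation map from the group algebra $R[k][G]$ (over the polynomial ring $R[k]$) to $R^{\N}$, sending $[\delta]\mapsto(k\mapsto\delta^{k})$, has image $\mathcal{E}$, and it is injective: if $\sum_{\delta}c_{\delta}(k)\delta^{k}=0$ for all $k\geq0$, then the linear independence over $R$ of the sequences $k^{j}\delta^{k}$ for distinct $\delta$ forces every $c_{\delta}$ to vanish. Hence $\mathcal{E}\cong R[k][G]$. Here I would invoke the reduction already made in the proof of Theorem \ref{thm:indep_values}, namely that after replacing $\Omega$ and $A$ by suitable powers we may assume that $G$ is torsion free. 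The group algebra of a torsion-free abelian group over an integral domain is again an integral domain (reduce to finitely generated subgroups $\cong\Z^{r}$, whose group algebras are Laurent polynomial rings over $R[k]$, and pass to the directed union). Therefore $\mathcal{E}\cong R[k][G]$ is a domain, so $\ker\Lambda=V(\bm{\tau})$ is prime; it is proper because $\Lambda(1)=1\neq0$.

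The main obstacle is the middle step: one must check carefully that evaluating $Q$ at $\bm{T}(\bm{\tau};A^k;\bm{y};\bm{x}')$ produces only bases $\delta$ lying in the torsion-free group $G$, so that no accidental root-of-unity ratios among distinct bases occur—this is exactly what guarantees that $\mathcal{E}$ is a domain. The linear independence of the $k^{j}\delta^{k}$ and the torsion-free group-algebra fact are then routine, and it is precisely the torsion-freeness of $G$, secured by the earlier passage to $\Omega^{k_0}$ and $A^{k_0}$, that is indispensable.
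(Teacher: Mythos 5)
Your proof is correct, and it reaches primality by a genuinely different mechanism than the paper's. The common core is the structural observation that, because the eigenvalues of $A$ lie in $K^\times$ and generate a torsion-free group $G$ (by the reduction made just before the lemma, which you correctly invoke, together with $\det A\neq0$ established there), every sequence $\{Q(\bm{T}(\bm{\tau};A^k;\bm{y};\bm{x}'))\}_{k\geq0}$ is a generalized power sum $\sum_{\gamma}p_\gamma(k)\gamma^k$ with bases in $G$ and coefficients in $K[\bm{y};\bm{x}'][k]$; this is exactly the paper's ring $\mathcal{R}_1$. The divergence is in how zero divisors are ruled out. The paper argues directly on a product $P_1P_2\in V(\bm{\tau})$: for each $k$ one of the two factors vanishes, so one of them, say $P_1$, vanishes for infinitely many $k$, and then the Skolem--Lech--Mahler theorem (Lemma \ref{lem:slm}) combined with torsion-freeness of $G$ forces the representation of that factor to be empty, i.e.\ $P_1\in V(\bm{\tau})$. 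You never pass to an infinite subset of $\N$: you identify the whole sequence ring with the group algebra $R[k][G]$, $R=K[\bm{y};\bm{x}']$, via uniqueness of generalized-power-sum representations --- vanishing for \emph{all} $k$ with distinct nonzero bases kills all coefficients, an elementary shift-operator (or confluent Vandermonde) fact in characteristic $0$, much weaker than what Skolem--Lech--Mahler provides --- and then use that the group algebra of a torsion-free abelian group over a domain is a domain; since $G$ is in fact finitely generated, $R[k][G]$ is literally a Laurent polynomial ring, so even your direct-limit step can be skipped. Primality is then automatic, as $V(\bm{\tau})$ is the proper kernel of a homomorphism into a domain. As for what each approach buys: yours eliminates the paper's only use of the (deep) Skolem--Lech--Mahler theorem, replacing it with elementary linear and commutative algebra, and it proves the stronger statement that the ambient sequence ring $\mathcal{E}\cong R[k][G]$ is itself a domain; the paper's route is shorter to write given that Lemma \ref{lem:slm} is available as a citation, since it needs neither the injectivity of the evaluation map nor the group-algebra formalism. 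Both arguments use the torsion-freeness of $G$ at the same essential point.
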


For the proof we use the following

\begin{lem}[Skolem-Lech-Mahler, cf. Nishioka \cite{N}]\label{lem:slm}
Let $C$ be a field of characteristic $0$. Let $\gamma_1,\dots,\gamma_s$ be nonzero distinct elements of $C$ and $P_1(X),\ldots,P_s(X)\in C[X]$ nonzero polynomials. Then, if $\{k\in\N\mid \sum_{i=1}^sP_i(k)\gamma_i^k=0\}$ is an infinite set, then $\gamma_i/\gamma_j$ is a root of unity for some distinct $i,j$.
\end{lem}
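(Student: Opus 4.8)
The plan is to prove this statement by the classical $p$-adic (Skolem--Mahler--Lech) method, whose essential point is that a generalized power sum vanishing at infinitely many integers must, after passage to a suitable arithmetic progression, vanish identically on that progression, and such identical vanishing is incompatible with the $\gamma_i$ being pairwise ``non-degenerate.'' First I would reduce to a manageable field: the subfield $C_0$ of $C$ generated over $\Q$ by $\gamma_1,\ldots,\gamma_s$ together with all coefficients of $P_1,\ldots,P_s$ is finitely generated over $\Q$, and both the hypothesis and the conclusion refer only to elements of $C_0$, so I may replace $C$ by $C_0$. I would then fix an embedding $\iota\colon C_0\hookrightarrow\C_p$, for a suitable prime $p$, under which every $\gamma_i$ becomes a unit, i.e. $|\iota(\gamma_i)|_p=1$; such embeddings of a finitely generated field at a place where finitely many prescribed nonzero elements are units exist for infinitely many $p$. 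Since an element is a root of unity precisely when one of its powers equals $1$, the property ``$\gamma_i/\gamma_j$ is a root of unity'' is preserved, in both directions, by the injective homomorphism $\iota$; hence it suffices to establish the conclusion for the images $\iota(\gamma_i)$ inside $\C_p$, and I therefore suppress $\iota$ and assume henceforth $\gamma_1,\ldots,\gamma_s\in\C_p$ with $|\gamma_i|_p=1$.

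Second, I would set up the $p$-adic interpolation. Working in a finite extension of $\Q_p$ containing the $\gamma_i$ and all coefficients, with valuation ring $\mathcal{O}$ and maximal ideal $\mathfrak{m}$, the residue field is finite, so there is a positive integer $N$ with $\gamma_i^N\in 1+\mathfrak{m}$ for every $i$; enlarging $N$ (replacing it by a suitable multiple of a power of $p$) I may assume $\gamma_i^N\in 1+\mathfrak{p}$ with $\mathfrak{p}$ small enough that the $p$-adic logarithm and exponential are mutually inverse isometries between $1+\mathfrak{p}$ and $\mathfrak{p}$ and that $\exp(t\beta)$ converges for $t\in\Z_p$ whenever $\beta\in\mathfrak{p}$. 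Put $\beta_i:=\log(\gamma_i^N)\in\mathfrak{p}$. Partitioning $\N$ into the residue classes $k=r+Nt$ $(0\le r<N,\ t\ge 0)$, on each class I may write
$$
u_{r+Nt}:=\sum_{i=1}^s P_i(r+Nt)\gamma_i^{r+Nt}=\sum_{i=1}^s Q_{r,i}(t)\exp(t\beta_i),\qquad Q_{r,i}(t):=P_i(r+Nt)\gamma_i^{r},
$$
and the right-hand side defines a $p$-adic analytic function $f_r(t)$ of $t\in\Z_p$. By Strassmann's theorem each $f_r$ is either identically zero or has only finitely many zeros in $\Z_p$. If $u_k=0$ for infinitely many $k\in\N$, then by the pigeonhole principle some class $r_0$ contains infinitely many such $k$, whence $f_{r_0}$ vanishes at infinitely many points of $\N\subseteq\Z_p$ and so $f_{r_0}\equiv 0$; in particular $u_{r_0+Nt}=0$ for every $t\in\N$.

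Third comes the purely algebraic conclusion. On the full arithmetic progression we have the identity $\sum_{i=1}^s Q_{r_0,i}(t)\,(\gamma_i^N)^t=0$ for all $t\ge 0$, a generalized power sum in $t$ with bases $\gamma_i^N$. Suppose, for contradiction, that no ratio $\gamma_i/\gamma_j$ $(i\ne j)$ is a root of unity; then $\gamma_i^N\ne\gamma_j^N$ for $i\ne j$, so the bases $\gamma_i^N$ are pairwise distinct. By the standard linear independence of exponential polynomials with distinct bases (equivalently, a generalized Vandermonde argument), the vanishing of this power sum for all $t\ge 0$ forces every coefficient polynomial $Q_{r_0,i}$ to be identically zero, hence $P_i\equiv 0$ for all $i$, contradicting the hypothesis that the $P_i$ are nonzero. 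Therefore $\gamma_i^N=\gamma_j^N$ for some $i\ne j$, i.e. $(\gamma_i/\gamma_j)^N=1$, so $\gamma_i/\gamma_j$ is a root of unity, as required.

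The main obstacle is the embedding step of the first paragraph: producing, for a field finitely generated over $\Q$, a $p$-adic place at which finitely many prescribed nonzero elements are simultaneously units. Over a number field this is immediate (one avoids the finitely many primes dividing the relevant norms), but in positive transcendence degree it requires a genuine specialization argument: one realizes a transcendence basis as algebraically independent $p$-adic units in $\C_p$ and extends across the residual finite algebraic extension, choosing the specialization generically so that the finitely many $\gamma_i$ avoid the lower-dimensional loci on which they would fail to be units. Everything afterwards — the convergence bookkeeping for $\exp$ and $\log$, the invocation of Strassmann's theorem, and the Vandermonde-type linear independence — is routine.
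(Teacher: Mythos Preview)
The paper does not prove this lemma at all; it is stated with the attribution ``Skolem--Lech--Mahler, cf.\ Nishioka~\cite{N}'' and used as a black box in the proof of Lemma~\ref{lem:prime}. So there is nothing in the paper to compare your argument against.

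Your proposal is the standard $p$-adic proof and is essentially correct. A couple of small points worth tightening: (i) after the embedding you write ``working in a finite extension of $\Q_p$ containing the $\gamma_i$ and all coefficients,'' but this does not follow merely from having landed in $\C_p$; it is part of what the Cassels--Lech embedding theorem actually gives you (an embedding of the finitely generated field into some $\Q_p$, with the prescribed elements going to units), so you should invoke that theorem explicitly rather than frame it as a choice made afterward; (ii) the ``generalized Vandermonde'' step at the end deserves one line of justification: if $\sum_i Q_i(t)\delta_i^t=0$ for all $t\ge 0$ with the $\delta_i$ pairwise distinct and some $Q_{i_0}\not\equiv 0$, apply the forward shift operator $E-\delta_j$ for each $j\ne i_0$ enough times to kill the other terms and reduce to a single nonzero polynomial times $\delta_{i_0}^t$, a contradiction. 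With those two clarifications the argument is complete; the embedding step you flag as the main obstacle is indeed the only nontrivial ingredient, and it is exactly Cassels' theorem (or Lech's original argument).
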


\begin{proof}[Proof of Lemma \ref{lem:prime}]
We define a subset $\mathcal{R}_1$ of $(K[\bm{y};\bm{x}'])^\N$ by
$$\mathcal{R}_1:=\Set{\left\{\sum_{\gamma\in\Gamma}p_{\gamma}(k)\gamma^k\right\}_{k\geq0}\ | \begin{array}{l}\Gamma {\rm\ is\ a\ finite\ subset\ of\ }G{\rm\ independent\ of\ }k,\\p_\gamma(Y)\in(K[\bm{y};\bm{x}'])[Y]\ (\gamma\in\Gamma)\end{array}}.$$
Then $\mathcal{R}_1$ forms a commutative ring including $K[\bm{y};\bm{x}']$ under termwise addition and multiplication. If we put $A^k=:(a_{ij}^{(k)})$, then $\{a_{ij}^{(k)}\}_{k\geq0}\in \mathcal{R}_1$ for any $1\leq i,j\leq l$. Since $T_{\bl\m}(\bm{\tau};X;\bm{y};\bm{x}')\in(\Z[\bm{y};\bm{x}'])[\{x_{ij}\}]$, we have $\{T_{\bl\m}(\bm{\tau};A^k;\bm{y};\bm{x}')\}_{k\geq0}\in \mathcal{R}_1$ for any $\bl\in\La$ and $\m\in\M$. Therefore, if $P(\bt)\in K[\bt]$, then $\{P(\bm{T}(\bm{\tau};A^k;\bm{y};\bm{x}'))\}_{k\geq0}\in \mathcal{R}_1$, so that there exist a finite subset $\Gamma=\Gamma(P)$ of $G$ and nonzero polynomials $p_{\gamma}(Y)\in(K[\bm{y};\bm{x}'])[Y]$ $(\gamma\in\Gamma)$ such that
$$P(\bm{T}(\bm{\tau};A^k;\bm{y};\bm{x}'))=\sum_{\gamma\in\Gamma}p_\gamma(k)\gamma^k$$
for all $k\geq0$.

To prove the lemma, we let $P_1(\bt),P_2(\bt)\in K[\bt]$ and suppose that $P_1(\bt)P_2(\bt)\in V(\bm{\tau})$. Since $P_1(\bm{T}(\bm{\tau};A^k;\bm{y};\bm{x}'))P_2(\bm{T}(\bm{\tau};A^k;\bm{y};\bm{x}'))=0$ for all $k\geq0$, we may assume that $P_1(\bm{T}(\bm{\tau};A^k;\bm{y};\bm{x}'))=0$ for infinitely many $k$. Hence, if $\Gamma(P_1)\neq\emptyset$, then Lemma \ref{lem:slm} implies that there exist distinct $\gamma,\gamma'\in\Gamma(P_1)$ such that $\gamma/\gamma'$ is a root of unity, which contradicts the fact that $G$ is torsion free. Thus $\Gamma(P_1)=\emptyset$ and $P_1(\bt)\in V(\bm{\tau})$.
\end{proof}

\begin{prop}\label{prop:nonzero}
The following two conditions are equivalent for any $P(\z;\bt)\in K[\z;\bt]$.
\begin{enumerate}
\item $P(\Omega^k\ba;\bm{T}(\bm{\tau};A^k;\bb^{(k)}(\ba);\e^{(k)}(\ba)))=0$ for all sufficiently large $k$.
\item If we put $P(\z;\bt)=:\sum_{\bm{\eta}\in\Eta}Q_{\bm{\eta}}(\bt)\z^{\bm{\eta}}$, where $Q_{\bm{\eta}}(\bt)\in K[\bt]$ $(\eta\in\Eta)$ and $\Eta$ is a finite subset of $\N^n$, then $Q_{\bm{\eta}}(\bt)\in V(\bm{\tau})$ for any $\bm{\eta}\in\Eta$.
\end{enumerate}
\end{prop}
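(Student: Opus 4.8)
I would prove the two implications separately. The implication (ii)$\Rightarrow$(i) is immediate: if $Q_{\bm{\eta}}(\bt)\in V(\bm{\tau})$ for every $\bm{\eta}\in\Eta$, then by the definition of $V(\bm{\tau})$ we have $Q_{\bm{\eta}}(\bm{T}(\bm{\tau};A^k;\bm{y};\bm{x}'))=0$ identically in $\bm{y},\bm{x}'$ for all $k\geq0$, so specializing $\bm{y}=\bb^{(k)}(\ba)$, $\bm{x}'=\e^{(k)}(\ba)$ and summing $Q_{\bm{\eta}}(\cdots)(\Omega^k\ba)^{\bm{\eta}}$ over $\bm{\eta}\in\Eta$ gives $P(\Omega^k\ba;\bm{T}(\bm{\tau};A^k;\bb^{(k)}(\ba);\e^{(k)}(\ba)))=0$ for every $k\geq0$, which is stronger than (i). The substance is the converse (i)$\Rightarrow$(ii), which I would deduce from Lemma \ref{lem:crucial}.

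For (i)$\Rightarrow$(ii), the first step is to rewrite the left-hand side of (i) in the shape required by Lemma \ref{lem:crucial}. By \eqref{eq:fk} and \eqref{eq:gk} one has $\bb^{(k)}(\ba)=\f(\ba)-A^k\f(\Omega^k\ba)$ and $e_h^{(k)}(\ba)=g_h(\ba)/g_h(\Omega^k\ba)$ $(1\leq h\leq m)$, the denominators being nonzero for all $k$. I would substitute these into each coefficient $Q_{\bm{\eta}}(\bm{T}(\bm{\tau};A^k;\bb^{(k)}(\ba);\e^{(k)}(\ba)))$ and expand the entries of $A^k$ as exponential polynomials in $k$ with bases in the torsion-free group $G$; collecting equal bases, each coefficient becomes a finite sum $\sum_{i,j}k^{j-1}\gamma_i^k\,H_{\bm{\eta},i,j}(\Omega^k\ba)$ with $\gamma_i$ nonzero and distinct in $\C_p$, where $H_{\bm{\eta},i,j}\in\Qbar[\bm{W},\bm{U}]$ are polynomials in auxiliary variables $\bm{W}=(W_1,\ldots,W_l)$, $\bm{U}=(U_1,\ldots,U_m)$, and I write $H_{\bm{\eta},i,j}(\z):=H_{\bm{\eta},i,j}(f_1(\z),\ldots,f_l(\z),g_1(\z)^{-1},\ldots,g_m(\z)^{-1})$. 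As $g_h(\bm{0})\neq0$, these $H_{\bm{\eta},i,j}(\z)$ converge near the origin; hence, after multiplying by $(\Omega^k\ba)^{\bm{\eta}}$ and summing over $\bm{\eta}\in\Eta$, the left-hand side of (i) equals $\psi(\Omega^k\ba;k)$ for $\psi(\z;x)=\sum_{i,j}x^{j-1}\gamma_i^x h_{ij}(\z)$ with $h_{ij}(\z)=\sum_{\bm{\eta}\in\Eta}\z^{\bm{\eta}}H_{\bm{\eta},i,j}(\z)$. Since (i) asserts $\psi(\Omega^k\ba;k)=0$ for all large $k$, Lemma \ref{lem:crucial} forces $h_{ij}(\z)\equiv0$ for every $i,j$.

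It remains to pass from $h_{ij}\equiv0$ to $Q_{\bm{\eta}}\in V(\bm{\tau})$, and here the standing assumption of Theorem \ref{thm:indep_values}, that $f_1(\z),\ldots,f_l(\z),g_1(\z),\ldots,g_m(\z)$ are algebraically independent over $K(z_1,\ldots,z_n)$, is essential. Because $\Qbar(z_1,\ldots,z_n)$ is algebraic over $K(z_1,\ldots,z_n)$ and replacing each $g_h$ by $g_h^{-1}$ does not change the generated field, the $n+l+m$ elements $z_1,\ldots,z_n,f_1(\z),\ldots,f_l(\z),g_1(\z)^{-1},\ldots,g_m(\z)^{-1}$ are algebraically independent over $\Qbar$. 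Reading $h_{ij}(\z)=\sum_{\bm{\eta}}\z^{\bm{\eta}}H_{\bm{\eta},i,j}(\z)\equiv0$ as a polynomial relation among these elements and collecting by the distinct monomials $\z^{\bm{\eta}}$, I would obtain $H_{\bm{\eta},i,j}\equiv0$ in $\Qbar[\bm{W},\bm{U}]$ for all $\bm{\eta}\in\Eta$ and all $i,j$. Finally, for each fixed $k$ the assignment $\bm{y}=\f(\ba)-A^k\bm{W}$, $x'_h=g_h(\ba)U_h$ $(1\leq h\leq m)$ is an invertible affine change of variables, since $\det A\neq0$ and $g_h(\ba)\neq0$, under which $Q_{\bm{\eta}}(\bm{T}(\bm{\tau};A^k;\bm{y};\bm{x}'))$ is carried precisely to $\sum_{i,j}k^{j-1}\gamma_i^k H_{\bm{\eta},i,j}(\bm{W},\bm{U})$; as all $H_{\bm{\eta},i,j}$ vanish, this is the zero polynomial for every $k$, giving $Q_{\bm{\eta}}(\bm{T}(\bm{\tau};A^k;\bm{y};\bm{x}'))\equiv0$ for all $k\geq0$, that is, $Q_{\bm{\eta}}\in V(\bm{\tau})$.

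The main obstacle I anticipate is the bookkeeping of the first step: one must verify that, after substituting the functional relations, the expression genuinely separates into the exponential-polynomial factor $k^{j-1}\gamma_i^k$ (with the $\gamma_i$ distinct, which is exactly why $G$ was arranged to be torsion free) and the origin-convergent power-series factor $h_{ij}(\z)$, so that Lemma \ref{lem:crucial} applies verbatim. A second delicate point is that inside each $h_{ij}$ the monomials $\z^{\bm{\eta}}$ are entangled with the power series $f_{j'}(\z),g_h(\z)^{-1}$, and the separation needed to isolate the individual $Q_{\bm{\eta}}$ is recovered only through the algebraic independence of $z_1,\ldots,z_n,f_1(\z),\ldots,f_l(\z),g_1(\z)^{-1},\ldots,g_m(\z)^{-1}$ over $\Qbar$.
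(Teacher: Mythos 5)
Your overall route is the same as the paper's: the trivial direction (ii)$\Rightarrow$(i); then, for (i)$\Rightarrow$(ii), rewriting $Q_{\bm{\eta}}(\bm{T}(\bm{\tau};A^k;\bb^{(k)}(\ba);\e^{(k)}(\ba)))$ via \eqref{eq:fk} and \eqref{eq:gk} as an exponential polynomial in $k$ whose ``coefficients'' are power series in $\f(\z)$ and $\g(\z)^{-1}$ evaluated at $\Omega^k\ba$; applying Lemma \ref{lem:crucial} to kill the series $h_{ij}(\z)$; and finally undoing the affine substitution $\bm{y}=\f(\ba)-A^k\bm{w}$, $x'_h=g_h(\ba)/w'_h$ (legitimate since $\det A\neq0$ and $g_h(\ba)\neq0$) to get $Q_{\bm{\eta}}\in V(\bm{\tau})$. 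However, there is one genuine gap, in the coefficient-field bookkeeping of the last step.

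You assert that $H_{\bm{\eta},i,j}\in\Qbar[\bm{W},\bm{U}]$, and accordingly you deduce $H_{\bm{\eta},i,j}\equiv0$ from $h_{ij}\equiv0$ using only the algebraic independence of $z_1,\ldots,z_n,f_1(\z),\ldots,f_l(\z),g_1(\z)^{-1},\ldots,g_m(\z)^{-1}$ over $\Qbar$. This is not enough, because the claim $H_{\bm{\eta},i,j}\in\Qbar[\bm{W},\bm{U}]$ is false in general: the coefficients of $H_{\bm{\eta},i,j}$ involve not only elements of $K$ and the exponential-polynomial data of $A^k$, but also the numbers $f_1(\ba),\ldots,f_l(\ba),g_1(\ba),\ldots,g_m(\ba)$, which enter through $\bb^{(k)}(\ba)=\f(\ba)-A^k\f(\Omega^k\ba)$ and $\e^{(k)}(\ba)=\g(\ba)/\g(\Omega^k\ba)$. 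These values lie in $\Qbar_p$ and are precisely the numbers whose algebraic independence the ambient Theorem \ref{thm:indep_values} is trying to establish; one certainly cannot treat them as algebraic. Consequently $h_{ij}(\z)=\sum_{\bm{\eta}}\z^{\bm{\eta}}H_{\bm{\eta},i,j}(\z)\equiv0$ is a relation with coefficients in $\Qbar_p$, and to conclude that each coefficient vanishes one needs the monomials $\z^{\bm{\eta}}\f(\z)^{\bm{\nu}}\g(\z)^{-\bm{\xi}}$ to be linearly independent over $\Qbar_p$, i.e.\ one needs $f_1(\z),\ldots,f_l(\z),g_1(\z),\ldots,g_m(\z)$ to be algebraically independent over $\Qbar_p(z_1,\ldots,z_n)$. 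Your justification --- that $\Qbar(z_1,\ldots,z_n)$ is algebraic over $K(z_1,\ldots,z_n)$ --- only upgrades the ground field from $K$ to $\Qbar$, which is far too small; $\Qbar_p$ is a huge transcendental extension of $K$, so no algebraicity argument can reach it. The missing ingredient is exactly the lemma the paper invokes (cf.\ Nishioka \cite[p.~6]{N}): power series with coefficients in $K$ that are algebraically independent over $K(z_1,\ldots,z_n)$ remain algebraically independent over $\Qbar_p(z_1,\ldots,z_n)$; this is proved by a linear-algebra/specialization argument on the coefficients of a hypothetical relation, not by field extensions. With that lemma inserted, your argument closes as in the paper. (A minor further point: the distinctness of the $\gamma_i$ comes simply from collecting equal bases; the torsion-freeness of $G$ is what makes Lemma \ref{lem:prime} work via Skolem--Lech--Mahler, and is not what this proposition's proof needs.)
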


\begin{proof}
We only prove that the condition (i) implies (ii) since the converse is trivial. We define a subset $\mathcal{R}_2$ of $(\Qbar_p[w_1,\ldots,w_l,\frac{1}{w'_1},\ldots,\frac{1}{w'_m}])^\N$ by
\begin{gather*}
\mathcal{R}_2:=\Set{\left\{\sum_{\gamma\in\Gamma}q_{\gamma}(k)\gamma^k\right\}_{k\geq0}\ | \begin{array}{l}\Gamma {\rm\ is\ a\ finite\ subset\ of\ }G{\rm\ independent\ of\ }k,\\q_\gamma(Y)\in (\Qbar_p[w_1,\ldots,w_l,\frac{1}{w'_1},\ldots,\frac{1}{w'_m}])[Y]\ (\gamma\in\Gamma)\end{array}}\\
=\Set{\left\{\sum_{\substack{\bm{\nu}\in\Nu\\\bm{\xi}\in\X}}\left(\sum_{\gamma\in\Gamma}r_{\bm{\nu}\bm{\xi}\gamma}(k)\gamma^k\right)\bm{w}^{\bm{\nu}}\bm{w}'^{-\bm{\xi}}\right\}_{k\geq0}\ |\begin{array}{l}\Nu\subset\mathbb{N}^l,\ \X\subset\mathbb{N}^m,\ \Gamma\subset G\\{\rm are\ finite\ sets \ independent\ of\ } k,\\r_{\bm{\nu}\bm{\xi}\gamma}(Y)\in\Qbar_p[Y]\\(\bm{\nu}\in\Nu,\ \bm{\xi}\in\X,\ \gamma\in\Gamma)\end{array}}.
\end{gather*}
Then $\mathcal{R}_2$ forms a commutative ring including $\Qbar_p[w_1,\ldots,w_l,\frac{1}{w'_1},\ldots,\frac{1}{w'_m}]$ under term-wise addition and multiplication. In the same way as in the proof of Lemma \ref{lem:prime}, we see that $\{Q_{\bm{\eta}}(\bm{T}(\bm{\tau};A^k;\f(\ba)-A^k\bm{w};\g(\ba)/\bm{w}'))\}_{k\geq0}\in \mathcal{R}_2$ for any $\bm{\eta}\in\Eta$, where $\g(\ba)/\bm{w}':={}^t(g_1(\ba)/w'_1,\ldots,g_m(\ba)/w'_m)$. Hence there exist finite sets $\Nu\subset\N^l$ and $\X\subset\N^m$, distinct elements $\gamma_1,\ldots,\gamma_q$ of $G$, and positive integers $d_1,\ldots,d_q$ such that
$$
Q_{\bm{\eta}}(\bm{T}(\bm{\tau};A^k;\f(\ba)-A^k\bm{w};\g(\ba)/\bm{w}'))=\sum_{\substack{\bm{\nu}\in\Nu\\\bm{\xi}\in\X}}R_{\bm{\eta}\bm{\nu}\bm{\xi}}(k)\bm{w}^{\bm{\nu}}\bm{w}'^{-\bm{\xi}}
$$
for all $k\geq0$ and $\bm{\eta}\in\Eta$, where
$$
R_{\bm{\eta}\bm{\nu}\bm{\xi}}(k)=\sum_{i=1}^q\sum_{j=1}^{d_i}r_{\bm{\eta}\bm{\nu}\bm{\xi}ij}k^{j-1}\gamma_i^{k},\quad r_{\bm{\eta}\bm{\nu}\bm{\xi}ij}\in\Qbar_p.
$$

We claim that every $\{R_{\bm{\eta}\bm{\nu}\bm{\xi}}(k)\}_{k\geq0}$ is the null sequence. Since $g_h(\bm{0})\neq0$ $(1\leq h\leq m)$, 
$$h_{ij}(\bm{z}):=\sum_{\bm{\eta}\in\Eta}\sum_{\substack{\bm{\nu}\in\Nu\\\bm{\xi}\in\X}}r_{\bm{\eta}\bm{\nu}\bm{\xi}ij}\f(\z)^{\bm{\nu}}\g(\z)^{-\bm{\xi}}\z^{\bm{\eta}}\quad(1\leq i\leq q,\ 1\leq j\leq d_i)$$
are formal power series in the variables $z_1,\ldots,z_n$ with coefficients in $\Qbar_p$ which converge in an $n$-polydisc around the origin of $\C_p^n$. Define
$$\psi(\z;x):=\sum_{i=1}^q\sum_{j=1}^{d_i}x^{j-1}\gamma_i^{x}h_{ij}(\z).$$
By the condition (i) of the proposition and the functional equations \eqref{eq:fk} and \eqref{eq:gk}, we see that
\begin{align*}
0&=P(\Omega^k\ba;\bm{T}(\bm{\tau};A^k;\bb^{(k)}(\ba);\e^{(k)}(\ba)))\\
&=\sum_{\bm{\eta}\in\Eta}Q_{\bm{\eta}}(\bm{T}(\bm{\tau};A^k;\bb^{(k)}(\ba);\e^{(k)}(\ba)))(\Omega^k\ba)^{\bm{\eta}}\\
&=\sum_{\bm{\eta}\in\Eta}\left(\sum_{\substack{\bm{\nu}\in\Nu\\\bm{\xi}\in\X}}R_{\bm{\eta}\bm{\nu}\bm{\xi}}(k)\f(\Omega^k\ba)^{\bm{\nu}}\g(\Omega^k\ba)^{-\bm{\xi}}\right)(\Omega^k\ba)^{\bm{\eta}}\\
&=\psi(\Omega^k\ba;k)
\end{align*}
for all sufficiently large $k$. Then Lemma \ref{lem:crucial} implies that $h_{ij}(\z)=0$ for any $1\leq i\leq q$ and $1\leq j\leq d_i$. Therefore, noting that $f_1(\z),\ldots,f_l(\z),g_1(\z),\ldots,g_m(\z)$ are algebraically independent over $\Qbar_p(z_1,\ldots,z_n)$ (cf. Nishioka \cite[p. 6]{N}), we have $r_{\bm{\eta}\bm{\nu}\bm{\xi}ij}=0$ for any $\bm{\eta}$, $\bm{\nu}$, $\bm{\xi}$, $i$, and $j$. This proves our claim.

By the claim we have
$$Q_{\bm{\eta}}(\bm{T}(\bm{\tau};A^k;\f(\ba)-A^k\bm{w};\g(\ba)/\bm{w}'))=0$$
for all $k\geq0$ and $\bm{\eta}\in\Eta$. Noting that $\det A\neq0$ and that $g_h(\ba)\neq0$ $(1\leq h\leq m)$, we obtain
$$Q_{\bm{\eta}}(\bm{T}(\bm{\tau};A^k;\bm{y};\bm{x}'))=0$$
for all $k\geq0$ and $\bm{\eta}\in\Eta$, which implies the condition (ii) of the proposition.
\end{proof} 

\begin{defn}{\rm
For $P(\z;\bt)=\sum_{\bm{\eta}\in\N^n}P_{\bm{\eta}}(\bt)\z^{\bm{\eta}}\in (K[\bt])[[z_1,\ldots,z_n]]$ we define
$$\ind P(\z;\bt):=\min\{|\bm{\eta}|\mid P_{\bm{\eta}}(\bt)\notin V(\bm{\tau})\},$$
where $\min\emptyset:=\infty$.}
\end{defn}

In what follows, $c_1,c_2,\ldots$ denote positive constants independent of $N$ and $k$. If they depend on $N$, then we denote them by $c_1(N),c_2(N),\ldots$. We denote by $\mathcal{O}_K$ the ring of algebraic integers of $K$. The following proposition is proved in the same way as in the proof of Proposition 5 in Nishioka \cite{N1996} by using the condition (IV)$_p$, the fact that $F(\z;\bm{\tau})\not\equiv0$, and Lemma \ref{lem:prime}.

\begin{prop}\label{prop:Auxiliary}
Let $N$ be a sufficiently large positive integer. Then there exist $N+1$ polynomials $P_0(\z;\bt),\ldots,P_N(\z;\bt)\in \mathcal{O}_K[\z;\bt]$ with degree at most $N$ in each of the variables $z_i,t_{\bl\m}$ $(1\leq i\leq n,\ \bl\in\La,\ \m\in\M)$ such that the following two conditions are satisfied.
\begin{enumerate}
\item $\ind P_0(\z;\bt)<\infty.$
\item $\ind(\sum_{h=0}^NP_h(\z;\bt)F(\z;\bt)^h)\geq c_1(N+1)^{1+1/n}.$
\end{enumerate}
\end{prop}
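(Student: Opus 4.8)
The plan is to construct the $P_h$ by a Thue--Siegel (pigeonhole) argument, exactly as in the proof of Proposition 5 of Nishioka \cite{N1996}. Write $\delta:=\sharp\La\cdot\sharp\M$ for the (constant, i.e. $N$-independent) number of variables $t_{\bl\m}$, and put $T:=\lceil c_1(N+1)^{1+1/n}\rceil$ for a small constant $c_1$ to be fixed at the end. The unknowns are the $\mathcal{O}_K$-coefficients of $P_0,\ldots,P_N$, each required to have degree $\le N$ in every $z_i$ and every $t_{\bl\m}$; there are
$$U:=(N+1)\,(N+1)^{n}(N+1)^{\delta}=(N+1)^{1+n+\delta}$$
of them. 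Writing $\Phi(\z;\bt):=\sum_{h=0}^{N}P_h(\z;\bt)F(\z;\bt)^{h}=\sum_{\bm\eta}\Phi_{\bm\eta}(\bt)\z^{\bm\eta}$, the conditions I would impose are $\Phi_{\bm\eta}(\bt)\in V(\bm\tau)$ for all $\bm\eta$ with $|\bm\eta|<T$. Each $\Phi_{\bm\eta}$ is $K$-linear in the unknowns (the Taylor coefficients of $\f,\g$ lie in $K$), so this is a homogeneous linear system over $K$, and any nonzero solution forces $\ind\Phi\ge T$, which is (ii).

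The heart of the matter is the count. Since $F$ is homogeneous of degree $1$ in $\bt$, $F^{h}$ has $\bt$-degree $h\le N$, so each $\Phi_{\bm\eta}(\bt)$ has $\bt$-degree at most $N(\delta+1)$. Because $V(\bm\tau)$ is prime (Lemma \ref{lem:prime}), $K[\bt]/V(\bm\tau)$ is an integral domain, and $\Phi_{\bm\eta}\in V(\bm\tau)$ is equivalent to the vanishing of the image of $\Phi_{\bm\eta}$ there. The number of $K$-linear conditions arising from a single $\bm\eta$ is thus bounded by the dimension of the image of the $\bt$-polynomials of degree $\le N(\delta+1)$, hence by the number of such monomials, $\le c_2(N+1)^{\delta}$. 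Therefore the total number of conditions is at most
$$c_3\,T^{n}(N+1)^{\delta}\le c_3c_1^{\,n}(N+1)^{n+1}(N+1)^{\delta}=c_3c_1^{\,n}(N+1)^{1+n+\delta}.$$
Choosing $c_1$ so that $c_3c_1^{\,n}\le 1/2$ keeps the number of conditions below $U/2$, and Siegel's lemma over $K$ then yields a nonzero solution in $\mathcal{O}_K$ of controlled height; this establishes (ii). It is precisely this comparison $T^{n}(N+1)^{\delta}\lesssim(N+1)^{1+n+\delta}$ that squeezes out the exponent $1+1/n$, so the number $n$ of variables $z_i$ must be tracked exactly.

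It remains to secure (i), namely $\ind P_0<\infty$, i.e. that not every coefficient $P_{0,\bm\eta}(\bt)$ lies in $V(\bm\tau)$. I would argue by a dimension comparison. The solution space above has $K$-dimension at least $U/2=(N+1)^{1+n+\delta}/2$. Imposing in addition $P_0\equiv0\pmod{V(\bm\tau)}$ requires, for each of the $(N+1)^{n}$ coefficients $P_{0,\bm\eta}(\bt)$, that its image in $K[\bt]/V(\bm\tau)$ vanish, i.e. at most $c_2(N+1)^{\delta}$ further conditions each, hence at most $c_2(N+1)^{n+\delta}$ in total. Since $V(\bm\tau)$ is a proper ideal (it does not contain the constant $1$), these conditions are nonvacuous, but their number is of strictly smaller order than $U/2$. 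Consequently the solutions with $P_0\in V(\bm\tau)$ form a proper subspace, and a solution with $\ind P_0<\infty$ exists. That $\Phi$ is genuinely nonzero modulo $V(\bm\tau)$, so that $\ind\Phi$ records real information, is where the hypothesis $F(\z;\bm\tau)\not\equiv0$ and the primality of $V(\bm\tau)$ enter, exactly as in Nishioka \cite{N1996}.

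The step I expect to be most delicate is reconciling (i) and (ii). The reduction of the vanishing requirements to a finite-dimensional $K$-linear system of the claimed size rests on $V(\bm\tau)$ being prime (Lemma \ref{lem:prime}) together with the Skolem--Lech--Mahler lemma (Lemma \ref{lem:slm}) and the condition (IV)$_p$, which guarantee that the sequences indexed by $k$ behave as prescribed and that $V(\bm\tau)$ is the correct ideal of relations; any slack here would spoil the exponent. Guaranteeing $\ind P_0<\infty$ while simultaneously keeping $\ind\Phi\ge T$ --- that is, knowing the high-index conditions do not by themselves drive $P_0$ into $V(\bm\tau)$ --- is the crux, and it is exactly here that the properness and primality of $V(\bm\tau)$ and the non-vanishing $F(\z;\bm\tau)\not\equiv0$ are indispensable.
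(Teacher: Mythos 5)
Your setup and your count for condition (ii) coincide with the route the paper itself takes: the paper gives no self-contained argument but invokes Proposition 5 of Nishioka \cite{N1996}, which is exactly this pigeonhole construction with the membership conditions $\Phi_{\bm{\eta}}(\bt)\in V(\bm{\tau})$ for $|\bm{\eta}|<T$ imposed as $K$-linear conditions on the unknown coefficients, and the comparison $T^{n}(N+1)^{\delta}\lesssim(N+1)^{1+n+\delta}$ (in your notation $\delta=\sharp\La\cdot\sharp\M$) producing the exponent $1+1/n$. Note, however, that once these conditions hold, (ii) is automatic for \emph{every} solution, zero or not; so the entire content of the proposition is condition (i), and that is precisely where your argument breaks down.

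The gap in your treatment of (i) is that the dimension count runs in the wrong direction. Let $S$ be the solution space of the conditions for (ii) and $S_{0}\subseteq S$ the subspace of solutions with all coefficients of $P_{0}$ in $V(\bm{\tau})$. You show that $S_{0}$ is cut out of $S$ by at most $c_{2}(N+1)^{n+\delta}$ further linear conditions and conclude that $S_{0}$ is a \emph{proper} subspace of $S$; but a bound on the number of extra conditions only yields the \emph{lower} bound $\dim S_{0}\geq\dim S-c_{2}(N+1)^{n+\delta}>0$, i.e.\ it proves that nonzero solutions \emph{with} $P_{0}\equiv0\pmod{V(\bm{\tau})}$ exist. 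It cannot show $S_{0}\neq S$, since the extra conditions may be identically satisfied on $S$ (linearly dependent on the conditions defining $S$); properness of $V(\bm{\tau})$ makes them nonvacuous on the full coefficient space, not on $S$. What is actually needed, and what Nishioka's proof does, is a comparison of $S$ with the subspace $Z$ of tuples \emph{all} of whose components are $\equiv0\pmod{V(\bm{\tau})}$. Since $V(\bm{\tau})$ is an ideal, $Z\subseteq S$ automatically, and the codimension of $Z$ equals $(N+1)^{1+n}H(N)$, where $H(D)$ is the dimension of the image of the polynomials of degree at most $D$ in $K[\bt]/V(\bm{\tau})$, while the number of conditions defining $S$ is at most $cT^{n}H(2N)$ (here one must count via $H$, not via the crude monomial count $c_{2}(N+1)^{\delta}$, which is too large when $V(\bm{\tau})$ is big). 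Since $H$ has polynomial growth, $H(2N)\leq c'H(N)$, so for $c_{1}$ small enough one gets $\dim S>\dim Z$, hence a solution with $P_{h_{0}}\not\equiv0\pmod{V(\bm{\tau})}$ for some $h_{0}$. Finally one reduces modulo $V(\bm{\tau})$: the ring $(K[\bt]/V(\bm{\tau}))[[z_{1},\ldots,z_{n}]]$ is an integral domain precisely because $V(\bm{\tau})$ is prime (Lemma \ref{lem:prime}), and $\bar{F}:=F\bmod V(\bm{\tau})$ is nonzero of finite order $\mu_{0}\leq\ord_{\z}F(\z;\bm{\tau})$ because $F(\z;\bm{\tau})\not\equiv0$; factoring $\bar{F}^{h_{0}}$ out of $\sum_{h\geq h_{0}}\bar{P}_{h}\bar{F}^{h}$ and replacing the tuple by $(P_{h_{0}},\ldots,P_{N},0,\ldots,0)$ yields (i), while the index in (ii) drops only by $h_{0}\mu_{0}\leq N\mu_{0}=o\bigl((N+1)^{1+1/n}\bigr)$, which is absorbed by shrinking $c_{1}$. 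These two steps --- the comparison with $Z$ and the factor-and-shift argument --- are exactly where the primality of $V(\bm{\tau})$ and the nonvanishing $F(\z;\bm{\tau})\not\equiv0$ do real work; your proposal names both as indispensable but never uses either, and without them condition (i), on which the final contradiction relies through Propositions \ref{prop:nonzero}, \ref{prop:UB} and \ref{prop:LB}, is not established.
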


Let $E(\z;\bt)$ be the $\sum_{h=0}^NP_h(\z;\bt)F(\z;\bt)^h$ in Proposition \ref{prop:Auxiliary} and $\rho$ the maximum of the archimedean absolute values of the eigenvalues of $\Omega$.

\begin{prop}\label{prop:UB}
If $k>c_2(N)$, then
$$\log|E(\Omega^k\ba;\bm{T}(\bm{\tau};A^k;\bb^{(k)}(\ba);\e^{(k)}(\ba)))|_p\leq -c_3(N+1)^{1+1/n}\rho^k.$$
\end{prop}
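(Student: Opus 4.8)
The plan is to view $E(\z;\bt)$ as a power series in $\z$ whose coefficients are polynomials in $\bt$, exploit the large index furnished by Proposition \ref{prop:Auxiliary} to annihilate all low-order terms after substitution, and estimate the surviving tail by means of the $p$-adic smallness of $\Omega^k\ba$ coming from condition (III)$_p$. Concretely, I would write $E(\z;\bt)=\sum_{\bm{\eta}\in\N^n}E_{\bm{\eta}}(\bt)\z^{\bm{\eta}}$ with $E_{\bm{\eta}}(\bt)\in\mathcal{O}_K[\bt]$ and set $I:=c_1(N+1)^{1+1/n}$, the lower bound for $\ind E(\z;\bt)$ given by Proposition \ref{prop:Auxiliary}(ii). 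By the definition of the index, $E_{\bm{\eta}}(\bt)\in V(\bm{\tau})$ whenever $|\bm{\eta}|<I$, so by the definition of $V(\bm{\tau})$ the identity $E_{\bm{\eta}}(\bm{T}(\bm{\tau};A^k;\bm{y};\bm{x}'))=0$ holds as a polynomial in $\bm{y},\bm{x}'$ for every $k\geq0$. Since $\bm{T}$ is a polynomial in $\bm{y}$ and $\bm{x}'$, the specialization $\bm{y}=\bb^{(k)}(\ba)$, $\bm{x}'=\e^{(k)}(\ba)$ is legitimate and kills these coefficients. As $\Omega^k\ba\in U$ for all $k\geq0$, evaluating the convergent series termwise gives
\begin{equation*}
E(\Omega^k\ba;\bm{T}(\bm{\tau};A^k;\bb^{(k)}(\ba);\e^{(k)}(\ba)))=\sum_{|\bm{\eta}|\geq I}E_{\bm{\eta}}(\bm{T}(\bm{\tau};A^k;\bb^{(k)}(\ba);\e^{(k)}(\ba)))(\Omega^k\ba)^{\bm{\eta}}.
\end{equation*}

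Next I would bound the two factors in each surviving term. Writing $\Omega^k\ba=(\alpha_1^{(k)},\ldots,\alpha_n^{(k)})$, condition (III)$_p$ yields $\log|(\Omega^k\ba)^{\bm{\eta}}|_p=\sum_{i=1}^n\eta_i\log|\alpha_i^{(k)}|_p\leq-c|\bm{\eta}|\rho^k$ for all large $k$, where $c>0$ is the constant of (III)$_p$. For the coefficients, the entries of $A^k$, the components of $\bb^{(k)}(\ba)=\sum_{j=0}^{k-1}A^j\bb(\Omega^j\ba)$, and the values $e_h^{(k)}(\ba)=\prod_{j=0}^{k-1}e_h(\Omega^j\ba)$ all have $p$-adic absolute value at most $c_4^{\,k}$ for a suitable $c_4$, because $\Omega^j\ba\to\bm{0}$ keeps $\bb(\Omega^j\ba)$ and $e_h(\Omega^j\ba)$ bounded. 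Since $E_{\bm{\eta}}(\bt)$ is the $\z^{\bm{\eta}}$-coefficient of a power series that converges on a fixed polydisc of polyradius $r$ contained in $U$ (as $\f(\z),\g(\z)$ converge there and the $P_h$ are polynomials), a maximum-modulus estimate on that polydisc, together with the polynomial degree $O(N)$ of $E$ in $\bt$ and the preceding size bounds, gives $|E_{\bm{\eta}}(\bm{T}(\bm{\tau};A^k;\bb^{(k)}(\ba);\e^{(k)}(\ba)))|_p\leq c_5(N)^k\,r^{-|\bm{\eta}|}$.

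Combining the two bounds, each surviving term is at most $c_5(N)^k(r^{-1}e^{-c\rho^k})^{|\bm{\eta}|}$. For $k$ large the base satisfies $r^{-1}e^{-c\rho^k}\leq e^{-(c/2)\rho^k}\leq1/2$, and since the number of $\bm{\eta}$ with $|\bm{\eta}|=d$ grows only polynomially in $d$, the tail converges and is dominated by its leading term at $|\bm{\eta}|=I$, giving
\begin{equation*}
\left|E(\Omega^k\ba;\bm{T}(\bm{\tau};A^k;\bb^{(k)}(\ba);\e^{(k)}(\ba)))\right|_p\leq c_6(N)^k\,I^{\,n-1}e^{-(c/2)I\rho^k}.
\end{equation*}
Taking logarithms produces $-(c/2)I\rho^k$ together with error terms of order $k$ and $\log I$; because $\rho>1$, the factor $\rho^k$ outgrows every power of $k$, so for all $k>c_2(N)$ the main term dominates and one obtains $\log|E(\ldots)|_p\leq-c_3(N+1)^{1+1/n}\rho^k$ with $c_3:=c\,c_1/4$, which is the assertion.

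The main obstacle is the uniform coefficient estimate $|E_{\bm{\eta}}(\bm{T}(\ldots))|_p\leq c_5(N)^k r^{-|\bm{\eta}|}$: one must control the growth in $k$ of the substituted arguments $A^k$, $\bb^{(k)}(\ba)$, $\e^{(k)}(\ba)$ simultaneously with the decay $r^{-|\bm{\eta}|}$ coming from convergence, so that after multiplying by $|(\Omega^k\ba)^{\bm{\eta}}|_p$ the whole tail is governed by its leading index $|\bm{\eta}|=I$. Once this balance is secured, the comparison of $\rho^k$ against the error terms is routine, and the argument is the $p$-adic analogue of the upper-bound step in Nishioka \cite{N1996}.
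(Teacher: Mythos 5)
Your proposal is correct and follows essentially the same route as the paper's proof: use the index bound from Proposition \ref{prop:Auxiliary}(ii) together with the definition of $V(\bm{\tau})$ to annihilate all terms of order below $I$ after substitution, bound the surviving coefficients by Cauchy-type estimates combined with at-most-exponential-in-$k$ bounds on $A^k$, $\bb^{(k)}(\ba)$, $\e^{(k)}(\ba)$, and let condition (III)$_p$ supply the decay $\theta^{I\rho^k}$ that dominates everything for $k>c_2(N)$. One small caveat: the boundedness of $\bb(\Omega^j\ba)$ and $e_h(\Omega^j\ba)$ does not follow merely from $\Omega^j\ba\to\bm{0}$ (a rational function can blow up along a sequence approaching its pole set through the origin); it should be justified, as the paper does, via the functional equations themselves, e.g. $e_h^{(k)}(\ba)=g_h(\ba)/g_h(\Omega^k\ba)$ with $g_h(\Omega^k\ba)\to g_h(\bm{0})\neq0$ and $\bb^{(k)}(\ba)=\f(\ba)-A^k\f(\Omega^k\ba)$ with $\f(\Omega^k\ba)$ bounded.
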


\begin{proof}
Since $f_j(\Omega^k\ba)\to f_j(\bm{0})$ $(k\to\infty)$ for $1\leq j\leq l$, by the functional equation \eqref{eq:fk} we have $|b_i^{(k)}(\ba)|_p\leq c_4^k$ for $1\leq i\leq l$. Similarly, since $g_h(\Omega^k\ba)\to g_h(\bm{0})\neq0$ $(k\to\infty)$ for $1\leq h\leq m$, by the functional equation \eqref{eq:gk} we have $|e_h^{(k)}(\ba)|_p\leq c_5$ for $1\leq h\leq m$. Hence $|T_{\bl\m}(\bm{\tau};A^k;\bb^{(k)}(\ba);\e^{(k)}(\ba))|_p\leq c_6^k$ for $\bl\in\La$ and $\m\in\M$. We note that $E(\z;\bt)$ is a polynomial in the variables $t_{\bl\m}$ $(\bl\in\La,\ \m\in\M)$ with degree at most $2N$ in each variable whose coefficients are power series convergent in $U$. Let
$$E(\z;\bt)=:\sum_{\bm{\nu}\in\{0,1,\ldots,2N\}^s}h_{\bm{\nu}}(\z)\bt^{\bm{\nu}},\quad h_{\bm{\nu}}(\z)=:\sum_{\bm{\xi}\in\N^n}h_{\bm{\nu}\bm{\xi}}\z^{\bm{\xi}}\in K[[\z]],$$
where $s:=\sharp\La\times\sharp\M=\binom{L+l}{l}(L+1)^m$. Then we have
$$|h_{\bm{\nu}\bm{\xi}}|_p\leq c_7(N)c_8^{|\bm{\xi}|}\quad (\bm{\nu}\in\{0,1,\ldots,2N\}^s,\ \bm{\xi}\in\N^n)$$
and
$$E(\z;\bt)=\sum_{\bm{\xi}\in\N^n}\left(\sum_{\bm{\nu}\in\{0,1,\ldots,2N\}^s}h_{\bm{\nu}\bm{\xi}}\bt^{\bm{\nu}}\right)\z^{\bm{\xi}}.$$
Therefore
$$
|E(\Omega^k\ba;\bm{T}(\bm{\tau};A^k;\bb^{(k)}(\ba);\e^{(k)}(\ba)))|_p\leq c_9(N)c_{10}^{Nk}\sum_{\substack{\bm{\xi}\in\N^n\\|\bm{\xi}|\geq I}}c_8^{|\bm{\xi}|}|(\Omega^k\ba)^{\bm{\xi}}|_p,
$$
where $I:=\ind E(\z;\bt)$. By the condition (III)$_p$, there exists a positive constant $\theta<1$ such that $|\alpha_i^{(k)}|_p\leq\theta^{\rho^k}$ for $1\leq i\leq n$ and for all sufficiently large $k$. Hence
\begin{align*}
|E(\Omega^k\ba;\bm{T}(\bm{\tau};A^k;\bb^{(k)}(\ba);\e^{(k)}(\ba)))|_p&\leq c_9(N)c_{10}^{Nk}\sum_{i=1}^n\ \sum_{\substack{\bm{\xi}=(\xi_1,\ldots,\xi_n)\in\N^n\\\xi_i\geq I/n}}(c_8\theta^{\rho^k})^{|\bm{\xi}|}\\
&\leq nc_9(N)c_{10}^{Nk}(c_8\theta^{\rho^k})^{I/n}/(1-c_8\theta^{\rho^k})^n.
\end{align*}
Since $I\geq c_1(N+1)^{1+1/n}$ by the condition (ii) of Proposition \ref{prop:Auxiliary}, we see that, if $k\geq c_2(N)$, then
$$\log|E(\Omega^k\ba;\bm{T}(\bm{\tau};A^k;\bb^{(k)}(\ba);\e^{(k)}(\ba)))|_p\leq -c_3(N+1)^{1+1/n}\rho^k.$$
\end{proof}

\begin{prop}\label{prop:LB}
If $k>c_4(N)$, then
$$\log\|E(\Omega^k\ba;\bm{T}(\bm{\tau};A^k;\bb^{(k)}(\ba);\e^{(k)}(\ba)))\|\leq c_5N\rho^k.$$
\end{prop}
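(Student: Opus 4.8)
The plan is to reduce the quantity in question to the value of a single polynomial with coefficients in $\mathcal{O}_K$ and then to bound its house and its denominator separately; throughout, write $\xi_k:=E(\Omega^k\ba;\bm{T}(\bm{\tau};A^k;\bb^{(k)}(\ba);\e^{(k)}(\ba)))$. First I would show that $\xi_k$ is algebraic and equals $P_0$ evaluated at the same arguments. Iterating \eqref{eq:fk} and \eqref{eq:gk} gives the power-series identity $F(\z;\bt)=F(\Omega^k\z;\bm{T}(\bt;A^k;\bb^{(k)}(\z);\e^{(k)}(\z)))$, and specializing $\z=\ba$, $\bt=\bm{\tau}$ together with \eqref{eq:vanish} yields $F(\Omega^k\ba;\bm{T}(\bm{\tau};A^k;\bb^{(k)}(\ba);\e^{(k)}(\ba)))=F(\ba;\bm{\tau})=0$ for every $k\geq0$. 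Since $E(\z;\bt)=\sum_{h=0}^N P_h(\z;\bt)F(\z;\bt)^h$, each term with $h\geq1$ vanishes under this substitution, so $\xi_k=P_0(\Omega^k\ba;\bm{T}(\bm{\tau};A^k;\bb^{(k)}(\ba);\e^{(k)}(\ba)))$, a value of the polynomial $P_0\in\mathcal{O}_K[\z;\bt]$ supplied by Proposition \ref{prop:Auxiliary}. This is the crucial point: a priori the individual values $\f(\Omega^k\ba)$, $\g(\Omega^k\ba)$ are transcendental, but the special combination $F$ collapses to $0$, which makes $\xi_k$ algebraic and its size meaningful.

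Next I would estimate the sizes of the two arguments. By condition {\rm(II)} the entries of $\Omega^k$ are $O(\rho^k)$, and since $\Omega$ has nonnegative integer entries each component $\alpha_i^{(k)}$ of $\Omega^k\ba$ is a monomial $\prod_{j=1}^n\alpha_j^{(\Omega^k)_{ij}}$, so $\log\|\alpha_i^{(k)}\|=O(\rho^k)$. For the second argument, the entries of $A^k$ satisfy $\log\|a_{ij}^{(k)}\|=O(k)$; evaluating the fixed rational functions $b_i,e_h$ at $\Omega^j\ba$, and controlling the (nonvanishing) denominators by $\|\alpha^{-1}\|\leq\|\alpha\|^{2[\Q(\alpha):\Q]}$, gives $\log\|b_i(\Omega^j\ba)\|,\log\|e_h(\Omega^j\ba)\|=O(\rho^j)$. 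Summing as in \eqref{eq:bk} and multiplying as in \eqref{eq:ek} over $0\leq j<k$, the submultiplicativity of $\|\cdot\|$ and $\rho>1$ yield $\log\|b_i^{(k)}(\ba)\|,\log\|e_h^{(k)}(\ba)\|=O(\rho^k)$. As each $T_{\bl\m}$ is a polynomial of degree bounded in terms of $L$ alone (hence independent of $N$) with bounded integer coefficients in these quantities, I obtain $\log\|T_{\bl\m}(\bm{\tau};A^k;\bb^{(k)}(\ba);\e^{(k)}(\ba))\|=O(\rho^k)$ with a constant independent of $N$.

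Finally I would bound $\|\xi_k\|$. Writing $P_0=\sum C_{\bm{\eta}\bm{\delta}}\z^{\bm{\eta}}\bt^{\bm{\delta}}$ with $C_{\bm{\eta}\bm{\delta}}\in\mathcal{O}_K$ of degree at most $N$ in each of the $n+s$ variables, subadditivity of the house over the at most $(N+1)^{n+s}$ monomials together with $|\bm{\eta}|+|\bm{\delta}|\leq(n+s)N$ gives $\log\house{\xi_k}\leq(n+s)\log(N+1)+\Lambda(N)+c(n+s)N\rho^k$, where $\Lambda(N)$ is a $k$-independent bound for the log-houses of the coefficients $C_{\bm{\eta}\bm{\delta}}$. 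For the denominator I would choose a single positive rational integer $D_k$ clearing $\alpha_1^{(k)},\dots,\alpha_n^{(k)}$ and all the $T_{\bl\m}$ simultaneously, so that $\log D_k=O(\rho^k)$; then $D_k^{(n+s)N}\xi_k$ is an algebraic integer, whence $\log\den(\xi_k)\leq(n+s)N\log D_k=O(N\rho^k)$. Taking $c_4(N)$ so large that for $k>c_4(N)$ the $k$-independent terms $(n+s)\log(N+1)+\Lambda(N)$ are dominated by $N\rho^k$, both bounds combine into $\log\|\xi_k\|\leq c_5N\rho^k$ with $c_5$ independent of $N$ and $k$.

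The main obstacle is twofold. The first is recognizing that the apparently transcendental value $\xi_k$ collapses to the algebraic number $P_0(\Omega^k\ba;\bm{T}(\cdots))$ through the vanishing $F(\ba;\bm{\tau})=0$; without this the assertion has no content. The second is the denominator estimate: a naive per-monomial bound would produce an exponent of order $N^{n+s+1}$, so it is essential to clear all arguments by one common denominator $D_k$, which is precisely what keeps the final exponent linear in $N$. The house estimate, by contrast, is routine once the bounds $O(\rho^k)$ for the arguments are in hand.
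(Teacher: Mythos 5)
Your proposal is correct and follows essentially the same route as the paper: reduce $E$ to $P_0$ via the vanishing \eqref{eq:vanish}, bound $\|a_{ij}^{(k)}\|\leq c^{k}$ and $\|b_i^{(k)}(\ba)\|,\|e_h^{(k)}(\ba)\|\leq c^{\rho^k}$ (hence $\|T_{\bl\m}(\bm{\tau};A^k;\bb^{(k)}(\ba);\e^{(k)}(\ba))\|\leq c^{\rho^k}$), and then use that $P_0\in\mathcal{O}_K[\z;\bt]$ has degree at most $N$ in each variable. The only difference is that you spell out the final polynomial-evaluation estimate (term-wise house bound plus one common denominator raised to the total degree), which the paper compresses into the single line giving $c_{12}(N)c_{13}^{N\rho^k}$; your observation that the common-denominator argument is what keeps the exponent linear in $N$ is exactly the point hidden in that step.
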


\begin{proof}
From \eqref{eq:vanish} we have
$$
E(\Omega^k\ba;\bm{T}(\bm{\tau};A^k;\bb^{(k)}(\ba);\e^{(k)}(\ba)))=P_0(\Omega^k\ba;\bm{T}(\bm{\tau};A^k;\bb^{(k)}(\ba);\e^{(k)}(\ba))).
$$
Letting $A^k=:(a_{ij}^{(k)})$, we have $\|a_{ij}^{(k)}\|\leq c_6^k$ for $1\leq i,j\leq l$. By the condition (II) we see that $\|b_i(\Omega^k\ba)\|\leq c_{7}^{\rho^k}$ for $1\leq i\leq l$ and that $\|e_h(\Omega^k\ba)\|\leq c_{8}^{\rho^k}$ for $1\leq h\leq m$. Hence we have
$$
\|b_i^{(k)}(\ba)\|\leq kl\prod_{j=0}^{k-1}(c_6^jc_{7}^{\rho^j})^l\leq c_{9}^{\rho^k}\quad(1\leq i\leq l)
$$
and
$$
\|e_h^{(k)}(\ba)\|\leq\prod_{j=0}^{k-1}c_{8}^{\rho^j}\leq c_{10}^{\rho^k}\quad(1\leq h\leq m)
$$
by \eqref{eq:bk} and \eqref{eq:ek}, respectively. Therefore
$$
\|T_{\bl\m}(\bm{\tau};A^k;\bb^{(k)}(\ba);\e^{(k)}(\ba))\|\leq c_{11}^{\rho^k}
$$
for $\bl\in\La$ and $\m\in\M$. Since the degree of each variable of $P_0(\z;\bt)\in \mathcal{O}_K[\z;\bt]$ is at most $N$, we obtain
$$\|P_0(\Omega^k\ba;\bm{T}(\bm{\tau};A^k;\bb^{(k)}(\ba);\e^{(k)}(\ba)))\|\leq c_{12}(N)c_{13}^{N\rho^k}.$$
This implies the proposition.
\end{proof}

\noindent {\it Completion of the proof of Theorem \ref{thm:indep_values}.}
By the condition (i) of Proposition \ref{prop:Auxiliary} together with Proposition \ref{prop:nonzero}, there exists a positive integer $k$ greater than both $c_2(N)$ and $c_4(N)$ such that
$$
E(\Omega^k\ba;\bm{T}(\bm{\tau};A^k;\bb^{(k)}(\ba);\e^{(k)}(\ba)))=P_0(\Omega^k\ba;\bm{T}(\bm{\tau};A^k;\bb^{(k)}(\ba);\e^{(k)}(\ba)))\neq0.
$$
Therefore, by Propositions \ref{prop:UB}, \ref{prop:LB}, and the fundamental inequality \eqref{eq:Liouville}, we have
$$
-c_3(N+1)^{1+1/n}\rho^k\geq-2[K:\Q]c_5N\rho^k.$$
Hence
$$c_3(N+1)^{1+1/n}\leq 2[K:\Q]c_5N,$$
which is a contradiction if $N$ is large.
\end{proof}

\section{Proof of Theorem \ref{thm:main2}}\label{sec:4}
In this section, we prove Theorem \ref{thm:main2} by using Theorem \ref{thm:criterion} and Tanaka's results. Let $\{R_k\}_{k\geq0}$ be a linear recurrence of nonnegative integers satisfying \eqref{eq:LRS} and let
\begin{equation}\label{eq:Omega}
\Omega:=\left(
\begin{array}{ccccc}
c_1&1&0&\cdots&0\\
c_2&0&1&\ddots&\vdots\\
\vdots&\vdots&\ddots&\ddots&0\\
\vdots&\vdots&&\ddots&1\\
c_n&0&\cdots&\cdots&0
\end{array}
\right).
\end{equation}
We define a monomial
\begin{equation}\label{eq:monomial}
M(\z):=z_1^{R_{n-1}}\cdots z_n^{R_0},
\end{equation}
which is denoted similarly to \eqref{eq:Omegaz} by
\begin{equation}\label{eq:monomial2}
M(\z)=(R_{n-1},\ldots,R_0)\z.
\end{equation}
It follows from \eqref{eq:LRS}, \eqref{eq:Omegaz}, and \eqref{eq:monomial2} that
$$
M(\Omega^k\z)=z_1^{R_{k+n-1}}\cdots z_n^{R_k}\quad(k\geq0).
$$

\begin{lem}\label{lem:conditions}
Let $p$ be $\infty$ or a prime number. Suppose that $\{R_k\}_{k\geq0}$ satisfies the condition {\rm (R)}$_p$ stated in Section \ref{sec:1}. Then, if $\alpha$ is an algebraic number with $0<|\alpha|_p<1$, then the matrix $\Omega$ defined by \eqref{eq:Omega} and the point $\ba=(\underbrace{1,\ldots,1}_{n-1},\alpha)$ satisfy the conditions {\rm(I)}, {\rm(II)}, {\rm(III)}$_p$, and {\rm(IV)}$_p$ stated in Section \ref{sec:3.1}.
\end{lem}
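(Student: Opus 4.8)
The plan is to record two structural facts about $\Omega$ and $\ba$ and then split into the cases $p$ prime and $p=\infty$. First I would identify the characteristic polynomial of $\Omega$ with $\Phi(X)$: since column $j$ of $\Omega$ equals $\bm{e}_{j-1}$ for $2\le j\le n$, one has $\Omega\bm{e}_n=\bm{e}_{n-1}$ and more generally $\Omega^k\bm{e}_n=\bm{e}_{n-k}$ for $0\le k\le n-1$, while $\Omega^n\bm{e}_n=\Omega\bm{e}_1={}^t(c_1,\dots,c_n)=\sum_{i=1}^n c_i\Omega^{n-i}\bm{e}_n$. Hence $\bm{e}_n$ is a cyclic vector and the minimal (= characteristic) polynomial of $\Omega$ is $\Phi(X)$, so the eigenvalues of $\Omega$ are exactly the roots of $\Phi$. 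Second, since $\ba=(1,\dots,1,\alpha)$ and the multiplicative action gives $(\Omega^k\ba)_i=\prod_j\alpha_j^{(\Omega^k)_{ij}}$, only the last coordinate of $\ba$ differs from $1$, so
$$\alpha_i^{(k)}=\alpha^{(\Omega^k)_{in}}\qquad(1\le i\le n),$$
which reduces (III)$_p$ and the multiplicative relations behind (IV)$_p$ to the growth and linear (in)dependence of the last column of $\Omega^k$. I would also note that the digraph of $\Omega$ contains the cycle $1\to 2\to\cdots\to n\to 1$ (because $c_n\neq 0$), so $\Omega$ is an irreducible nonnegative matrix.

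For $p$ a prime the argument is short: condition (R)$_p$ is precisely the hypothesis of Lemma \ref{lem:Mahler} for $\Omega$ (irreducible characteristic polynomial with a dominant real root $\rho=\rho_1$). Writing $A_{i1}$ for the cofactors of $\Omega-\rho E$, the lemma guarantees $A_{n1}\neq 0$, and since $\log|\alpha_i|_p=0$ for $1\le i\le n-1$ while $\log|\alpha_n|_p=\log|\alpha|_p<0$, the required inequality collapses to
$$\sum_{i=1}^n|A_{i1}|_\infty\log|\alpha_i|_p=|A_{n1}|_\infty\log|\alpha|_p<0.$$
Thus Lemma \ref{lem:Mahler} delivers all of (I), (II), (III)$_p$, and (IV)$_p$ simultaneously.

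The real content is the case $p=\infty$, where I verify the four conditions directly. For (I): $\det\Omega=\pm c_n\neq 0$, so $\Omega$ is nonsingular; if some root of $\Phi$ were a root of unity it could not equal $\pm1$ (as $\Phi(\pm1)\neq 0$), hence would be a nonreal $\zeta$, and then $\bar\zeta=\zeta^{-1}$ is a distinct root with ratio $\zeta/\bar\zeta=\zeta^2$ a root of unity, contradicting (R)$_\infty$; finally $\rho>1$, since $\rho\le 1$ would force every root to have modulus $1$ (the product of the roots has modulus $c_n\ge1$), whence Kronecker's theorem would make every root a root of unity, already excluded. Next I show $\Omega$ is primitive: an imprimitive irreducible matrix of period $h>1$ has distinct peripheral eigenvalues $\rho,\rho\omega,\dots,\rho\omega^{h-1}$ with $\omega$ a primitive $h$-th root of unity, again yielding two distinct roots of $\Phi$ with root-of-unity ratio, contradicting (R)$_\infty$. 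With primitivity in hand, Perron--Frobenius theory gives (II) (the unique peripheral eigenvalue $\rho$ is simple, so every entry of $\Omega^k$ is $O(\rho^k)$) and (III)$_\infty$, since $(\Omega^k)_{in}/\rho^k\to u_iv_n>0$ for the positive Perron eigenvectors, whence $(\Omega^k)_{in}\ge c'\rho^k$ and $\log|\alpha_i^{(k)}|_\infty=(\Omega^k)_{in}\log|\alpha|_\infty\le-c\rho^k$.

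The main obstacle is (IV)$_\infty$, which I would obtain through Masser's vanishing theorem (Lemma \ref{lem:Masser}). Since $\Omega^k\ba\to\bm{0}$ and (I) holds, the negation of (IV)$_\infty$ is equivalent to the existence of integers $i_1,\dots,i_n$, not all zero, and positive integers $a,b$ with $\prod_j(\alpha_j^{(k)})^{i_j}=1$ for all $k=a+bl$; by the displayed formula for $\alpha_j^{(k)}$ and $|\alpha|\neq1$ this reads $S_k:=\sum_j i_j(\Omega^k)_{jn}=0$ for all such $k$. The sequence $\{S_k\}$ satisfies the recurrence with characteristic polynomial $\Phi$, so $S_k=\sum_t P_t(k)\rho_t^k$; restricting to the progression and using that the bases $\rho_t^b$ are pairwise distinct (because $(\rho_s/\rho_t)^b=1$ would force $\rho_s/\rho_t$ to be a root of unity), Lemma \ref{lem:slm} forces every $P_t\equiv0$, hence $S_k=0$ for all $k\ge0$. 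Finally, because $\bm{e}_n$ is cyclic with $\Omega^k\bm{e}_n=\bm{e}_{n-k}$ for $0\le k\le n-1$, the relations $S_0=\cdots=S_{n-1}=0$ give $i_1=\cdots=i_n=0$, a contradiction. This closes (IV)$_\infty$ and completes the lemma. The two points requiring genuine care are this passage from vanishing on an arithmetic progression to vanishing everywhere, and the positivity of the Perron lower bound underpinning (III)$_\infty$.
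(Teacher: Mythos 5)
Your proposal is correct, but it is worth noting how it relates to the paper: the paper's entire proof of this lemma is a citation --- the case $p=\infty$ is attributed to Tanaka \cite{T1996}, and the prime case is dispatched with the remark that it follows from Lemma \ref{lem:Mahler}. Your prime-case argument is exactly the intended one (identify the characteristic polynomial of $\Omega$ with $\Phi(X)$ via the cyclic vector $\bm{e}_n$, observe that only the last coordinate of $\ba$ has nonzero $\log|\cdot|_p$, and invoke the cofactor inequality of Lemma \ref{lem:Mahler}); the only detail you gloss over is that Lemma \ref{lem:Mahler} requires $\rho_1>1$, which (R)$_p$ does not state explicitly but which follows in one line from $\rho_1^n>\prod_{i=1}^n|\rho_i|_\infty=c_n\geq1$ and $n\geq2$. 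For $p=\infty$, what you have written is a self-contained reconstruction of the result the paper imports from Tanaka: Perron--Frobenius theory (irreducibility of $\Omega$ from $c_n\neq0$, primitivity from the no-root-of-unity-ratio hypothesis) yields (I), (II), and (III)$_\infty$, and then Masser's theorem combined with Lemma \ref{lem:slm} upgrades the vanishing of $S_k=\sum_j i_j(\Omega^k)_{jn}$ on an arithmetic progression to vanishing for all $k$, whereupon the relations $S_0=\cdots=S_{n-1}=0$ force $i_1=\cdots=i_n=0$ and close (IV)$_\infty$. The two delicate points you flagged are indeed the right ones, and both are handled correctly --- in particular the passage from the progression to all $k$ uses precisely that the $\rho_t^b$ are pairwise distinct, which is where (R)$_\infty$ enters, and your formulation with polynomial coefficients $P_t(k)$ correctly accommodates possible repeated roots of $\Phi$. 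The citation buys the paper brevity; your argument buys self-containedness at the cost of roughly a page, and along the way makes visible exactly which parts of (R)$_\infty$ are used where.
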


In the case where $p$ is $\infty$, Lemma \ref{lem:conditions} was proved by Tanaka \cite{T1996}, and in the case where $p$ is a prime number, it can be proved by using Lemma \ref{lem:Mahler} (cf. Mahler \cite{Mahler}). The following lemmas are central to the proof of Theorem \ref{thm:main2}.

\begin{lem}[A special case of Theorem 1 of Tanaka \cite{T1999}]\label{lem:R}
Let $\Cbar$ be an algebraically closed field of characteristic $0$. Suppose that $\{R_k\}_{k\geq0}$ satisfies the condition {\rm (R)}$_\infty$ stated in Section \ref{sec:1}. Assume that $R(\z)\in\Cbar[[z_1,\ldots,z_n]]$ satisfies the functional equation of the form
$$R(\z)=\alpha R(\Omega\z)+Q(M(\z)),$$
where $\alpha\neq0$ is an element of $\Cbar$, $\Omega$ is defined by \eqref{eq:Omega}, $M(\z)$ is defined by \eqref{eq:monomial}, and $Q(X)\in\Cbar(X)$ is defined at $X=0$. Then, if $R(\z)\in\Cbar(z_1,\ldots,z_n)$, then $R(\z)\in\Cbar$ and $Q(X)\in\Cbar$.
\end{lem}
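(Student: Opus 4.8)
The plan is to first convert the functional equation into an explicit power series for $R(\z)$ and then argue that rationality forces this series to collapse to a constant. Iterating $R(\z)=\alpha R(\Omega\z)+Q(M(\z))$ gives, for every $k\geq0$,
\[
R(\z)=\alpha^{k}R(\Omega^{k}\z)+\sum_{j=0}^{k-1}\alpha^{j}Q(M(\Omega^{j}\z)).
\]
Writing $Q(X)=Q(0)+\widetilde{Q}(X)$ with $\widetilde{Q}(0)=0$, the constant terms yield $R(\bm 0)(1-\alpha)=Q(0)$ (and $Q(0)=0$ if $\alpha=1$), while the nonconstant parts assemble into a genuine element of $\Cbar[[z_1,\ldots,z_n]]$ because $M(\Omega^{j}\z)=z_1^{R_{j+n-1}}\cdots z_n^{R_j}$ has total degree $R_{j}+\cdots+R_{j+n-1}\to\infty$ under (R)$_\infty$ (Remark \ref{rem:Rk}). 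Thus $R(\z)=R(\bm 0)+\sum_{j\geq0}\alpha^{j}\widetilde{Q}(M(\Omega^{j}\z))$, and the whole lemma reduces to the assertion: if this series is rational, then $\widetilde{Q}=0$.

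Next I would analyze the polar divisor of $R$ through the functional equation itself. Rewriting it as $R(\Omega\z)=\alpha^{-1}(R(\z)-Q(M(\z)))$ shows that every pole of $R(\Omega\z)$ lies on a pole of $R$ or on one of the toric hypersurfaces $\{M(\z)=\beta_i\}$, where the $\beta_i$ are the finitely many nonzero poles of $Q$. Since $\Omega$ is a monomial map whose action on exponents is $\mathbf m\mapsto\Omega^{\mathsf T}\mathbf m$, pulling back irreducible hypersurfaces is controlled, and the finiteness of the polar divisor $D_R$ together with the relation $\Omega^{-1}D_R\subseteq D_R\cup\{M(\z)=\beta_i\}$ forces $D_R$ to be assembled from coordinate hyperplanes and from the orbit divisors $\{M(\Omega^{j}\z)=\beta_i\}$. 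As $R\in\Cbar[[z_1,\ldots,z_n]]$ converges at the origin, no coordinate hyperplane through $\bm 0$ can be a pole; comparing orders of poles along the surviving toric divisors via the functional equation then forces $Q$ to have no poles at all, i.e.\ $Q$ to be a polynomial. In the remaining polynomial case the exponent vectors $i\,(\Omega^{\mathsf T})^{j}\mathbf M$ $(1\le i\le\deg Q,\ j\ge0)$ occurring in the series are, for large $j$, pairwise distinct, which is exactly where (R)$_\infty$ enters: the ratios of roots of $\Phi$ not being roots of unity prevents periodic collisions among these exponent vectors, while $\Phi(\pm1)\neq0$ and non-geometricity exclude the degenerate configurations. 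Hence $R$ would have infinitely many nonzero Taylor coefficients and could not be a polynomial unless $\widetilde{Q}=0$, whence $Q\in\Cbar$ and $R\in\Cbar$.

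The step I expect to be the main obstacle is the divisorial dynamics in the middle paragraph: precisely controlling how irreducible hypersurfaces pull back and move under the infinite-order monomial map $\Omega$, and matching the orders of poles along the infinitely many orbit divisors $\{M(\Omega^{j}\z)=\beta_i\}$. This is genuinely where condition (R)$_\infty$ must be used in full—the eigenvalue $\rho>1$ giving $\Omega$ infinite order, no ratio of eigenvalues being a root of unity preventing periodicity, and the non-geometric and $\Phi(\pm1)\neq0$ clauses removing the exceptional cases—and it is exactly the analysis carried out in Theorem 1 of Tanaka \cite{T1999}. Accordingly I would either reproduce that divisor analysis or, as the statement already signals, simply specialize Tanaka's theorem to the present scalar equation with inhomogeneous term $Q(M(\z))$.
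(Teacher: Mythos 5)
Your proposal ultimately rests on the same step as the paper: the paper gives no independent proof of this lemma but simply invokes it as a special case of Theorem 1 of Tanaka \cite{T1999}, which is exactly what your final paragraph does. Your preliminary reduction (iterating the functional equation, splitting off $Q(0)$, and noting that formal convergence follows from the degrees $R_j+\cdots+R_{j+n-1}\to\infty$) and the sketched polar-divisor dynamics are a plausible outline of what Tanaka's argument must accomplish, but since you explicitly defer that hard step to the citation, your route and the paper's coincide.
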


\begin{lem}[A special case of Theorem 2 of Tanaka \cite{T1999}]\label{lem:S}
Let $\Cbar$ be an algebraically closed field of characteristic $0$. Suppose that $\{R_k\}_{k\geq0}$ satisfies the condition {\rm (R)}$_\infty$ stated in Section \ref{sec:1}. Assume that $S(\z)$ is a nonzero element of the quotient field of $\Cbar[[z_1,\ldots,z_n]]$ satisfying the functional equation of the form
$$S(\z)=Q(M(\z))S(\Omega\z),$$
where $\Omega$ is defined by \eqref{eq:Omega}, $M(\z)$ is defined by \eqref{eq:monomial}, and $Q(X)\in\Cbar(X)$ is defined and nonzero at $X=0$. Then, if $S(\z)\in\Cbar(z_1,\ldots,z_n)^\times$, then $S(\z)\in\Cbar^\times$ and $Q(X)=1$.
\end{lem}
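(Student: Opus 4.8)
The plan is to reduce this multiplicative functional equation to the additive one already treated in Lemma \ref{lem:R} by means of logarithmic differentiation. The key point is that the Euler operators $D_i:=z_i\,\partial/\partial z_i$ interact cleanly with the monomial transformation $\z\mapsto\Omega\z$: on a monomial $\z^{\bm{e}}$ they act diagonally, $D_i\z^{\bm{e}}=e_i\z^{\bm{e}}$, and a direct computation gives $D_i\bigl(\log S(\Omega\z)\bigr)=\sum_r\omega_{ri}\,G_r(\Omega\z)$, where $G_r:=D_r\log S=z_r(\partial S/\partial z_r)/S$. Since we are in the case $S\in\Cbar(z_1,\ldots,z_n)^\times$, every $G_r$ is again a \emph{rational} function, which is exactly what will allow an appeal to Lemma \ref{lem:R}.

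Carrying this out, write $M(\z)=\z^{\bm{v}}$ with $\bm{v}=(R_{n-1},\ldots,R_0)$ and set $\widetilde{Q}(X):=XQ'(X)/Q(X)$. Applying $D_i$ to $\log S(\z)=\log Q(M(\z))+\log S(\Omega\z)$ and using $D_iM(\z)=v_iM(\z)$ yields the vector equation
$$\bm{G}(\z)=\bm{v}\,\widetilde{Q}(M(\z))+{}^t\Omega\,\bm{G}(\Omega\z),\qquad \bm{G}:={}^t(G_1,\ldots,G_n).$$
Condition (R)$_\infty$ forces the roots $\rho_1,\ldots,\rho_n$ of $\Phi(X)$, i.e.\ the eigenvalues of $\Omega$ (hence of ${}^t\Omega$), to be distinct, so I may diagonalize ${}^t\Omega=PDP^{-1}$ with $D=\diag(\rho_1,\ldots,\rho_n)$. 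Putting $\bm{H}:=P^{-1}\bm{G}$ and $\bm{u}:=P^{-1}\bm{v}$ decouples the system into the scalar equations
$$H_j(\z)=u_j\,\widetilde{Q}(M(\z))+\rho_j\,H_j(\Omega\z)\qquad(1\leq j\leq n),$$
each of which is precisely of the form treated in Lemma \ref{lem:R}, with $\alpha=\rho_j\neq0$ (note $\Phi(0)=-c_n\neq0$) and inhomogeneous term $u_j\widetilde{Q}(X)\in\Cbar(X)$, which is defined at $X=0$ because $\widetilde{Q}(0)=0$.

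Lemma \ref{lem:R} then gives $H_j\in\Cbar$ and $u_j\widetilde{Q}(X)\in\Cbar$ for every $j$. Since $\bm{v}\neq\bm{0}$ we have $\bm{u}\neq\bm{0}$, so $u_{j_0}\neq0$ for some $j_0$; hence $\widetilde{Q}(X)$ is constant, and being $0$ at $X=0$ it vanishes identically, so $Q'\equiv0$ and $Q$ is a constant $d\in\Cbar^\times$. The scalar equations now read $H_j=\rho_jH_j$, and $\Phi(1)\neq0$ gives $\rho_j\neq1$, so every $H_j=0$; therefore $\bm{G}=P\bm{H}=\bm{0}$, i.e.\ $\partial S/\partial z_i\equiv0$ for all $i$ and $S\in\Cbar^\times$. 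Substituting back into $S(\z)=d\,S(\Omega\z)$ yields $d=1$, that is $Q(X)=1$, which is the assertion.

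The main obstacle is the regularity required to invoke Lemma \ref{lem:R}, whose solutions must lie in $\Cbar[[z_1,\ldots,z_n]]$: the $G_i$ are power series at the origin exactly when $S$ has neither a zero nor a pole through $\bm{0}$, i.e.\ when $S$ is a unit of $\Cbar[[\z]]$, and this is not automatic for a general rational $S$. The term $\widetilde{Q}(M(\z))$ causes no trouble, since $M(\bm{0})=0$ while the zeros and poles of $Q$ lie away from $0$, so the whole difficulty is concentrated in the divisor of $S$ meeting the contracting fixed point $\bm{0}=\Omega\bm{0}$. I would dispose of it by a divisor analysis: near $\bm{0}$ the functional equation forces the divisor of $S$ to be invariant under $\z\mapsto\Omega\z$, and the hypothesis that no eigenvalue of $\Omega$ is a root of unity should rule out any component through $\bm{0}$. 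As a first step, evaluating the weighted order of $S$ along a positive (Perron) eigenvector $\bm{w}$ of the nonnegative matrix $\Omega$ gives $\mathrm{ord}_{\bm{w}}(S)=0$; upgrading this to genuine regularity of $S$ at the origin is the crux, to be handled as in Tanaka's proof of Theorem 2 of \cite{T1999}.
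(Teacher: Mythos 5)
The paper offers no proof of Lemma \ref{lem:S}: it is invoked as a special case of Theorem 2 of Tanaka \cite{T1999}, so your argument has to stand entirely on its own --- and it does not. Your formal reduction is correct: with $G_r:=z_r(\partial S/\partial z_r)/S$ one does get $\bm{G}(\z)=\bm{v}\,\widetilde{Q}(M(\z))+{}^t\Omega\,\bm{G}(\Omega\z)$, and the endgame after Lemma \ref{lem:R} is sound. The fatal point is the one you flag yourself in your last paragraph: Lemma \ref{lem:R} applies only to solutions lying in $\Cbar[[z_1,\ldots,z_n]]$, and the rational functions $G_r$ are power series at the origin only if no non-monomial irreducible factor of the numerator or denominator of $S$ passes through $\bm{0}$ (monomial factors are harmless, contributing constants to $G_r$, but a factor such as $z_1+z_2$ makes $G_r$ genuinely singular at $\bm{0}$). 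Excluding such factors is not a routine ``divisor analysis''; it is precisely the hard kernel of the Kubota--Nishioka--Tanaka multiplicative theory, and your plan to settle it ``as in Tanaka's proof of Theorem 2 of \cite{T1999}'' defers the crux to the very statement being proved. Your Perron-weight computation ($\mathrm{ord}_{\bm{w}}S=0$) is correct, but it only says that the vanishing part and the polar part of $S$ at the origin have equal weighted order; it does not rule either of them out. As written, the proposal is therefore a reduction of Lemma \ref{lem:S} to its own essential difficulty, not a proof.

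There is also an outright false step, though a repairable one: condition (R)$_\infty$ does not force the roots of $\Phi(X)$ to be distinct. Take $\Phi(X)=X^3-12X-16=(X+2)^2(X-4)$, i.e.\ $c_1=0$, $c_2=12$, $c_3=16$, with initial values $R_0=1$, $R_1=2$, $R_2=1$: then $\Phi(1)=-27\neq0$, $\Phi(-1)=-5\neq0$, the only ratios of distinct roots are $-2$ and $-1/2$ (not roots of unity), and $R_0R_2\neq R_1^2$, so (R)$_\infty$ holds; yet $-2$ is a double root, and since $\Omega$ is a companion-type (hence non-derogatory) matrix, it is then not diagonalizable. The repair is standard: put ${}^t\Omega$ into Jordan form and induct upward through each Jordan block --- the bottom equation of a block is exactly of the shape treated in Lemma \ref{lem:R}, and once its solution is known to be a constant, the next equation up has inhomogeneous term $u_j\widetilde{Q}(M(\z))+\mathrm{const}$, again of that shape; since $\bm{u}=P^{-1}\bm{v}\neq\bm{0}$, some row still forces $\widetilde{Q}$ to be constant. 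This fixes the linear algebra, but it leaves the regularity gap of the previous paragraph untouched, and that gap is where the whole content of the lemma lies.
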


\begin{proof}[Proof of Theorem \ref{thm:main2}]
Assume on the contrary that there exist distinct $\alpha_1,\ldots,\alpha_r\in\Qbar^\times$, distinct $\beta_1,\ldots,\beta_s\in\Qbar^\times\setminus\{a^{-R_k}\}_{k\geq0}$, and nonnegative integers $L,M$ such that the values
\begin{align}\label{eq:values}
&\frac{\partial^{l+m}H}{\partial x^l\partial y^m}(\alpha_i,\beta_j)\quad(1\leq i\leq r,\ 1\leq j\leq s,\ 0\leq l\leq L,\ 0\leq m\leq M),\nonumber\\
F_m^{(l)}(\alpha_i)&\quad(1\leq i\leq r,\ 0\leq l\leq L,\ 0\leq m\leq M),\quad {\rm and} \quad\, G(\beta_j)\quad(1\leq j\leq s)
\end{align}
are algebraically dependent over $\Q$. Shifting the linear recurrence $\{R_k\}_{k\geq0}$, we may suppose that $R_k$ $(k\geq0)$ are sufficiently large. Let $z_1,\ldots,z_n$ be variables and let $\z:=(z_1,\ldots,z_n)$. Define
\begin{align*}
h_{jm}(x;\z):=\sum_{k=0}^\infty x^k\left(\frac{M(\Omega^k\z)}{1-\beta_jM(\Omega^k\z)}\right)^{m+1}\quad &(1\leq j\leq s,\ 0\leq m\leq M),\\
f_m(x;\z):=\sum_{k=0}^\infty x^kM(\Omega^k\z)^{m+1}\quad&(0\leq m\leq M),
\end{align*}
and
$$g_j(\z):=\prod_{k=0}^\infty \left(1-\beta_jM(\Omega^k\z)\right)\quad(1\leq j\leq s),$$
where $\Omega$ and $M(\z)$ are defined by \eqref{eq:Omega} and \eqref{eq:monomial}, respectively. Furthermore, define
$$h_{ijlm}(\z):=\frac{\partial^lh_{jm}}{\partial x^l}(\alpha_i;\z)\quad(1\leq i\leq r,\ 1\leq j\leq s,\ 0\leq l\leq L,\ 0\leq m\leq M)$$
and
$$f_{ilm}(\z):=\frac{\partial^lf_m}{\partial x^l}(\alpha_i;\z)\quad(1\leq i\leq r,\ 0\leq l\leq L,\ 0\leq m\leq M).$$
Since $M(\Omega^k\z)=z_1^{R_{k+n-1}}\cdots z_n^{R_k}$, letting
\begin{equation}\label{eq:gamma}
\bg:=(\underbrace{1,\ldots,1}_{n-1},a),
\end{equation}
we see that
\begin{align*}
h_{ijlm}(\bg)=\frac{1}{m!}\frac{\partial^{l+m}H}{\partial x^l\partial y^m}(\alpha_i,\beta_j)\quad&(1\leq i\leq r,\ 1\leq j\leq s,\ 0\leq l\leq L,\ 0\leq m\leq M),\\
f_{ilm}(\bg)=\frac{1}{m!}F_m^{(l)}(\alpha_i)\quad&(1\leq i\leq r,\ 0\leq l\leq L,\ 0\leq m\leq M),
\end{align*}
and
$$g_j(\bg)=G(\beta_j)\quad(1\leq j\leq s).$$
Since the values \eqref{eq:values} are algebraically dependent over $\Q$, so are the values
\begin{gather*}
h_{ijlm}(\bg)\quad(1\leq i\leq r,\ 1\leq j\leq s,\ 0\leq l\leq L,\ 0\leq m\leq M),\\
f_{ilm}(\bg)\quad(1\leq i\leq r,\ 0\leq l\leq L,\ 0\leq m\leq M),\qquad {\rm and} \qquad g_j(\bg)\quad(1\leq j\leq s).
\end{gather*}
Here we see that
$$h_{jm}(x;\z)=xh_{jm}(x;\Omega\z)+\left(\frac{M(\z)}{1-\beta_jM(\z)}\right)^{m+1}\quad(1\leq j\leq s,\ 0\leq m\leq M),$$
and hence
\begin{align*}
\frac{\partial^l h_{jm}}{\partial x^l}(x;\z)=x\frac{\partial^l h_{jm}}{\partial x^l}(x;\Omega\z)+l\frac{\partial^{l-1} h_{jm}}{\partial x^{l-1}}(x;\Omega\z)\\
(1\leq j\leq s,\ 1\leq l\leq L,\ 0\leq m\leq M).
\end{align*}
Thus we see that $h_{ijlm}(\z)$ $(1\leq i\leq r,\ 1\leq j\leq s,\ 0\leq l\leq L,\ 0\leq m\leq M)$ satisfy the functional equations
\begin{align*}
\left(
\begin{array}{c}
h_{ij0m}(\z) \\
h_{ij1m}(\z) \\
\vdots \\
h_{ijLm}(\z)
\end{array}
\right)=A_i\left(
\begin{array}{c}
h_{ij0m}(\Omega\z) \\
h_{ij1m}(\Omega\z) \\
\vdots \\
h_{ijLm}(\Omega\z)
\end{array}
\right)+\left(
\begin{array}{c}
(M(\z)/(1-\beta_jM(\z)))^{m+1}\\
0 \\
\vdots \\
0
\end{array}
\right)\\
(1\leq i\leq r,\ 1\leq j\leq s,\ 0\leq m\leq M),
\end{align*}
where
$$
A_i:=\left(
\begin{array}{ccccc}
\alpha_i&&&&\\
1&\alpha_i&&&\hsymb{0}\\
&2&\ddots&&\\
&&\ddots&\ddots&\\
\hsymb{0}&&&L&\alpha_i
\end{array}
\right)\quad(1\leq i\leq r).
$$
Similarly, we see that $f_{ilm}(\z)$ $(1\leq i\leq r,\ 0\leq l\leq L,\ 0\leq m\leq M)$ satisfy the functional equations
$$
\left(
\begin{array}{c}
f_{i0m}(\z) \\
f_{i1m}(\z) \\
\vdots \\
f_{iLm}(\z)
\end{array}
\right)=A_i\left(
\begin{array}{c}
f_{i0m}(\Omega\z) \\
f_{i1m}(\Omega\z) \\
\vdots \\
f_{iLm}(\Omega\z)
\end{array}
\right)+\left(
\begin{array}{c}
M(\z)^{m+1} \\
0 \\
\vdots \\
0
\end{array}
\right)\quad(1\leq i\leq r,\ 0\leq m\leq M).
$$
Furthermore, we see that $g_j(\z)$ $(1\leq j\leq s)$ satisfy the functional equations
$$g_j(\z)=(1-\beta_jM(\z))g_j(\Omega\z)\quad(1\leq j\leq s).$$
By Lemma \ref{lem:conditions}, the matrix $\Omega$ and the point $\bg$ defined by \eqref{eq:gamma} satisfy the conditions {\rm(I)}, {\rm(II)}, {\rm(III)}$_p$, and {\rm(IV)}$_p$ stated in Section \ref{sec:3.1}. Therefore, since $\alpha_i$ $(1\leq i\leq r)$ are distinct, by Theorem \ref{thm:criterion} at least one the following two cases arises:
\begin{enumerate}
\item There exist $i\in\{1,\ldots,r\}$, $c_{jm}\in\Qbar$ $(0\leq j\leq s,\ 0\leq m\leq M)$, not all zero, and $R(\z)\in \Qbar[[z_1,\ldots,z_n]]\cap\Qbar(z_1,\ldots,z_n)$ such that
\begin{equation}\label{eq:RR}
R(\z)=\alpha_iR(\Omega\z)+\sum_{m=0}^Mc_{0m}M(\z)^{m+1}+\sum_{j=1}^s\sum_{m=0}^M c_{jm}\left(\frac{M(\z)}{1-\beta_jM(\z)}\right)^{m+1}.
\end{equation}
\item There exist integers $d_1,\ldots,d_s$, not all zero, and $S(\z)\in\Qbar(z_1,\ldots,z_n)^\times$ such that
\begin{equation}\label{eq:SS}
S(\z)=S(\Omega\z)\prod_{j=1}^s(1-\beta_jM(\z))^{d_j}.
\end{equation}
\end{enumerate}
If the functional equation \eqref{eq:RR} is satisfied, then by Lemma \ref{lem:R}
\begin{equation}\label{eq:RRR}
\sum_{m=0}^Mc_{0m}X^{m+1}+\sum_{j=1}^s\sum_{m=0}^M c_{jm}\left(\frac{X}{1-\beta_jX}\right)^{m+1}=\delta\in\Qbar
\end{equation}
holds, where $X$ is a variable. If the functional equation \eqref{eq:SS} is satisfied, then by Lemma \ref{lem:S}
\begin{equation}\label{eq:SSS}
\prod_{j=1}^s(1-\beta_jX)^{d_j}=1
\end{equation}
holds. Taking the logarithmic derivative of \eqref{eq:SSS} and then multiplying both sides by $-X$, we get
$$\sum_{j=1}^s\beta_jd_j\frac{X}{1-\beta_jX}=0,$$
which is a special case of \eqref{eq:RRR} since $\beta_jd_j$ $(1\leq j\leq s)$ are not all zero. It is easily seen that \eqref{eq:RRR} does not hold since $\beta_j$ $(1\leq j\leq s)$ are nonzero distinct numbers and since $c_{jm}$ $(0\leq j\leq s,\ 0\leq m\leq M)$ are not all zero. Therefore neither the case (i) nor (ii) arises, which is a contradiction.
\end{proof}

\section*{Acknowledgements}
The author would like to thank his supervisor, Prof. Taka-aki Tanaka, for his valuable advice and guidance. The author also would like to thank Mr. Yusuke Tanuma for his helpful comments.

Department of Mathematics, Faculty of Science and Technology, Keio University 3-14-1 Hiyoshi, Kohoku-ku, Yokohama, 223-8522, Japan

{\it E-mail address}: {\tt haru1111@keio.jp}
\end{document}